\renewcommand{\uppercasenonmath}[1]{}
\numberwithin{equation}{section} \theoremstyle{plain}
\newtheorem*{thm*}{Main Theorem}
\newtheorem{thm}{Theorem}[section]
\newtheorem{cor}[thm]{Corollary}
\newtheorem*{cor*}{Corollary}
\newtheorem{lem}[thm]{Lemma}
\newtheorem*{lem*}{Lemma}
\newtheorem*{fact*}{Fact}
\newtheorem*{nota*}{Notation}
\newtheorem{prop}[thm]{Proposition}
\newtheorem*{prop*}{Proposition}
\newtheorem{rem}[thm]{Remark}
\newtheorem*{rem*}{Remark}
\newtheorem*{observation*}{Observation}
\newtheorem{exa}[thm]{Example}
\newtheorem*{exa*}{Example}
\newtheorem{df}[thm]{Definition}
\newtheorem*{df*}{Definition}
\newtheorem{con}[thm]{Construction}
\newtheorem*{con*}{Construction}
\renewcommand{\geq}{\geqslant}
\renewcommand{\leq}{\leqslant}
\begin{document}
\begin{center}
{\large  \bf  Cohomology of torsion and completion of $N$-complexes}

\vspace{0.5cm} Xiaoyan Yang\\
%\bigskip
Department of Mathematics, Northwest Normal University, Lanzhou 730070,
China
E-mail: yangxy@nwnu.edu.cn
\end{center}

\bigskip
\centerline { \bf  Abstract}
%\bigskip
\leftskip10truemm \rightskip10truemm \noindent We introduce the notions of Koszul $N$-complex, $\check{\mathrm{C}}$ech $N$-complex and telescope $N$-complex, explicit derived torsion and derived completion functors in the derived category $\mathbf{D}_N(R)$ of $N$-complexes using the $\check{\mathrm{C}}$ech $N$-complex and the telescope $N$-complex. Moreover, we give an equivalence between the category of cohomologically $\mathfrak{a}$-torsion $N$-complexes and the category of cohomologically $\mathfrak{a}$-adic complete $N$-complexes,
and prove that over a commutative noetherian ring, via Koszul cohomology, via RHom cohomology (resp. $\otimes$ cohomology) and via local cohomology (resp. derived completion), all yield the same invariant.\\
\vbox to 0.3cm{}\\
{\it Key Words:} Koszul $N$-complex; telescope $N$-complex; torsion; completion\\
{\it 2010 Mathematics Subject Classification:} 13D07; 13D30; 13B35.

\leftskip0truemm \rightskip0truemm
\bigskip
\section* { \bf Introduction and Preliminaries}
%\bigskip

The notion of $N$-complexes (graded objects with $N$-differentials $d$) was introduced by Mayer \cite{M} in his study of simplicial complexes and
its abstract framework of homological theory was studied by Kapranov \cite{K} and Dubois-Violette \cite{DV}. Since then the homological properties of $N$-complexes have attracted many authors, for example \cite{BHN,E,G12,GH,T,YD,YW}. Iyama, Kato and Miyachi \cite{IKM} studied the homotopy category $\mathbf{K}_N(\mathcal{B})$ of $N$-complexes of an
additive category $\mathcal{B}$ as well as the derived category $\mathbf{D}_N(\mathcal{A})$ of an abelian category $\mathcal{A}$. They proved that both $\mathbf{K}_N(\mathcal{B})$ and $\mathbf{D}_N(\mathcal{A})$ are triangulated, and established a theory of projective
(resp. injective) resolutions and derived functors. They also
showed that the well known equivalences between homotopy category of chain
complexes and their derived categories also generalize to the case of $N$-complexes.

Let $R$ be a commutative ring and $\mathfrak{a}$ an ideal of $R$. Denote by $\mathrm{Mod}R$ the category of $R$-modules. There are two operations associated to this data: the $\mathfrak{a}$-torsion and the $\mathfrak{a}$-adic completion. For an $R$-module $M$, the $\mathfrak{a}$-torsion elements form the $\mathfrak{a}$-torsion submodule $\Gamma_\mathfrak{a}(M)\cong\underrightarrow{\textrm{lim}}_{i>0}\mathrm{Hom}_R(R/\mathfrak{a}^i,M)$ of $M$. The $\mathfrak{a}$-adic completion of $M$ is $\Lambda_\mathfrak{a}(M):=\underleftarrow{\textrm{lim}}_{i>0}(R/\mathfrak{a}^i\otimes_RM)$.
Therefore, we have two additive functors
\begin{center}$\Gamma_\mathfrak{a},\Lambda_\mathfrak{a}:\mathrm{Mod}R\rightarrow\mathrm{Mod}R$.\end{center}
The derived category of $\mathrm{Mod}R$ is denoted by $\mathbf{D}(R)$. Then the derived functors
\begin{center}$\mathrm{R}\Gamma_\mathfrak{a},\mathrm{L}\Lambda_\mathfrak{a}:
\mathbf{D}(R)\rightarrow\mathbf{D}(R)$\end{center}exist. The right derived functor $\mathrm{R}\Gamma_\mathfrak{a}$ has been studied in great length already by
Grothendieck and others in the context of local cohomology. The left derived functors $\mathrm{L}\Lambda_\mathfrak{a}$ was studied by Matlis \cite{M} and Greenlees-May
\cite{GM}.

Let $\mathfrak{a}$ be a weakly proregular ideal of $R$,
this includes the noetherian case, but there are other interesting examples. Porta, Shaul and Yekutieli \cite{PSY} extended earlier work by Alonso-Jeremias-Lipman \cite{AJL}, Schenzel \cite{S} and
Dwyer-Greenlees \cite{DG}. They proved that the derived functors $\mathrm{R}\Gamma_\mathfrak{a}$ and $\mathrm{L}\Lambda_\mathfrak{a}$ can be computed by telescope complexes, and established the MGM equivalence, where the letters ``MGM'' stand for Matlis, Greenlees and May.

The first aim of this paper is to extend works of Porta, Shaul and Yekutieli to the category of $N$-complexes. We introduce the definitions of Koszul $N$-complex, $\check{\mathrm{C}}$ech $N$-complex and telescope $N$-complex, and explicit derived torsion and derived completion functors in $\mathbf{D}_N(R)$ using these $N$-complexes.

\vspace{2mm} \noindent{\bf Theorem A.}\label{Th1.4} {\it{Let $\textbf{x}=x_1,\cdots,x_d$ be a weakly proregular sequence in $R$ and $\mathfrak{a}$ the ideal generated by $\textbf{x}$. For any $N$-complex $X$, there are functorial quasi-isomorphisms \begin{center}$\mathrm{R}\Gamma_{\mathfrak{a}}(X)\stackrel{\simeq}\rightarrow \check{C}(\textbf{x};R)\otimes_RX\cong\mathrm{Tel}(\textbf{x};R)\otimes_RX$,\end{center} \begin{center}$\mathrm{Hom}_R(\mathrm{Tel}(\textbf{x};R),X)\stackrel{\simeq}\rightarrow
\mathrm{L}\Lambda_{\mathfrak{a}}(X)$.\end{center}}}
\vspace{2mm}

 Denote by  $\mathbf{D}_N(R)_{\mathfrak{a}\textrm{-tor}}$ and $\mathbf{D}_N(R)_{\mathfrak{a}\textrm{-com}}$ the full subcategories of $\mathbf{D}_N(R)$ consisting of cohomologically $\mathfrak{a}$-torsion
$N$-complexes and cohomologically $\mathfrak{a}$-adic
complete $N$-complexes, respectively (see Definition \ref{lem:5.1}). We show that

\vspace{2mm} \noindent{\bf Theorem B.}\label{Th1.4} {\it{Let $\mathfrak{a}$ be a weakly proregular ideal of $R$. Then the functors \begin{center}$\mathrm{R}\Gamma_{\mathfrak{a}}:\mathbf{D}_N(R)_{\mathfrak{a}\textrm{-}\mathrm{com}}
\rightleftarrows\mathbf{D}_N(R)_{\mathfrak{a}\textrm{-}\mathrm{tor}}:\mathrm{L}\Lambda_{\mathfrak{a}}$\end{center} form an equivalence.}}
\vspace{2mm}

Let $\mathfrak{a}$ be an ideal in a commutative noetherian ring $R$ and $K$ the Koszul complex on
a finite set of $n$ generators for $\mathfrak{a}$. It is well known that the following numbers are
equal when $M$ is a finitely generated $R$-module:

\quad$\bullet$ $n+\mathrm{inf}\{\ell\in\mathbb{Z}\hspace{0.03cm}|\hspace{0.03cm}\mathrm{H}_\ell(K\otimes_RM)\neq0\}$;

\quad$\bullet$ $\mathrm{inf}\{\ell\in\mathbb{Z}\hspace{0.03cm}|\hspace{0.03cm}\mathrm{Ext}^\ell_R(R/\mathfrak{a},M)\neq0\}$;

\quad$\bullet$ $\mathrm{inf}\{\ell\in\mathbb{Z}\hspace{0.03cm}|\hspace{0.03cm}\mathrm{H}^{\ell}_\mathfrak{a}(M)\cong0\}$, where $\mathrm{H}^{\ast}_\mathfrak{a}(M)$ is the $\mathfrak{a}$-local cohomology of $M$.\\
Each of the
quantities displayed above is meaningful. These have proved to be of immense utility even in dealing with
problems concerning modules alone. Foxby and Iyengar \cite{FI} proved that the numbers obtained from the
three formulas above coincide for any complex. It is natural to ask if these three approaches yield the same invariant for $N$-complexes.
The second aim of
current paper is to answer the question for any $N$-complex and consider its dual statement over commutative noetherian rings.

\section{\bf Preliminaries and basic facts}
We assume throughout this paper that all rings are commutative.

This section is devoted to recalling some notions and basic facts which we need in
the later sections.
For terminology we shall follow \cite{BHN,IKM} and \cite{YD}.

\vspace{2mm}
{\bf $N$-complexes.} Fix an integer $N\geq 2$.
An $N$-complex $X$ is a sequence of $R$-modules \begin{center}$\cdots\stackrel{d^{n-2}}\longrightarrow
X^{n-1}\stackrel{d^{n-1}}\longrightarrow
X^n\stackrel{d^{n}}\longrightarrow
X^{n+1}\stackrel{d^{n+1}}\longrightarrow\cdots$
\end{center} satisfying $d^N=0$. That is, composing any $N$-consecutive morphisms gives $0$. A morphism $f:X\rightarrow Y$
of $N$-complexes is a collection of maps $f^n:X^n\rightarrow Y^n$ making all the rectangles
commute. In this way we get a category of $N$-complexes, denoted by $\mathbf{C}_N(R)$.

For any $R$-module $M$, $j\in\mathbb{ Z}$ and $t=1,\cdots,N$, we define
\begin{center}$D^j_t(M):\cdots0\rightarrow X^{j-t+1}\xrightarrow{d^{j-t+1}}\cdots\xrightarrow{d^{j-2}}X^{j-1}\xrightarrow{d^{j-1}}X^j
\rightarrow0\rightarrow\cdots$\end{center}
be an $N$-complex given by $X^n=M$ for all $j-t
+1\leq n\leq j$ and $d^n=1_M$ for all $j-t
+1\leq n\leq j-1$.

Let $X$ be an $N$-complex.
For $n\in\mathbb{Z}$, we define \begin{center}$\textrm{Z}^n_t(X)=\textrm{Ker}(d^{n+t-1}\cdots d^n)$, $\textrm{B}^n_t(X)=\textrm{Im}(d^{n-1}\cdots d^{n-t})$ for $t=0,\cdots,N$,\end{center}
\begin{center}$\textrm{C}^n_t(X)=\textrm{Coker}(d^{n-1}\cdots d^{n-t})$, $\textrm{H}^n_t(X)=\textrm{Z}^n_t(X)/\textrm{B}^n_{N-t}(X)$ for $t=1,\cdots,N-1$.\end{center}
An $N$-complex $X$ is called
$N$-acyclic if $\textrm{H}^n_t(X)=0$ for all $n$ and $t$.

\begin{prop} \label{prop:0.1}{\rm(\cite{IKM})} {\it{Let $0\rightarrow X\rightarrow Y\rightarrow Z\rightarrow 0$ be a short exact sequence in $\mathbf{C}_N(R)$. For $n\in\mathbb{Z}$ and $1\leq t\leq N-1$, there is a long exact sequence of cohomologies \begin{center}$\cdots \rightarrow\mathrm{H}^{n-(N-t)}_{N-t}(Z)\rightarrow\mathrm{H}^{n}_t(X)\rightarrow\mathrm{H}^{n}_t(Y)
\rightarrow\mathrm{H}^{n}_t(Z)\rightarrow\mathrm{H}^{n+t}_{N-t}(X)\rightarrow\cdots$.\end{center}}}
\end{prop}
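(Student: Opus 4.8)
The plan is to reduce the statement to the ordinary snake lemma by attaching to each $N$-complex $W$ and each pair $(n,t)$ a single homomorphism whose kernel and cokernel are the two cohomologies that must appear next to one another. Abbreviate $d^n_{[s]}:=d^{n+s-1}\cdots d^n\colon W^n\to W^{n+s}$ for the composite of $s$ consecutive differentials, so the $N$-complex condition reads $d^n_{[N]}=0$. With this notation $\mathrm{Z}^n_t(W)=\ker d^n_{[t]}$, $\mathrm{B}^n_{N-t}(W)=\im d^{\,n-(N-t)}_{[N-t]}$, $\mathrm{C}^n_{N-t}(W)=\coker d^{\,n-(N-t)}_{[N-t]}$ and $\mathrm{B}^{n+t}_t(W)=\im d^n_{[t]}$. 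Because $d^n_{[t]}\circ d^{\,n-(N-t)}_{[N-t]}=d^{\,n-(N-t)}_{[N]}=0$ and $d^{\,n+t}_{[N-t]}\circ d^n_{[t]}=d^n_{[N]}=0$, the map $d^n_{[t]}$ both annihilates $\mathrm{B}^n_{N-t}(W)$ and has image inside $\mathrm{Z}^{n+t}_{N-t}(W)$, so it descends to a well-defined homomorphism
\[
\overline{d}^{\,n}_{t}\colon \mathrm{C}^n_{N-t}(W)\longrightarrow \mathrm{Z}^{n+t}_{N-t}(W).
\]

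The heart of the argument is the resulting \emph{fundamental four-term exact sequence}. A direct computation of the kernel and cokernel of $\overline{d}^{\,n}_{t}$ gives $\ker\overline{d}^{\,n}_{t}=\mathrm{Z}^n_t(W)/\mathrm{B}^n_{N-t}(W)=\mathrm{H}^n_t(W)$ and $\coker\overline{d}^{\,n}_{t}=\mathrm{Z}^{n+t}_{N-t}(W)/\mathrm{B}^{n+t}_t(W)=\mathrm{H}^{n+t}_{N-t}(W)$, that is,
\[
0\to \mathrm{H}^n_t(W)\to \mathrm{C}^n_{N-t}(W)\xrightarrow{\ \overline{d}^{\,n}_{t}\ }\mathrm{Z}^{n+t}_{N-t}(W)\to \mathrm{H}^{n+t}_{N-t}(W)\to 0.
\]
This is the step where the $N$-complex structure genuinely enters (through the two uses of $d_{[N]}=0$), and I expect the careful identification of these kernels and cokernels to be the main point to get right.

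Next I would feed the short exact sequence $0\to X\to Y\to Z\to 0$ into this construction. The functor $\mathrm{C}^n_{N-t}(-)$ is a cokernel, hence right exact, and $\mathrm{Z}^{n+t}_{N-t}(-)$ is a kernel, hence left exact; applying these to the given sequence (equivalently, running the snake lemma on the differential ladders $d^{\,n-(N-t)}_{[N-t]}$ and $d^{\,n+t}_{[N-t]}$ across $X,Y,Z$) produces a commutative diagram with exact rows
\[
\mathrm{C}^n_{N-t}(X)\to \mathrm{C}^n_{N-t}(Y)\to \mathrm{C}^n_{N-t}(Z)\to 0,\qquad 0\to \mathrm{Z}^{n+t}_{N-t}(X)\to \mathrm{Z}^{n+t}_{N-t}(Y)\to \mathrm{Z}^{n+t}_{N-t}(Z),
\]
the three vertical maps being $\overline{d}^{\,n}_{t}$. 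The squares commute since morphisms of $N$-complexes commute with differentials, so the snake lemma together with the four-term sequence yields the six-term exact sequence
\[
\mathrm{H}^n_t(X)\to \mathrm{H}^n_t(Y)\to \mathrm{H}^n_t(Z)\xrightarrow{\ \delta\ }\mathrm{H}^{n+t}_{N-t}(X)\to \mathrm{H}^{n+t}_{N-t}(Y)\to \mathrm{H}^{n+t}_{N-t}(Z).
\]

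Finally I would splice these six-term sequences into one doubly infinite sequence. Running the construction at $(n-(N-t),\,N-t)$ in place of $(n,t)$ gives a six-term sequence whose last three terms are exactly $\mathrm{H}^n_t(X)\to\mathrm{H}^n_t(Y)\to\mathrm{H}^n_t(Z)$, i.e.\ the first three terms of the $(n,t)$-sequence; gluing the whole family along these shared triples produces precisely the asserted sequence, with connecting map $\mathrm{H}^n_t(Z)\to\mathrm{H}^{n+t}_{N-t}(X)$. The only delicate bookkeeping is that for $N>2$ the indices must alternate between $t$ and $N-t$ while the degree shifts by $t$ (when $N=2$, $t=N-t=1$ and one recovers the classical long exact cohomology sequence); once this alternation is tracked, the remainder is a routine diagram chase.
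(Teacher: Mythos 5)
Your argument is correct. Note that the paper itself gives no proof of this proposition --- it is quoted from \cite{IKM} --- so there is nothing internal to compare against; your write-up supplies the standard argument, and it checks out. The key identifications are right: with $C^n_{N-t}(W)=W^n/\mathrm{B}^n_{N-t}(W)$ and $\mathrm{B}^{n+t}_{N-(N-t)}(W)=\mathrm{B}^{n+t}_t(W)=\im d^n_{[t]}$, the induced map $\overline{d}^{\,n}_t$ has kernel $\mathrm{Z}^n_t(W)/\mathrm{B}^n_{N-t}(W)=\mathrm{H}^n_t(W)$ and cokernel $\mathrm{Z}^{n+t}_{N-t}(W)/\mathrm{B}^{n+t}_t(W)=\mathrm{H}^{n+t}_{N-t}(W)$, both uses of $d_{[N]}=0$ being exactly where the $N$-complex condition enters; the snake lemma then yields the six-term sequence, and the splice at $(n-(N-t),N-t)$ produces the displayed connecting maps $\mathrm{H}^{n-(N-t)}_{N-t}(Z)\to\mathrm{H}^n_t(X)$ and $\mathrm{H}^n_t(Z)\to\mathrm{H}^{n+t}_{N-t}(X)$ with the correct alternation of the index between $t$ and $N-t$.
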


Let $X$ be an $N$-complex. Define suspension functors $\Sigma,\Sigma^{-1}:\mathbf{K}_N(R)\rightarrow\mathbf{K}_N(R)$ as follows\begin{center}$(\Sigma X)^n=X^{n+1}\oplus\cdots\oplus X^{n+N-1}$,  $d_{\Sigma X}=\left[\begin{smallmatrix} 0&1&0&\cdots&0&0\\ 0&0&1&\cdots&0&0\\\vdots&\vdots&\vdots&\vdots&\vdots&\vdots\\ 0&0&0&\cdots&0&1\\-d^{N-1}&-d^{N-2}&-d^{N-3}&\cdots&-d^2&-d \end{smallmatrix}\right]$,\end{center}
\begin{center}$(\Sigma^{-1}X)^n=X^{n-N+1}\oplus\cdots\oplus X^{n-1}$,  $d_{\Sigma^{-1}X}=\left[\begin{smallmatrix} -d&1&0&\cdots&0&0\\
-d^2&0&1&\cdots&0&0\\\vdots&\vdots&\vdots&\vdots&\vdots&\vdots\\
-d^{N-2}&0&0&\cdots&0&1\\-d^{N-1}&0&0&\cdots&0&0
\end{smallmatrix}\right]$.\end{center}
Let $f:X\rightarrow Y$ be a morphism in $\mathbf{C}_N(R)$. The mapping cone $C(f)$ of $f$ is defined as
\begin{center}$C(f)^n=Y^n\oplus(\Sigma X)^n$, $d^n_{C(f)}=\left[\begin{smallmatrix} d&f&0&\cdots&0&0\\
0&0&1&\cdots&0&0\\\vdots&\vdots&\vdots&\vdots&\vdots&\vdots\\
0&0&0&\cdots&0&1\\0&-d^{N-1}&-d^{N-2}&\cdots&-d^2&-d
\end{smallmatrix}\right]$.\end{center}

Two morphisms $f,g:X\rightarrow Y$ of $N$-complexes are called
homotopic if there exists $\{s^n:X^n\rightarrow Y^{n-N+1}\}$ such that
\begin{center}$g^n-f^n=\sum^{N-1}_{i=0}d^{N-1-i}s^{n+i}d^i,\
\forall\ n$.\end{center}
We denote the homotopy category of $N$-complexes by
$\mathbf{K}_N(R)$. Then the category $\mathbf{K}_N(R)$ is triangulated, and every exact triangle in $\mathbf{K}_N(R)$ is isomorphic to the form
\begin{center}$X\xrightarrow{f}Y\xrightarrow{g}C(f)\xrightarrow{h}\Sigma X$,\end{center}where $X,Y\in\mathbf{K}_N(R)$ and $g=\left[\begin{smallmatrix}
1\\0\\\vdots\\0\end{smallmatrix}\right],\
h=\left[\begin{smallmatrix} 0&1&0&\cdots&0&0\\
0&0&1&\cdots&0&0\\
\vdots&\vdots&\vdots&\vdots&\vdots&\vdots\\0&0&0&\cdots&0&1
\end{smallmatrix}\right]$.

A morphism $f:X\rightarrow Y$ is called a quasi-isomorphism if the induced morphism $\mathrm{H}^i_t(f):\mathrm{H}^i_t(X)\rightarrow\mathrm{H}^i_t(Y)$ is an isomorphism for any $i$ and $t=1,\cdots,N-1$, or equivalently if the mapping cone
$C(f)$ belongs to $\mathbf{K}^{\mathrm{ac}}_N(R)$ the full subcategory of $\mathbf{K}_N(R)$ consisting of $N$-acyclic $N$-complexes. The derived category $\mathbf{D}_N(R)$ of $N$-complexes is defined as the quotient category
$\mathbf{K}_N(R)/\mathbf{K}^{\mathrm{ac}}_N(R)$, which is also triangulated.

\begin{df}\label{lem:0.2} {\rm (\cite{K}) Let $q$ be a primitive $N$-th root of 1 ($q^N=1$), and let $(X,d_X)$,$(Y,d_Y)$ be two $N$-complexes of $R$-modules.

(a) The $q$-Hom is the $N$-complex $\mathrm{Hom}_R(X,Y)$ defined by
\begin{center}$\mathrm{Hom}_R(X,Y)^n=\prod_{i\in\mathbb{Z}}\mathrm{Hom}_R(X^i,Y^{i+n})$ with differential $d^n(f^i)=d^{i+n}_Yf^i-q^nf^{i+1}d^i_X$.\end{center}

(b) The $q$-tensor product is the $N$-complex $X\otimes_RY$ defined by
\begin{center}$(X\otimes_RY)^n=\coprod_{i\in\mathbb{Z}}(X^i\otimes_RY^{n-i})$ with differential $d^n(x\otimes y)=d_X(x)\otimes y+q^{|x|}x\otimes d_Y(y)$,\end{center}where $x,y$ are supposed to be homogeneous and $|x|$ denotes the degree of $x$.}
\end{df}

\begin{rem}\label{lem:0.4}{\rm (1) By the definition of $q$-Hom and $q$-tensor product of $N$-complexes and the isomorphism in \cite[Theorem 2.4]{IKM}, one can check the following isomorphisms:

\begin{center}$\mathrm{Hom}_R(X,\Sigma Y)\cong\Sigma\mathrm{Hom}_R(X,Y)$,\end{center}

\begin{center}$\mathrm{Hom}_R(\Sigma X,Y)\cong \mathrm{Hom}_R(X,\Sigma^{-1}Y)\cong\Sigma^{-1}\mathrm{Hom}_R(X,Y)$,\end{center}

\begin{center}$\Sigma X\otimes_RY\cong X\otimes_R\Sigma Y\cong\Sigma(X\otimes_RY)$.\end{center}

(2) It follows from \cite[Corollary 4.5]{Mi} that $(X\otimes_R-,\mathrm{Hom}_R(X,-))$ form a adjoint pair.}
\end{rem}

\begin{lem}\label{lem:0.3}{\it{For any morphism $f:X\rightarrow Y$ in $\mathbf{C}_N(R)$ and any $N$-complex $Z$, one has

$\mathrm{(1)}$ $\mathrm{Hom}_R(Z,C(f))\cong C(\mathrm{Hom}_R(Z,f))$.

$\mathrm{(2)}$  $\mathrm{Hom}_R(C(f),Z)\cong \Sigma^{-1}C(\mathrm{Hom}_R(f,Z))$.

$\mathrm{(3)}$ $C(f)\otimes_R Z\cong C(f\otimes Z)$ and $Z\otimes_RC(f)\cong C(Z\otimes f)$.}}
\end{lem}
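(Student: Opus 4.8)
The plan is to prove all three isomorphisms by the same mechanism: each of the functors involved is additive and interacts with the suspension functors exactly as recorded in Remark~\ref{lem:0.4}(1), and the mapping cone is built degreewise from a direct sum of the target with a suspension of the source. Thus for each claim I would first identify the underlying graded modules of the two sides and then verify that the $N$-differentials agree. Concretely, for a covariant functor $F$ commuting with $\Sigma$ one expects $F(C(f))\cong C(F(f))$, whereas for a contravariant functor sending $\Sigma$ to $\Sigma^{-1}$ one expects the extra $\Sigma^{-1}$ that appears in (2); the third line of Remark~\ref{lem:0.4}(1) supplies the corresponding statement for the tensor functors, giving (3).

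For (1), recall that $C(f)^{m}=Y^{m}\oplus X^{m+1}\oplus\cdots\oplus X^{m+N-1}$. Applying the degreewise formula for $q$-Hom gives
\[
\mathrm{Hom}_R(Z,C(f))^n=\prod_{i}\mathrm{Hom}_R\bigl(Z^i,C(f)^{i+n}\bigr)
=\mathrm{Hom}_R(Z,Y)^n\oplus(\Sigma\,\mathrm{Hom}_R(Z,X))^n,
\]
where the second equality uses $\mathrm{Hom}_R(Z,\Sigma X)\cong\Sigma\,\mathrm{Hom}_R(Z,X)$ from Remark~\ref{lem:0.4}(1). This is precisely the underlying graded module of $C(\mathrm{Hom}_R(Z,f))$. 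It then remains to expand the $q$-Hom differential $d^n(g)=d_{C(f)}\,g-q^n g\,d_Z$ in this block decomposition and to compare it with the cone matrix assembled from the $q$-Hom differentials of $\mathrm{Hom}_R(Z,X)$ and $\mathrm{Hom}_R(Z,Y)$ together with the morphism $\mathrm{Hom}_R(Z,f)$. The blocks involving $f$, the identity entries, and the entries $-d^{N-1},\dots,-d$ all match directly, and the only genuine check is that the power of $q$ produced by the degree shift in $\Sigma$ is the one demanded by the cone differential.

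For (2) the argument is identical except that $\mathrm{Hom}_R(-,Z)$ is contravariant; here the graded identification uses $\mathrm{Hom}_R(\Sigma X,Z)\cong\Sigma^{-1}\mathrm{Hom}_R(X,Z)$ from Remark~\ref{lem:0.4}(1), which is exactly what forces the prefactor $\Sigma^{-1}$ in $\Sigma^{-1}C(\mathrm{Hom}_R(f,Z))$, after which one again matches the two differentials block by block. For (3) one repeats the computation with $-\otimes_R Z$ and $Z\otimes_R-$, using $\Sigma X\otimes_R Z\cong\Sigma(X\otimes_R Z)$ and the analogous isomorphism on the other variable; now the tensor differential $d_X(x)\otimes y+q^{|x|}x\otimes d_Y(y)$ replaces the $q$-Hom differential, so the unit/sign bookkeeping is carried by the factor $q^{|x|}$.

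The main obstacle is purely the bookkeeping of the primitive $N$-th root of unity $q$ together with the signs occupying the bottom row of the cone matrix: one must confirm that under the degree-shift isomorphisms of Remark~\ref{lem:0.4}(1) the exponent of $q$ transforms correctly, so that the two $N$-differentials coincide exactly rather than differing by a unit in each block. I expect no conceptual difficulty beyond this, since the suspension isomorphisms of Remark~\ref{lem:0.4}(1) are designed precisely to absorb these $q$-powers, but the computation must be carried out carefully to avoid an off-by-one in the exponent of $q$ or a sign error in the entries coming from $d^{N-1},\dots,d$.
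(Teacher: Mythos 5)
Your proposal is correct and follows essentially the same route as the paper: identify the underlying graded module of each side via the degreewise decomposition $C(f)^m=Y^m\oplus(\Sigma X)^m$ together with the suspension isomorphisms of Remark~\ref{lem:0.4}(1), then expand the $q$-Hom (resp.\ $q$-tensor) differential in block form and match it against the cone matrix, which is exactly the computation carried out in the paper's proof. The only difference is presentational: the paper executes the block computation explicitly for an element $\left[\begin{smallmatrix}\alpha^i\\ \beta^i\end{smallmatrix}\right]$, while you defer that verification, but the underlying argument is identical.
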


\begin{proof} (1) For any $\left[\begin{smallmatrix} \alpha^i \\
\beta^i\end{smallmatrix}\right]\in\mathrm{Hom}_R(Z,C(f))^n\cong\mathrm{Hom}_R(Z,Y)^n\oplus
\mathrm{Hom}_R(Z,\Sigma X)^n$, we have
\begin{center}$\begin{aligned}d^n_{\mathrm{Hom}_R(Z,C(f))}\left[\begin{smallmatrix} \alpha^i \\
\beta^i\end{smallmatrix}\right]
&=d^{n+i}_{C(f)}\left[\begin{smallmatrix} \alpha^i \\
\beta^i\end{smallmatrix}\right]-q^n\left[\begin{smallmatrix} \alpha^{i+1} \\
\beta^{i+1}\end{smallmatrix}\right]d^i_Z\\
&=\left[\begin{smallmatrix} d^{n+i}_Y& f^{n+i+1}\\
0&d^{n+i}_{\Sigma X}\end{smallmatrix}\right]\left[\begin{smallmatrix} \alpha^i \\
\beta^i\end{smallmatrix}\right]-q^n\left[\begin{smallmatrix} \alpha^{i+1}d^i_Z \\
\beta^{i+1}d^i_Z\end{smallmatrix}\right]\\
&=\left[\begin{smallmatrix} d^{n+i}_Y\alpha^i+ f^{n+i+1}\beta^i-q^n\alpha^{i+1}d^i_Z \\
d^{n+i}_{\Sigma X}\beta^i-q^n\beta^{i+1}d^i_Z\end{smallmatrix}\right]\\
&=\left[\begin{smallmatrix} d^{n}_{\mathrm{Hom}_R(Z,Y)}& \mathrm{Hom}_R(Z,f)^{n+1}\\
0&d^{n}_{\mathrm{Hom}_R(Z,\Sigma X)}\end{smallmatrix}\right]\left[\begin{smallmatrix} \alpha^i \\
\beta^i\end{smallmatrix}\right],\end{aligned}$\end{center}which implies that $d^n_{\mathrm{Hom}_R(Z,C(f))}=\left[\begin{smallmatrix} d^{n}_{\mathrm{Hom}_R(Z,Y)}& \mathrm{Hom}_R(Z,f)^{n+1}\\
0&d^{n}_{\Sigma\mathrm{Hom}_R(Z,X)}\end{smallmatrix}\right]$, as desired.

(2) For any $\left[\begin{smallmatrix} \alpha^i \\
\beta^i\end{smallmatrix}\right]\in\mathrm{Hom}_R(C(f),Z)^n\cong\mathrm{Hom}_R(Y,Z)^n\oplus
\mathrm{Hom}_R(\Sigma X,Z)^n$, since $\left[\begin{smallmatrix} \alpha^i \\
\beta^i\end{smallmatrix}\right]$ corresponds to a morphism $\left[\begin{smallmatrix} \alpha^i &
\beta^i\end{smallmatrix}\right]:Y^i\oplus(\Sigma X)^i\rightarrow Z^{n+i}$ we have
\begin{center}$\begin{aligned}d^n_{\mathrm{Hom}_R(C(f),Z)}\left[\begin{smallmatrix} \alpha^i \\
\beta^i\end{smallmatrix}\right]
&=d^{n+i}_{Z}\left[\begin{smallmatrix} \alpha^i &
\beta^i\end{smallmatrix}\right]-q^n\left[\begin{smallmatrix} \alpha^{i+1} &
\beta^{i+1}\end{smallmatrix}\right]d^i_{C(f)}\\
&=d^{n+i}_Z\left[\begin{smallmatrix} \alpha^{i} &
\beta^{i}\end{smallmatrix}\right]-q^n\left[\begin{smallmatrix} \alpha^{i+1} &
\beta^{i+1}\end{smallmatrix}\right]\left[\begin{smallmatrix} d^{i}_Y& f^{i+1}\\
0&d^{i}_{\Sigma X}\end{smallmatrix}\right]\\
&=\left[\begin{smallmatrix} d^{n+i}_Z\alpha^i-q^n\alpha^{i+1}d^i_Y &
d^{n+i}_{Z}\beta^i-q^n\alpha^{i+1}f^{i+1}-q^n\beta^{i+1}d^i_{\Sigma X}\end{smallmatrix}\right]\\
&=\left[\begin{smallmatrix} d^{n}_{\mathrm{Hom}_R(Y,Z)}& 0\\
-q^n\mathrm{Hom}_R(f,Z)^{i+1}&d^{n}_{\mathrm{Hom}_R(\Sigma X,Z)}\end{smallmatrix}\right]\left[\begin{smallmatrix} \alpha^i \\
\beta^i\end{smallmatrix}\right],\end{aligned}$\end{center}which implies that $d^n_{\mathrm{Hom}_R(C(f),Z)}=\left[\begin{smallmatrix} d^{n}_{\mathrm{Hom}_R(Z,Y)}& 0\\
-q^n\mathrm{Hom}_R(f,Z)^{i+1}&d^{n}_{\Sigma\mathrm{Hom}_R(Z,X)}\end{smallmatrix}\right]$, as desired.

(3) These follow from $d_{C(f)\otimes_RZ}=\left[\begin{smallmatrix} d_{Y\otimes_RZ}& f\otimes Z\\
0&d_{\Sigma X\otimes_RZ}\end{smallmatrix}\right]$ and $d_{Z\otimes_RC(f)}=\left[\begin{smallmatrix} d_{Z\otimes_RY}& Z\otimes f\\
0&d_{Z\otimes_R\Sigma X}\end{smallmatrix}\right]$.
\end{proof}

Let $\mathscr{P}$ be the class of
projective $R$-modules. An $N$-complex $P$ is called semi-projective if $P^n\in\mathscr{P}$ for all $n$, and every $f:P\rightarrow E$ is null homotopic whenever $E\in\mathbf{K}^{\mathrm{ac}}_N(R)$. Let $\mathscr{I}$ be the class of injective $R$-modules. An $N$-complex $I$ is called semi-injective if $I^n\in\mathscr{I}$ for all $n$, and every $f:E\rightarrow I$ is null homotopic whenever $E\in\mathbf{K}^{\mathrm{ac}}_N(R)$.

Let $X$ be an $N$-complex. By Lemma \ref{lem:0.3}, we have four triangle functors $\mathrm{Hom}_R(X,-)$, $\mathrm{Hom}_R(-,X):\mathbf{K}_N(R)\rightarrow\mathbf{K}_N(\mathbb{Z})$ and $-\otimes_RX,X\otimes_R-:\mathbf{K}_N(R)\rightarrow\mathbf{K}_N(\mathbb{Z})$.
Then \cite[Corollary 3.29]{IKM} yields the following derived functors
\begin{center}$\mathrm{RHom}_R(X,-),\mathrm{RHom}_R(-,X):\mathbf{D}_N(R)\rightarrow\mathbf{D}_N(\mathbb{Z})$,\end{center}
\begin{center} $-\otimes^\mathrm{L}_RX,X\otimes^\mathrm{L}_R-:\mathbf{D}_N(R)\rightarrow\mathbf{D}_N(\mathbb{Z})$.\end{center} They can be computed via semi-projective and semi-injective resolution of the $N$-complexes by \cite[Theorem 3.27]{IKM}, respectively.

Let $X$ be an $N$-complex. The stupid truncation $\tau_{>i}(X):0\rightarrow X^{i+1}\rightarrow X^{i+2}\rightarrow\cdots$ and $\tau_{\leq i}(X):\cdots\rightarrow X^{i-1}\rightarrow X^{i}\rightarrow0$.
For $i,j\in\mathbb{Z}$ let $\mathbf{C}^{[i,j]}_N(R)$ be the full subcategory of $\mathbf{C}_N(R)$ whose objects
are the $N$-complexes concentrated in the degree range $[i,j]:=\{i,\cdots,j\}$. Here is a useful criterion for quasi-isomorphisms.

\begin{lem}\label{lem:3.3}{\it{Let $R$ and $R'$ be two rings and $F,G:\mathrm{Mod}R\rightarrow \mathbf{C}_N(R')$ two additive
functors, and let $\eta:F\rightarrow G$ be a natural transformation. Consider the extensions $F,G:\mathbf{C}_N(R)\rightarrow \mathbf{C}_N(R')$. Suppose $X\in\mathbf{C}_N(R)$
satisfies the following conditions:

$\mathrm{(1)}$ There are $j_0,j_1\in\mathbb{Z}$ such that $F(X^i),G(X^i)\in \mathbf{C}^{[j_0,j_1]}_N(R')$ for every $i\in\mathbb{Z}$.

$\mathrm{(2)}$  The homomorphism $\eta_{X^i}:F(X^i)\rightarrow G(X^i)$ is a quasi-isomorphism for every
$i\in\mathbb{Z}$.\\
Then $\eta_X:F(X)\rightarrow G(X)$ is a quasi-isomorphism.}}
\end{lem}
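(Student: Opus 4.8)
The plan is to realize the extended functors as total $N$-complexes and then reduce to a finite, inductive situation. Recall that the extension of an additive functor $F\colon\mathrm{Mod}R\to\mathbf{C}_N(R')$ sends $X$ to the total $N$-complex of the double $N$-complex $(i,n)\mapsto F(X^i)^n$: the horizontal differential is induced by $F(d_X)$, the vertical differential is that of each $F(X^i)$, and the two are combined with the appropriate powers of $q$ exactly as in the $q$-tensor product of Definition \ref{lem:0.2}, so that $d^N=0$ holds. Since $\eta$ is natural, $\eta_X$ is the induced morphism of total $N$-complexes, and it suffices to show that $\mathrm{H}^m_t(\eta_X)$ is an isomorphism for each $m\in\mathbb{Z}$ and each $t\in\{1,\dots,N-1\}$.

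First I would use hypothesis (1) to reduce to the case where $X$ is concentrated in finitely many degrees. Fix $m$ and $t$. Because every column $F(X^i)$ lies in internal degrees $[j_0,j_1]$, each total degree $\mu$ of $F(X)$ receives contributions only from the finitely many columns $i\in[\mu-j_1,\mu-j_0]$. Hence the relevant cycle and boundary modules $\mathrm{Z}^m_t$ and $\mathrm{B}^m_{N-t}$ of $F(X)$ — computed from the terms in total degrees $m-(N-t),\dots,m+t$ and the differentials among them — involve only columns $i$ in a fixed finite interval. Replacing $X$ by its stupid truncation to original degrees in a large enough interval $[a,b]$ therefore changes neither $\mathrm{H}^m_t(F(X))$ nor $\mathrm{H}^m_t(G(X))$, and does so compatibly with $\eta$; since the truncation still satisfies (1) and (2) (the discarded columns simply become $0$), I may assume $X$ is bounded.

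For bounded $X$ I would induct on the number of nonzero columns. If $X$ has a single column $X^a$, then $F(X)$ is $F(X^a)$ with its internal degree translated by $a$ and its differential scaled by a fixed power of $q$; neither operation alters kernels or images, so $\eta_X$ is a quasi-isomorphism precisely because $\eta_{X^a}$ is, by (2). For the inductive step, split off the top column via the degreewise-split short exact sequence $0\to\tau_{>b-1}X\to X\to\tau_{\le b-1}X\to0$ in $\mathbf{C}_N(R)$, in which $\tau_{>b-1}X$ is the single column $X^b$ and $\tau_{\le b-1}X$ has one fewer column. Applying the additive functor $F$ degreewise and totalizing preserves this split sequence, yielding a short exact sequence $0\to F(\tau_{>b-1}X)\to F(X)\to F(\tau_{\le b-1}X)\to0$ in $\mathbf{C}_N(R')$, and likewise for $G$, with $\eta$ providing a morphism between them. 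Proposition \ref{prop:0.1} converts each short exact sequence into a long exact sequence of the groups $\mathrm{H}^\bullet_\bullet$, and $\eta$ induces a commutative ladder between the two; since $\eta$ is an isomorphism on the outer terms (those of $\tau_{>b-1}X$ by the base case and of $\tau_{\le b-1}X$ by the inductive hypothesis), the Five Lemma forces $\mathrm{H}^m_t(\eta_X)$ to be an isomorphism.

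The main obstacle is the passage to bounded complexes: one must verify carefully that the uniform vertical bound in (1) makes $\mathrm{Z}^m_t$ and $\mathrm{B}^m_{N-t}$ of the total $N$-complex depend only on a finite block of columns, so that discarding the remaining columns is invisible to $\mathrm{H}^m_t$ and natural in $\eta$; this is exactly where (1) is indispensable and where the $q$-twisted total differential has to be tracked so that the identifications are canonical. Once boundedness is secured the induction is routine, relying only on the fact that additive functors preserve degreewise-split exact sequences, on the long exact sequence of Proposition \ref{prop:0.1} together with the naturality of its connecting maps, and on the Five Lemma.
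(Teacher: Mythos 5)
Your proposal is correct and follows essentially the same route as the paper's proof: reduce to the bounded case by observing that hypothesis (1) forces $\mathrm{H}^m_t$ of the totalization to depend only on finitely many columns (the paper replaces $X$ by $\tau_{[j_0+m-N+t,\,j_1+m+t]}(X)$), then induct on the number of columns using the degreewise-split stupid truncation sequence, the long exact sequence of Proposition \ref{prop:0.1}, and the Five Lemma. Your write-up merely makes explicit the totalization, the base case, and the Five Lemma step that the paper leaves implicit.
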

\begin{proof}  Assume that $X$ is bounded. If $X\simeq D^i_1(M)$ for some $R$-module $M$ and $i\in\mathbb{Z}$ then this is given. Otherwise the inductive step
is done using the short exact sequence \begin{center}$0\rightarrow \tau_{>i}(X)\rightarrow X\rightarrow\tau_{\leq i}(X)\rightarrow0$\end{center} of $N$-complexes.
Now assume $X$ is arbitrary. We prove that $\mathrm{H}^i_t(\eta_X):\mathrm{H}^i_t(F(X))\rightarrow\mathrm{H}^i_t(G(X))$ is an isomorphism for every $i\in \mathbb{Z}$ and a fixed $t$. For any $i\leq j$ set $\tau_{[i,j]}:=\tau_{\leq j}\circ\tau_{>i}$. Given an integer $i$, the morphism
$\mathrm{H}^i_t(\eta_X)$ only depends on the morphism
\begin{center}$\tau_{[i-N+t,i+t]}(\eta_X):\tau_{[i-N+t,i+t]}(F(X))\rightarrow\tau_{[i-N+t,i+t]}(G(X))$\end{center}of $N$-complexes.
Thus we can replace $\eta_X$ with $\eta_{X'}$, where $X'=\tau_{[j_0+i-N+t,j_1+i+t]}(X)$.
But $X'$ is bounded, so the morphism $\eta_{X'}$ is a quasi-isomorphism.
\end{proof}

\bigskip
\section{\bf The Koszul $N$-complex}
In this section, we give a construction of Koszul $N$-complexes and compute the cohomology of a few concrete Koszul $N$-complexes.

\begin{df}\label{lem:1.1}{\rm Let $x$ be an element in $R$. The Koszul $N$-complex on $x$,
denoted by $K^\bullet(x;R)$, is the mapping cone $C(x)$ of $x:D^0_1(R)\rightarrow D^0_1(R)$
\begin{center}$0\rightarrow R\stackrel{1}\rightarrow R\stackrel{1}\rightarrow\cdots\stackrel{1}\rightarrow R\stackrel{x}\rightarrow R\rightarrow0$\end{center}
with $R$ in degrees $-N+1,\cdots,0$. Suppose we are given a sequence $\emph{\textbf{x}}=x_1,\cdots,x_d$
of elements in $R$. By induction, the Koszul $N$-complex on $\emph{\textbf{x}}$, denoted by $K^\bullet(\emph{\textbf{x}};R)$, is the mapping cone $C(x_d)$ of $x_d:K^\bullet(x_1,\cdots,x_{d-1};R)\rightarrow K^\bullet(x_1,\cdots,x_{d-1};R)$.

One can check that $K^\bullet(\emph{\textbf{x}};R)\cong K^\bullet(x_1;R)\otimes_R\cdots\otimes_RK^\bullet(x_d;R)$.}
\end{df}

\begin{exa}\label{lem:1.2}{\rm Let $x,y,z$ be three elements in $R$.

(1) For $N=3$, the Koszul $3$-complex on $x$ is\begin{center}$K^\bullet(x;R):0\rightarrow R\stackrel{1}\rightarrow R\stackrel{x}\rightarrow R\rightarrow0$\end{center}with $R$ in degrees $-2,-1,0$. The Koszul $3$-complex on $x,y$ is \begin{center}$K^\bullet(x,y;R):0\rightarrow R\xrightarrow{\left[\begin{smallmatrix} 1\\ -1 \end{smallmatrix}\right]} R^2\xrightarrow{\left[\begin{smallmatrix}y&0\\0& 1\\-x&-x \end{smallmatrix}\right]} R^3\xrightarrow{\left[\begin{smallmatrix} 1&y&0\\ 0&0&1 \end{smallmatrix}\right]} R^2\xrightarrow{\left[\begin{smallmatrix} x&y \end{smallmatrix}\right]}R\xrightarrow{}0$\end{center}with the five nonzero modules in degrees $-4,\cdots,0$.
The Koszul $3$-complex on $x,y,z$ is \begin{center}$K^\bullet(x,y,z;R):0\rightarrow R\xrightarrow{\left[\begin{smallmatrix} 1\\ -1\\1 \end{smallmatrix}\right]} R^3\xrightarrow{\left[\begin{smallmatrix}z&0&0\\0& 1&0\\0&0&1\\-y&-y&0\\1&0&-1\\0&x&x \end{smallmatrix}\right]} R^6\xrightarrow{\left[\begin{smallmatrix} 1&z&0&0&0&0\\-1& 0&z&0&0&0\\0& 0&0&1&0&0 \\0& 0&0&0&1&0\\ 0& 0&0&0&0&1\\0& -y&-y&-1&-y&0\\0& x&x&0&0&-1\end{smallmatrix}\right]} R^7\xrightarrow{\left[\begin{smallmatrix} y&0&z&0&0&0&0\\0& 1&0&z&0&0&0\\-x&-x&0&0&z&0& 0\\0& 0&0&0&0&1&0\\ 0& 0&0&0&0&0&1\\0& 0&-x&-xy&-y&-x&-y\end{smallmatrix}\right]}R^6\xrightarrow{\left[\begin{smallmatrix} 1&y&0&z&0&0\\ 0&0&1&0&z&0\\0&0&0&0&0&1 \end{smallmatrix}\right]} R^3\xrightarrow{\left[\begin{smallmatrix} x&y &z\end{smallmatrix}\right]}R\xrightarrow{}0$\end{center}with the seven nonzero modules in degrees $-6,\cdots,0$.

(2) For $N=4$, the Koszul $4$-complex on $x$ is\begin{center}$K^\bullet(x;R):0\rightarrow R\stackrel{1}\rightarrow R\stackrel{1}\rightarrow R\stackrel{x}\rightarrow R\rightarrow0$\end{center}with $R$ in degrees $-3,-2,-1,0$. The Koszul $4$-complex on $x,y$ is \begin{center}$K^\bullet(x,y;R):0\rightarrow R\xrightarrow{\left[\begin{smallmatrix} 1\\ -1 \end{smallmatrix}\right]} R^2\xrightarrow{\left[\begin{smallmatrix}1&0\\0& 1\\-1&-1 \end{smallmatrix}\right]} R^3\xrightarrow{\left[\begin{smallmatrix}y&0&0\\0&1&0\\0&0&1\\-x&-x&-x \end{smallmatrix}\right]} R^4\xrightarrow{\left[\begin{smallmatrix} 1&y&0&0\\ 0&0&1&0\\0&0&0&1 \end{smallmatrix}\right]} R^3\xrightarrow{\left[\begin{smallmatrix} 1&y&0\\ 0&0&1 \end{smallmatrix}\right]} R^2\xrightarrow{\left[\begin{smallmatrix} x&y \end{smallmatrix}\right]}R\xrightarrow{}0$\end{center}with the seven nonzero modules in degrees $-6,\cdots,0$. The Koszul $4$-complex on $x,y,z$ is \begin{center}$K^\bullet(x,y,z;R):0\rightarrow R\xrightarrow{\left[\begin{smallmatrix} 1\\ -1\\1 \end{smallmatrix}\right]} R^3\xrightarrow{\left[\begin{smallmatrix}1&0&0\\0& 1&0\\0&0&1\\-1&-1&0\\1&0&-1\\0&1&1 \end{smallmatrix}\right]} R^6\xrightarrow{\left[\begin{smallmatrix} z&0&0&0&0&0\\0& 1&0&0&0&0\\0& 0&1&0&0&0 \\0& 0&0&1&0&0\\ 0& 0&0&0&1&0\\0& 0&0&0&0&1\\-y& -y&0&-y&0&0\\1& 0&-1&0&-1&0\\0& 1&1&0&0&-1\\0& 0&0&x&x&x\\\end{smallmatrix}\right]} R^{10}\xrightarrow{\left[\begin{smallmatrix} 1&z&0&0&0&0&0&0&0&0\\-1&0&z&0&0&0&0&0&0&0\\0&0&0&1&0&0&0&0&0&0 \\0&0&0&0&1&0&0&0&0&0\\ 0&0&0&0&0&1&0&0&0&0\\0&0&0&0&0&0&1&0&0&0\\0&0&0&0&0&0&0&1&0&0\\0&0&0&0&0&0&0&0&1&0\\0&0&0&0&0&0&0&0&0&1\\0&-y&-y&-y&-y&0&-1&-y&0&0\\0&1&1&0&0&-1&0&0&-1&0\\0&0&0&x&x&x&0&0&0&-1\\ \end{smallmatrix}\right]} R^{12}\xrightarrow{\left[\begin{smallmatrix} 1&0&z&0&0&0&0&0&0&0&0&0\\0&1&0&z&0&0&0&0&0&0&0&0\\-1&-1&0&0&z&0&0&0&0&0&0&0\\0&0&0&0&0&1&0&0&0&0&0&0 \\0&0&0&0&0&0&1&0&0&0&0&0\\ 0&0&0&0&0&0&0&1&0&0&0&0\\0&0&0&0&0&0&0&0&1&0&0&0\\0&0&0&0&0&0&0&0&0&1&0&0\\0&0&0&0&0&0&0&0&0&0&1&0\\0&0&0&0&0&0&0&0&0&0&0&1\\0&0&-y&-y&-y&-1&-y&-y&0&-1&-y&0\\0&0&x&x&x&0&0&0&-1&0&0&-1\\ \end{smallmatrix}\right]} R^{12}\xrightarrow{\left[\begin{smallmatrix} y&0&0&z&0&0&0&0&0&0&0&0\\0&1&0&0&z&0&0&0&0&0&0&0\\0&0&1&0&0&z&0&0&0&0&0&0\\-x&-x&-x&0&0&0&z&0&0&0&0&0 \\0&0&0&0&0&0&0&1&0&0&0&0\\ 0&0&0&0&0&0&0&0&1&0&0&0\\0&0&0&0&0&0&0&0&0&1&0&0\\0&0&0&0&0&0&0&0&0&0&1&0\\0&0&0&0&0&0&0&0&0&0&0&1\\0&0&0&-x&-xy&-xy&-y&-x&-xy&-y&-x&-y\\ \end{smallmatrix}\right]} R^{10}\xrightarrow{\left[\begin{smallmatrix} 1&y&0&0&z&0&0&0&0&0\\0&0&1&0&0&z&0&0&0&0\\0&0&0&1&0&0&z&0&0&0\\0&0&0&0&0&0&0&1&0&0\\ 0&0&0&0&0&0&0&0&1&0\\0&0& 0&0&0&0&0&0&0&1\end{smallmatrix}\right]}R^6\xrightarrow{\left[\begin{smallmatrix} 1&y&0&z&0&0\\ 0&0&1&0&z&0\\0&0&0&0&0&1 \end{smallmatrix}\right]} R^3\xrightarrow{\left[\begin{smallmatrix} x&y &z\end{smallmatrix}\right]}R\xrightarrow{}0$\end{center}with the ten nonzero modules in degrees $-9,\cdots,0$.}
\end{exa}

We next define the Koszul $N$-complex on $N$-complexes and Koszul cohomology.

\begin{df}\label{lem:1.3}{\rm Let $\emph{\textbf{x}}=x_1,\cdots,x_d$ be a sequence of elements in $R$ and $X$ an $N$-complex. The Koszul $N$-complex of $\emph{\textbf{x}}$ on $X$ is the $N$-complex
\begin{center}$K^\bullet(\emph{\textbf{x}};X):=K^\bullet(\emph{\textbf{x}};R)\otimes_RX$.\end{center}
For $t=1,\cdots,N-1$, the Koszul cohomology of $\emph{\textbf{x}}$ on $X$ is
\begin{center}$\mathrm{H}^j_t(\emph{\textbf{x}};X)=\mathrm{H}^j_t(K^\bullet(\emph{\textbf{x}};X))$ for $j\in\mathbb{Z}$.\end{center}}
\end{df}

\begin{exa}\label{lem:1.4}{\rm Let $x,y$ be two elements in $R$ and $M$ an $R$-module.

(1) For $N=3$, the Koszul $3$-complex of $x$ on $M$ is\begin{center}$K^\bullet(x;M):0\rightarrow M\stackrel{1}\rightarrow M\stackrel{x}\rightarrow M\rightarrow0$\end{center}with $M$ in degrees $-2,-1,0$.
Therefore, one has that
\begin{center}$\mathrm{H}^{-2}_1(x;M)=\mathrm{H}^{-1}_2(x;M)=0$,\end{center}
\begin{center}$\mathrm{H}^{-1}_1(x;M)=\mathrm{H}^{-2}_2(x;M)=(0:_Mx)$,\end{center}
\begin{center}$\mathrm{H}^{0}_1(x;M)=\mathrm{H}^{0}_2(x;M)=M/xM$.\end{center}The Koszul $3$-complex of $x,y$ on $M$ is\begin{center}$K^\bullet(x,y;M):0\rightarrow M\xrightarrow{\left[\begin{smallmatrix} 1\\ -1 \end{smallmatrix}\right]} M^2\xrightarrow{\left[\begin{smallmatrix}y&0\\0& 1\\-x&-x \end{smallmatrix}\right]} M^3\xrightarrow{\left[\begin{smallmatrix} 1&y&0\\ 0&0&1 \end{smallmatrix}\right]} M^2\xrightarrow{\left[\begin{smallmatrix} x&y \end{smallmatrix}\right]}M\xrightarrow{}0$.\end{center}Therefore, we conclude that \begin{center}$\mathrm{H}^{-4}_1(x,y;M)=\mathrm{H}^{-4}_2(x,y;M)=0$,\end{center}
\begin{center}$\mathrm{H}^{-3}_1(x,y;M)=\mathrm{H}^{-3}_2(x,y;M)=(0:_M(x,y))$,\end{center}
\begin{center}$\mathrm{H}^{0}_1(x,y;M)=\mathrm{H}^{0}_2(x,y;M)=M/(x,y)M$.\end{center}

(2) For $N=4$, the Koszul $4$-complex of $x$ on $M$ is\begin{center}$K^\bullet(x;M):0\rightarrow M\stackrel{1}\rightarrow M\stackrel{1}\rightarrow M\stackrel{x}\rightarrow M\rightarrow0$\end{center}with $M$ in degrees $-3,-2,-1,0$. Therefore, one has that \begin{center}$\mathrm{H}^{-3}_1(x;M)=\mathrm{H}^{-3}_2(x;M)=\mathrm{H}^{-2}_1(x;M)=\mathrm{H}^{-2}_3(x;M)
=\mathrm{H}^{-1}_2(x;M)=\mathrm{H}^{-1}_3(x;M)=0$,  $\mathrm{H}^{-3}_3(x;M)=\mathrm{H}^{-2}_2(x;M)=\mathrm{H}^{-1}_1(x;M)=(0:_Mx)$, $\mathrm{H}^{0}_1(x;M)=\mathrm{H}^{0}_2(x;M)=\mathrm{H}^{0}_3(x;M)=M/xM$.\end{center}
The Koszul $4$-complex of $x,y$ on $M$ is \begin{center}$K^\bullet(x,y;M):0\rightarrow M\xrightarrow{\left[\begin{smallmatrix} 1\\ -1 \end{smallmatrix}\right]} M^2\xrightarrow{\left[\begin{smallmatrix}1&0\\0& 1\\-1&-1 \end{smallmatrix}\right]} M^3\xrightarrow{\left[\begin{smallmatrix}y&0&0\\0&1&0\\0&0&1\\-x&-x&-x \end{smallmatrix}\right]} M^4\xrightarrow{\left[\begin{smallmatrix} 1&y&0&0\\ 0&0&1&0\\0&0&0&1 \end{smallmatrix}\right]} M^3\xrightarrow{\left[\begin{smallmatrix} 1&y&0\\ 0&0&1 \end{smallmatrix}\right]} M^2\xrightarrow{\left[\begin{smallmatrix} x&y \end{smallmatrix}\right]}M\xrightarrow{}0$.\end{center}Therefore, one obtains that\begin{center}$\mathrm{H}^{-6}_t(x,y;M)=\mathrm{H}^{-5}_t(x,y;M)=0$ for $t=1,2,3$,\end{center}
\begin{center}$\mathrm{H}^{-4}_1(x,y;M)=\mathrm{H}^{-4}_2(x,y;M)=\mathrm{H}^{-4}_3(x,y;M)=(0:_M(x,y))$, \end{center}
\begin{center}$\mathrm{H}^{0}_1(x,y;M)=\mathrm{H}^{0}_2(x,y;M)=\mathrm{H}^{0}_3(x,y;M)=M/(x,y)M$.\end{center}

(3) Let $\emph{\textbf{x}}=x_1,... ,x_d$ be a sequence
 in $R$. The Koszul $N$-complex on $x_1$ is\begin{center}$K^\bullet(x_1;M):0\rightarrow M\stackrel{1}\rightarrow M\stackrel{1}\rightarrow\cdots \stackrel{1}\rightarrow M\stackrel{x_1}\rightarrow M\rightarrow0$\end{center}with $M$ in degrees $-N+1,\cdots,0$. Therefore, one has that
\begin{center}$\mathrm{H}^{-t}_t(x_1;M)=(0:_Mx_1)$ for $t=1,\cdots,N-1$.\end{center}
\begin{center}$\mathrm{H}^{0}_t(x_1;M)=M/x_1M$ for $t=1,\cdots,N-1$.\end{center}
\begin{center}$\mathrm{H}^{-N}_t(x_1;M)=0$ for $t=1,\cdots,N-1$.\end{center}
 Consider the exact sequence of $N$-complexes
\begin{center}$0\rightarrow K^\bullet(x_1;M)\rightarrow K^\bullet(x_1,x_2;M)\rightarrow \Sigma K^\bullet(x_1;M)\rightarrow0$,\end{center} which implies that
\begin{center}$\mathrm{H}^{-N}_t(K^\bullet(x_1,x_2;M))=(0:_M(x_1,x_2))$ for $t=1,\cdots,N-1$,\end{center}
\begin{center}$\mathrm{H}^{0}_t(K^\bullet(x_1,x_2;M))=M/(x_1,x_2)M$ for $t=1,\cdots,N-1$,\end{center}
\begin{center}$\mathrm{H}^{-N-t}_t(K^\bullet(x_1,x_2;M))=0$ for $t=1,\cdots,N-1$.\end{center}
By induction, one obtains that
\begin{center}$\bigg\{\begin{array}{cc}
\mathrm{H}^{-kN-t}_t(\emph{\textbf{x}};M)=0           & d=2k \\
 \hspace{-0.32cm}\mathrm{H}^{-kN}_t(\emph{\textbf{x}};M)=0             & \hspace{0.75cm}d=2k-1 \\
\end{array}$for $t=1,\cdots,N-1$.\end{center}
\begin{center}$\bigg\{\begin{array}{cc}
 \hspace{-0.32cm}\mathrm{H}^{-kN}_t(\emph{\textbf{x}};M)=(0:_M\emph{\textbf{x}})            &  d=2k\\
 \mathrm{H}^{-kN-t}_t(\emph{\textbf{x}};M)=(0:_M\emph{\textbf{x}})             & \hspace{0.75cm}d=2k+1 \\
\end{array}$for $t=1,\cdots,N-1$,\end{center}
\begin{center}$\mathrm{H}^{0}_t(\emph{\textbf{x}};M)=M/\emph{\textbf{x}}M$ for $t=1,\cdots,N-1$,\end{center}}
\end{exa}

\begin{prop}\label{lem:1.8}{\it{Given a sequence of elements $\textbf{x}=(x_1,\cdots,x_d)$ in $R$, one has an isomorphism in $\mathbf{K}_N(R)$
\begin{center}$K^\bullet(\textbf{x};R)\cong\Sigma^{d}\mathrm{Hom}_R(K^\bullet(\textbf{x};R),R)$.\end{center}}}
\end{prop}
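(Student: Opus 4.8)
The plan is to induct on $d$, exploiting the recursive mapping-cone definition of $K^\bullet(\mathbf{x};R)$ from Definition \ref{lem:1.1} together with the behaviour of $\mathrm{Hom}_R(-,R)$ on mapping cones recorded in Lemma \ref{lem:0.3}(2). Writing $K' = K^\bullet(x_1,\dots,x_{d-1};R)$, recall that $K^\bullet(\mathbf{x};R) = C(x_d)$ is the mapping cone of the morphism $x_d\colon K' \to K'$ given by multiplication. Since $\Sigma$ is invertible, the asserted isomorphism is equivalent to the dual form $\mathrm{Hom}_R(K^\bullet(\mathbf{x};R),R) \cong \Sigma^{-d} K^\bullet(\mathbf{x};R)$, and it is this form that I would carry through the induction.

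For the base case $d = 0$ one takes $K^\bullet(\varnothing;R) = D^0_1(R)$, and a direct inspection of the $q$-Hom in Definition \ref{lem:0.2} shows that $\mathrm{Hom}_R(D^0_1(R),R)^n = \mathrm{Hom}_R(R,R)$ is concentrated in degree $0$, so $\mathrm{Hom}_R(D^0_1(R),R) = D^0_1(R)$, which is exactly $\Sigma^{0}K^\bullet(\varnothing;R)$. (Equivalently one could anchor the induction at $d=1$, but that case is itself just the first turn of the recursion below.)

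For the inductive step, I would first apply Lemma \ref{lem:0.3}(2) with $Z = D^0_1(R)$ and $f = (x_d\colon K'\to K')$ to obtain $\mathrm{Hom}_R(K^\bullet(\mathbf{x};R),R) = \mathrm{Hom}_R(C(x_d),R) \cong \Sigma^{-1} C(\mathrm{Hom}_R(x_d,R))$. Because $\mathrm{Hom}_R(-,R)$ is an $R$-linear functor and multiplication by $x_d$ is a scalar endomorphism, $\mathrm{Hom}_R(x_d,R)$ is again multiplication by $x_d$, now on $\mathrm{Hom}_R(K',R)$. The induction hypothesis provides an $R$-linear isomorphism $\mathrm{Hom}_R(K',R) \cong \Sigma^{-(d-1)}K'$; being $R$-linear it commutes with the two self-maps ``multiplication by $x_d$,'' hence identifies $C(\mathrm{Hom}_R(x_d,R))$ with $C(\Sigma^{-(d-1)}x_d)$. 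Finally, since $\Sigma$ is a triangle functor it commutes with mapping cones, so $C(\Sigma^{-(d-1)}x_d) \cong \Sigma^{-(d-1)}C(x_d) = \Sigma^{-(d-1)}K^\bullet(\mathbf{x};R)$. Chaining these isomorphisms yields $\mathrm{Hom}_R(K^\bullet(\mathbf{x};R),R) \cong \Sigma^{-1}\Sigma^{-(d-1)}K^\bullet(\mathbf{x};R) = \Sigma^{-d}K^\bullet(\mathbf{x};R)$, and applying $\Sigma^{d}$ gives the stated isomorphism in $\mathbf{K}_N(R)$.

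The two compatibility verifications carry all the weight, and the second is the genuinely delicate one. The first — that the inductive isomorphism intertwines the two copies of multiplication by $x_d$ — is safe, since scalar multiplication is a natural transformation of the identity functor and every $R$-linear map commutes with it. The harder point is that $\Sigma$ commutes with the cone with the correct $q$-factors and signs: the suspension $\Sigma$ of an $N$-complex is not a bare degree shift but the elaborate matrix functor defined above, so I expect the main obstacle to be establishing $\Sigma C(f) \cong C(\Sigma f)$ cleanly. I would handle it either by invoking the triangulated structure of $\mathbf{K}_N(R)$ directly, or, if an honest $\mathbf{C}_N(R)$-level isomorphism is wanted, by a matrix computation of differentials tracking the primitive root $q$ exactly in the style of the proof of Lemma \ref{lem:0.3}, together with the identities $\Sigma X\otimes_R Y \cong \Sigma(X\otimes_R Y)$ from Remark \ref{lem:0.4}.
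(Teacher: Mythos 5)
Your proof is correct and follows essentially the same route as the paper's: an induction on $d$ using the mapping-cone presentation $K^\bullet(\mathbf{x};R)=C(x_d)$ and the effect of $\mathrm{Hom}_R(-,R)$ on cones, the paper phrasing this via rotating the exact triangle $K'\xrightarrow{x_d}K'\to K^\bullet(\mathbf{x};R)\to\Sigma K'$ where you invoke Lemma \ref{lem:0.3}(2) directly. If anything, you are more explicit than the paper about the two compatibility checks ($R$-linearity intertwining the multiplication maps, and $\Sigma$ commuting with cones up to isomorphism), both of which are standard consequences of the $R$-linear triangulated structure of $\mathbf{K}_N(R)$.
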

\begin{proof} For $x_1$ there exists an exact triangle $R\xrightarrow{x_1}R\xrightarrow{} K^\bullet(x_1;R)\rightarrow\Sigma R$ in $\mathbf{K}_N(R)$. Applying the functor $\mathrm{RHom}_R(-,R)$ to this triangle, one gets an exact triangle\begin{center}$\Sigma^{-1}R\rightarrow \mathrm{Hom}_R(K^\bullet(x_1;R),R)\rightarrow R\xrightarrow{x_1}R$.\end{center}Thus
$K^\bullet(x_1;R)\cong\Sigma\mathrm{Hom}_R(K^\bullet(x_1;R),R)$ in $\mathbf{K}_N(R)$. For $x_2$ there exists an exact triangle $K^\bullet(x_1;R)\xrightarrow{x_2}K^\bullet(x_1;R)\xrightarrow{} K^\bullet(x_1,x_2;R)\rightarrow\Sigma K^\bullet(x_1;R)$ in $\mathbf{K}_N(R)$. Applying the functor $\mathrm{RHom}_R(-,R)$ to this triangle, one gets an exact triangle\begin{center}$\Sigma^{-1}\mathrm{Hom}_R(K^\bullet(x_1;R),R)\rightarrow
\mathrm{Hom}_R(K^\bullet(x_1,x_2;R),R)
\rightarrow\mathrm{Hom}_R(K^\bullet(x_1;R),R)\rightarrow\mathrm{Hom}_R(K^\bullet(x_1;R),R)$,\end{center}
 which implies that
$K^\bullet(x_1,x_2;R)\cong\Sigma^{2}\mathrm{Hom}_R(K^\bullet(x_1,x_2;R),R)$ in $\mathbf{K}_N(R)$. Continuing this process, we obtain the isomorphism we seek.
\end{proof}

\section{\bf The $\check{\mathrm{C}}$ech $N$-complex}
For $x\in R$, the localization $R_x$
is obtained by inverting the multiplicatively closed set $\{1,x,x^2,\cdots\}$. Let $\iota:R\rightarrow R_x$ be the canonical map sending each $r\in R$ to the class of the fraction
$r/1\in R_x$. This section gives a construction of $\check{\mathrm{C}}$ech $N$-complexes.

\begin{con}\label{lem:2.1}{\rm Let $x$ be an element in $R$. We have a commutative diagram:
\begin{center} $\xymatrix@C=23pt@R=18pt{
\vdots\ar[d]&\vdots \ar[d]^x& \vdots \ar[d]^x &&\vdots\ar[d]^x&\vdots\ar@{=}[d]\\
K^\bullet(x^3;R):\ar[d]\ 0\ar[r] & R\ar@{=}[r] \ar[d]^x& R\ar@{=}[r] \ar[d]^x&\cdots\ar@{=}[r]& R\ar[r]^{x^3} \ar[d]^x& R\ar[r] \ar@{=}[d]& 0\\
K^\bullet(x^2;R):\ar[d]\ 0\ar[r] & R\ar@{=}[r] \ar[d]^x& R\ar@{=}[r] \ar[d]^x&\cdots\ar@{=}[r]& R\ar[r]^{x^2} \ar[d]^x& R\ar[r] \ar@{=}[d]& 0\\
K^\bullet(x;R):\ 0\ar[r] & R\ar@{=}[r] & R\ar@{=}[r] &\cdots\ar@{=}[r]& R\ar[r]^x & R\ar[r] & 0\\}$
\end{center}
Applying the functor $\mathrm{Hom}_R(-,R)$ to this diagram, we have a commutative diagram:
\begin{center} $\xymatrix@C=23pt@R=18pt{
 0\ar[r] & R\ar[r]^x \ar@{=}[d]& R\ar@{=}[r] \ar[d]^x&\cdots\ar@{=}[r]& R\ar@{=}[r] \ar[d]^x& R\ar[r] \ar[d]^x& 0\\
  0\ar[r] & R\ar[r]^{x^2} \ar@{=}[d]& R\ar@{=}[r] \ar[d]^x&\cdots\ar@{=}[r]& R\ar@{=}[r] \ar[d]^x& R\ar[r] \ar[d]^x& 0\\
 0\ar[r] & R\ar[r]^{x^3}\ar@{=}[d] & R\ar@{=}[r]\ar[d]^x &\cdots\ar@{=}[r]& R\ar@{=}[r]\ar[d]^x & R\ar[r]\ar[d]^x & 0\\
&\vdots & \vdots  & &\vdots&\vdots\\}$
\end{center}
In the limit we get the following $N$-complex
\begin{center}$0\rightarrow R\xrightarrow{\iota}R_x\xrightarrow{1}R_x\xrightarrow{1}\cdots\xrightarrow{1}R_x\xrightarrow{}0$\end{center} with modules $R$ in degree $0$ and $R_x$ in degrees $1,\cdots,N-1$, which is called
the $\check{\mathrm{C}}$ech $N$-complex on $x$, denoted by $\check{C}^\bullet(x;R)$. We also have an exact triangle in $\mathbf{K}_N(R)$
\begin{center}$\Sigma^{-1}R_x\rightarrow \check{C}^\bullet(x;R)\rightarrow R\xrightarrow{\iota}R_x$.\end{center}Let $x,y$ be two elements in $R$. Then the morphisms
\begin{center} $\xymatrix@C=23pt@R=18pt{
K^\bullet(x^s;R)\ar[d]^{u}:\ 0\ar[r] & R\ar@{=}[r] \ar[d]^{xy}& R\ar@{=}[r] \ar[d]^{xy}&\cdots\ar@{=}[r]& R\ar[r]^{x^s} \ar[d]^{xy}& R\ar[r] \ar[d]^y& 0\\
K^\bullet(x^{s-1};R):\ 0\ar[r] & R\ar@{=}[r] & R\ar@{=}[r] &\cdots\ar@{=}[r]& R\ar[r]^{x^{s-1}} & R\ar[r] & 0\\}$
\end{center}
\begin{center} $\xymatrix@C=23pt@R=20pt{
K^\bullet(x^s;R)\ar[d]^{v}:\ 0\ar[r] & R\ar@{=}[r] \ar[d]^{x}& R\ar@{=}[r] \ar[d]^{x}&\cdots\ar@{=}[r]& R\ar[r]^{x^s} \ar[d]^{x}& R\ar[r] \ar@{=}[d]& 0\\
K^\bullet(x^{s-1};R):\ 0\ar[r] & R\ar@{=}[r] & R\ar@{=}[r] &\cdots\ar@{=}[r]& R\ar[r]^{x^{s-1}} & R\ar[r] & 0\\}$
\end{center}induce a commutative diagram in $\mathbf{C}_N(R)$:
\begin{center} $\xymatrix@C=23pt@R=18pt{
0\ar[r] & K^\bullet(x^s;R)\ar[r] \ar[d]^{v}& K^\bullet(x^s,y^s;R)\ar[r]\ar@{.>}[d]& \Sigma K^\bullet(x^s;R)\ar[d]^{\Sigma u}\ar[r]&0\\
0\ar[r] & K^\bullet(x^{s-1};R)\ar[r]& K^\bullet(x^{s-1},y^{s-1};R)\ar[r] & \Sigma K^\bullet(x^{s-1};R)\ar[r]&0}$\end{center}Applying the functor $\mathrm{Hom}_R(-,R)$ to the diagram, one obtains a direct system
\begin{center}$\mathrm{Hom}_R(K^\bullet(x,y;R),R)\rightarrow\mathrm{Hom}_R(K^\bullet(x^2,y^2;R),R)\rightarrow \mathrm{Hom}_R(K^\bullet(x^3,y^3;R),R)\rightarrow\cdots$.\end{center}In the limit we get an $N$-complex $\underrightarrow{\textrm{lim}}\mathrm{Hom}_R(K^\bullet(x^s,y^s;R),R)$, which is called
the $\check{\mathrm{C}}$ech $N$-complex on $x,y$, denoted by $\check{C}^\bullet(x,y;R)$. We also have an exact triangle in $\mathbf{K}_N(R)$
\begin{center}$\Sigma^{-1}\check{C}^\bullet(x;R)_y\rightarrow \check{C}^\bullet(x,y;R)\rightarrow \check{C}^\bullet(x;R)\xrightarrow{\iota}\check{C}^\bullet(x;R)_y$.\end{center}
For a sequence $\emph{\textbf{x}}=(x_1,\cdots,x_d)$ of elements in $R$, set  $\emph{\textbf{x}}^s=x^s_1,\cdots,x^s_d$ and $\emph{\textbf{y}}=(x_1,\cdots,x_{d-1})$.
By induction, the $\check{\mathrm{C}}$ech $N$-complex $\check{C}^\bullet(\emph{\textbf{x}};R)$ on $\emph{\textbf{x}}$ is $\underrightarrow{\textrm{lim}}\mathrm{Hom}_R(K^\bullet(\emph{\textbf{x}}^s;R),R)$
 and we have an exact triangle in $\mathbf{K}_N(R)$
\begin{center}$\Sigma^{-1}\check{C}^\bullet(\emph{\textbf{y}};R)_{x_d}\rightarrow \check{C}^\bullet(\emph{\textbf{x}};R)\rightarrow \check{C}^\bullet(\emph{\textbf{y}};R)\xrightarrow{\iota}\check{C}^\bullet(\emph{\textbf{y}};R)_{x_d}$.\end{center}
In fact, by induction, one can obtain the following isomorphism \begin{center}$\check{C}^\bullet(\emph{\textbf{x}};R)
\cong\check{C}^\bullet(x_1;R)\otimes_R\cdots\otimes_R\check{C}^\bullet(x_d;R)$.\end{center}}
\end{con}

\begin{exa}\label{lem:2.2}{\rm Let $x,y$ be two elements in $R$.

For $N=3$, the $\check{\mathrm{C}}$ech $3$-complex on $x$ is\begin{center}$\check{C}^\bullet(x;R):0\rightarrow R\stackrel{\iota_x}\rightarrow R_x\stackrel{1}\rightarrow R_x\rightarrow0$.\end{center}Therefore, the $\check{\mathrm{C}}$ech $3$-complex on $x,y$ is \begin{center}$\check{C}^\bullet(x,y;R):0\rightarrow R\xrightarrow{\left[\begin{smallmatrix} \iota_x\\ \iota_y \end{smallmatrix}\right]} R_x\oplus R_y\xrightarrow{\left[\begin{smallmatrix}1&0\\\iota_y& 0\\0&1 \end{smallmatrix}\right]} R_x\oplus R_{xy}\oplus R_y\xrightarrow{\left[\begin{smallmatrix} \iota_y&0&-\iota_x\\ 0&1&-\iota_x\end{smallmatrix}\right]} R_{xy}\oplus R_{xy}\xrightarrow{\left[\begin{smallmatrix} 1&-1 \end{smallmatrix}\right]}R_{xy}\xrightarrow{}0$,\end{center}where the five nonzero modules are in degrees $0,1,2,3,4$.

For $N=4$, the $\check{\mathrm{C}}$ech $4$-complex on $x$ is\begin{center}$\check{C}^\bullet(x;R):0\rightarrow R\stackrel{\iota_x}\rightarrow R_x\stackrel{1}\rightarrow R_x\stackrel{1}\rightarrow  R_x\rightarrow0$.\end{center}Therefore, the $\check{\mathrm{C}}$ech $4$-complex on $x,y$ is \begin{center}$\check{C}^\bullet(x,y;R):0\rightarrow R\xrightarrow{\left[\begin{smallmatrix} \iota_x\\ \iota_y \end{smallmatrix}\right]} R_x\oplus R_y\xrightarrow{\left[\begin{smallmatrix}1&0\\ \iota_y& 0\\0&1 \end{smallmatrix}\right]} R_x\oplus R_{xy}\oplus R_y\xrightarrow{\left[\begin{smallmatrix}1&0&0\\ \iota_y&0&0\\0&1&0\\0&0&1 \end{smallmatrix}\right]} R_x\oplus R_{xy}\oplus R_{xy}\oplus R_y\xrightarrow{\left[\begin{smallmatrix}\iota_y&0&0&-\iota_x\\0&1&0&-\iota_x\\0&0&1&-\iota_x \end{smallmatrix}\right]} R_{xy}\oplus R_{xy}\oplus R_{xy}\xrightarrow{\left[\begin{smallmatrix}1&0&-1\\0& 1&-1 \end{smallmatrix}\right]} R_{xy}\oplus R_{xy}\xrightarrow{\left[\begin{smallmatrix} 1&-1 \end{smallmatrix}\right]}R_{xy}\xrightarrow{}0$,\end{center}where the seven nonzero modules are in degrees $0,1,2,3,4,5,6$.}
\end{exa}

\begin{lem}\label{lem:3.7}{\it{For a sequence $\textbf{x}=x_1,\cdots,x_d$ in $R$, the natural morphism $e:\check{C}(\textbf{x};R)\rightarrow R$ $(R$ is viewed as the $N$-complex $D^0_1(R))$ induces a quasi-isomorphism \begin{center}$e\otimes 1,1\otimes e:\check{C}(\textbf{x};R)\otimes_R\check{C}(\textbf{x};R)\stackrel{\simeq}\rightarrow \check{C}(\textbf{x};R)$.\end{center}}}
\end{lem}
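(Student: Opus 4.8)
My plan is to reduce to the one-variable case and then tensor the resulting quasi-isomorphisms together. By Construction~\ref{lem:2.1} there is an isomorphism $\check{C}(\textbf{x};R)\cong\check{C}(x_1;R)\otimes_R\cdots\otimes_R\check{C}(x_d;R)$, and under it the augmentation $e$ is the tensor product $e_1\otimes\cdots\otimes e_d$ of the one-variable augmentations $e_i\colon\check{C}(x_i;R)\to R$ (each is the projection onto the degree-$0$ term, which is $R$ in every factor). Using associativity and commutativity of the tensor product (up to the usual $q$-signs) to regroup the $2d$ factors, the map $e\otimes 1$ on $\check{C}(\textbf{x};R)\otimes_R\check{C}(\textbf{x};R)$ is carried to $\bigotimes_{i=1}^d\bigl(e_i\otimes 1\bigr)$, where $e_i\otimes 1\colon\check{C}(x_i;R)\otimes_R\check{C}(x_i;R)\to R\otimes_R\check{C}(x_i;R)=\check{C}(x_i;R)$; the same holds for $1\otimes e$. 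It therefore suffices to (i) settle $d=1$, and (ii) prove that a tensor product of quasi-isomorphisms between bounded $N$-complexes of flat modules is a quasi-isomorphism.

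For (i), I would apply the triangle functor $\check{C}(x;R)\otimes_R-$ to the exact triangle $\check{C}(x;R)\xrightarrow{e}R\xrightarrow{\iota}R_x\to\Sigma\check{C}(x;R)$ of Construction~\ref{lem:2.1}. Since Lemma~\ref{lem:0.3}(3) shows this functor preserves mapping cones, the result is an exact triangle whose first morphism is $1\otimes e$ and whose third term is $\check{C}(x;R)\otimes_R R_x$; hence $1\otimes e$ is a quasi-isomorphism exactly when $\check{C}(x;R)\otimes_R R_x$ is $N$-acyclic. But because $x$ is a unit in $R_x$, the map $\iota\otimes 1\colon R\otimes_R R_x\to R_x\otimes_R R_x$ is an isomorphism, so $\check{C}(x;R)\otimes_R R_x$ has $R_x$ in each of the degrees $0,\dots,N-1$ with every differential an isomorphism; that is, it is $D^{N-1}_N(R_x)$, and a direct computation of $\mathrm{Z}^n_t$ and $\mathrm{B}^n_t$ shows this $N$-complex is $N$-acyclic. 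Running the same argument with $-\otimes_R\check{C}(x;R)$ takes care of $e\otimes 1$.

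For (ii), let $f\colon A\to A'$ be a quasi-isomorphism of bounded $N$-complexes and $B$ a bounded $N$-complex of flat modules. Lemma~\ref{lem:0.3}(3) gives $C(f\otimes 1_B)\cong C(f)\otimes_R B$, and $C(f)$ is $N$-acyclic, so I only need that $E\otimes_R B$ is $N$-acyclic for every bounded $N$-acyclic $E$. As each $B^i$ is flat, the exact functor $-\otimes_R B^i$ commutes with the formation of $\mathrm{Z}^n_t$ and $\mathrm{B}^n_t$, whence $E\otimes_R B^i$ is $N$-acyclic for all $i$; Lemma~\ref{lem:3.3}, applied with $F=E\otimes_R-$, $G=0$ and $\eta=0$ (hypothesis~(1) holds since $E$ is bounded), then upgrades this to $N$-acyclicity of $E\otimes_R B$. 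Feeding the one-variable quasi-isomorphisms of the previous paragraph through (ii) one factor at a time gives the claim for $\textbf{x}$. I expect (ii) to be the main obstacle: for $N$-complexes the cohomology is not a single subquotient, so controlling $\mathrm{H}^n_t$ of a tensor product genuinely needs the reduction to a single flat module provided by Lemma~\ref{lem:3.3}; by contrast the $q$-sign twists in the regrouping of the first paragraph only alter differentials and leave the vanishing of cohomology untouched.
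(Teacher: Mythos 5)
Your one-variable argument and your step (ii) are both sound: identifying the cone of $1\otimes e$ with $\check{C}(x;R)\otimes_RR_x\cong D^{N-1}_N(R_x)$ and checking directly that the latter is $N$-acyclic is exactly the content of the paper's $3\times3$ diagram, and the flat-tensoring lemma via Lemma~\ref{lem:3.3} with $G=0$ is a correct (and useful) way to propagate acyclicity. The genuine gap is in your reduction to $d=1$. You regroup $\bigl(\bigotimes_iC_i\bigr)\otimes\bigl(\bigotimes_iC_i\bigr)\cong\bigotimes_i(C_i\otimes C_i)$ ``using associativity and commutativity of the tensor product (up to the usual $q$-signs)''. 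For $N>2$ the $q$-tensor product of $N$-complexes is monoidal but \emph{not} commutative, not even up to $q$-signs: if you try $\tau(x\otimes y)=c(|x|,|y|)\,y\otimes x$ and impose $\tau d=d\tau$ against the Leibniz rule $d(x\otimes y)=dx\otimes y+q^{|x|}x\otimes dy$, the two resulting recursions force $q^{2ab}$ to be independent of $a$ and $b$, i.e.\ $q^2=1$. So no natural degreewise-scalar swap exists, and without a natural braiding you cannot carry $e\otimes 1$ to $\bigotimes_i(e_i\otimes 1)$. This is precisely why the paper's reduction instead appeals to semi-flatness of the factors $\check{C}(x_i;R)$ and never permutes tensor factors.

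The reduction can be repaired with the tools you already have, but the argument is different from the one you wrote. Factor $e=e_1\otimes\cdots\otimes e_d$ as the composite of the maps $g_i=1^{\otimes(i-1)}\otimes e_{x_i}\colon C_1\otimes\cdots\otimes C_i\to C_1\otimes\cdots\otimes C_{i-1}$ (no transpositions needed, since $f\otimes g=(f\otimes 1)\circ(1\otimes g)$ for degree-zero chain maps). Then $1_{\check{C}(\textbf{x};R)}\otimes e$ is a composite whose $i$-th cone is, by Lemma~\ref{lem:0.3}(3) and associativity, $\check{C}(\textbf{x};R)\otimes_RC_1\otimes\cdots\otimes C_{i-1}\otimes_RR_{x_i}$. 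Rebracket this as $C_1\otimes\cdots\otimes C_{i-1}\otimes_R\bigl(C_i\otimes_RW\bigr)$ with $W=C_{i+1}\otimes\cdots\otimes C_d\otimes C_1\otimes\cdots\otimes C_{i-1}\otimes_RR_{x_i}$; every term of $W$ is an $R_{x_i}$-module, so $C_i\otimes_RW^j\cong D^{N-1}_N(W^j)$ is $N$-acyclic, and your step (ii) (Lemma~\ref{lem:3.3}) upgrades this to acyclicity of $C_i\otimes_RW$ and then of the whole cone. That salvages the theorem, but it is an associativity argument, not the commutativity argument you proposed.
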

\begin{proof} By symmetry it is enough to look only at
\begin{center}$1\otimes e:\check{C}(\emph{\textbf{x}};R)\otimes_R\check{C}(\emph{\textbf{x}};R)\rightarrow \check{C}(\emph{\textbf{x}};R)$.\end{center}Since the $N$-complexes $\check{C}(x_i;R)$ are semi-flat, it is enough to consider the case $d=1$ and $x=x_1$. We have the following commutative diagram:
\begin{center} $\xymatrix@C=20pt@R=18pt{
&0\ar[d] & 0 \ar[d] &0\ar[d]& \\
0\ar[r]&\Sigma^{-2}R_x\ar[d]\ar[r]  & \Sigma^{-1}\check{C}(x;R)_x \ar[d]\ar[r] &\Sigma^{-1}R_x\ar[d]\ar[r]&0 \\
0\ar[r]&\Sigma^{-1}\check{C}(x;R)_x\ar[d]\ar[r]  & \check{C}(x;R)\otimes_R\check{C}(x;R) \ar[d]\ar[r]^{\ \ \ \ \ \ \ 1\otimes e} & \check{C}(x;R)\ar[d]\ar[r]&0\\
0\ar[r]&\Sigma^{-1}R_x\ar[r]\ar[d]& \check{C}(x;R)\ar[r]\ar[d] & R\ar[d]\ar[r]&0 \\
&0&0 & 0 }$
\end{center}
Note that $x:R_x\rightarrow R_x$ is an isomorphism in $\mathbf{D}_N(R)$, it follows that $\Sigma^{-1}\check{C}(x;R)_x$ is acyclic. This completes the proof.
\end{proof}

Given an $N$-complex $X$, set $\check{C}^\bullet(\emph{\textbf{x}};X):=\check{C}^\bullet(\emph{\textbf{x}};R)\otimes_RX$. The $R$-module
\begin{center}$\check{\mathrm{H}}^j_t(\emph{\textbf{x}};X)=\mathrm{H}^j_t(\check{C}(\emph{\textbf{x}};X))$ for $t=1,\cdots, N-1$\end{center}
is the $j$-th $\check{\mathrm{C}}$ech cohomology of $\emph{\textbf{x}}$ on $X$.

\begin{exa}\label{lem:3.9}{\rm (1) Let $x$ be an element in $R$. Then
\begin{center}$\begin{aligned}\mathrm{\check{H}}^{0}_t(x;R)
&=\{r\in R\hspace{0.03cm}|\hspace{0.03cm}r/1=0\ \textrm{in}\ R_x\}\\
&=\{r\in R\hspace{0.03cm}|\hspace{0.03cm}x^sr=0\ \textrm{for\ some}\ s\geq0\}\\
&=\bigcup_{s\geq0}(0:_Rx^s)\cong\underrightarrow{\textrm{lim}}\mathrm{Hom}_R(R/(x^s),R),\end{aligned}$\end{center}
 \begin{center}$\mathrm{\check{H}}^{t}_{N-t}(x;R)=R_x/R$ for $t=1,\cdots,N-1$.\end{center}

(2) Let $\emph{\textbf{x}}=x_1,\cdots,x_d$ be a sequence of element
in $R$ and $M$ an $R$-module. The $\check{\mathrm{C}}$ech $N$-complex of $x_1$ on $M$ is\begin{center}$\check{C}(x_1;M):0\rightarrow M\xrightarrow{\iota}M_{x_1}\xrightarrow{1}M_{x_1}\xrightarrow{1}\cdots\xrightarrow{1}M_{x_1}\xrightarrow{}0$\end{center} with modules $M$ in degree $0$ and $M_{x_1}$ in degrees $1,\cdots,N-1$, Therefore, one has that
\begin{center}$\mathrm{\check{H}}^{0}_t(x_1;M)\cong\underrightarrow{\textrm{lim}}\mathrm{Hom}_R(R/(x^s),M)$ for $t=1,\cdots,N-1$.\end{center}
\begin{center}$\mathrm{\check{H}}^{t}_{N-t}(x_1;M)=M_{x_1}/M$ for $t=1,\cdots,N-1$.\end{center}
\begin{center}$\mathrm{\check{H}}^{N}_t(x_1;M)=0$ for $t=1,\cdots,N-1$.\end{center}
 Consider the exact sequence of $N$-complexes
\begin{center}$0\rightarrow \Sigma^{-1}\check{C}(x_1;M)_{x_2}\rightarrow \check{C}(x_1,x_2;M)\rightarrow \check{C}(x_1;M)\rightarrow0$,\end{center} which implies that
\begin{center}$\mathrm{\check{H}}^{0}_t(x_1,x_2;M)\cong
\underrightarrow{\textrm{lim}}\mathrm{Hom}_R(R/(x_1^s,x_2^s),M)$ for $t=1,\cdots,N-1$.\end{center}
\begin{center}$\mathrm{\check{H}}^{N}_t(x_1;M)=M_{x_1,x_2}/(\mathrm{Im}M_{x_1}+\mathrm{Im}M_{x_2})$ for $t=1,\cdots,N-1$.\end{center}
\begin{center}$\mathrm{\check{H}}^{N+t}_{N-t}(x_1,x_2;M)=0$ for $t=1,\cdots,N-1$.\end{center}
By induction, one obtains that
\begin{center}$\bigg\{\begin{array}{cc}
 \hspace{-1.15cm}\mathrm{\check{H}}^{j}_t(\emph{\textbf{x}};M)=0\ \textrm{for}\ j\geq kN         & \hspace{0.75cm}d=2k-1 \\
\mathrm{\check{H}}^{j}_{N-t}(\emph{\textbf{x}};M)=0\ \textrm{for}\ j\geq kN+t         & d=2k \\
\end{array}$for $1\leq t\leq N-1$,\end{center}
\begin{center}$\bigg\{\begin{array}{cc}
\mathrm{\check{H}}^{(k-1)N+t}_{N-t}(\emph{\textbf{x}};M)=M_{x_1\cdots x_d}/\Sigma^c_{i=1}\mathrm{image}M_{x_1\cdots x_{i-1}x_{i+1}\cdots x_d}            &  d=2k-1\\
 \hspace{-1.0cm}\mathrm{\check{H}}^{kN}_t(\emph{\textbf{x}};M)=M_{x_1\cdots x_d}/\Sigma^c_{i=1}\mathrm{image}M_{x_1\cdots x_{i-1}x_{i+1}\cdots x_d}           & \hspace{-0.66cm}d=2k \\
\end{array}$for $1\leq t\leq N-1$,\end{center}
\begin{center}$\mathrm{\check{H}}^{0}_t(\emph{\textbf{x}};M)\cong
\underrightarrow{\textrm{lim}}\mathrm{Hom}_R(R/(\emph{\textbf{x}}^s),M)$ for $1\leq t\leq N-1$.\end{center}}
\end{exa}

\section{\bf Derived torsion of $N$-complexes}
In this section, we explicit derived torsion functors in $\mathbf{D}_N(R)$ using the $\check{\mathrm{C}}$ech $N$-complex.

Let $\mathfrak{a}$ be an ideal of $R$.
For each $R$-module $M$, set
\begin{center}$\Gamma_\mathfrak{a}(M)=\{m\in M\hspace{0.03cm}|\hspace{0.03cm}\mathfrak{a}^nm=0\ \textrm{for\ some\ integer}\ n\}$.\end{center}
 There is a functorial homomorphism $\sigma_M:\Gamma_\mathfrak{a}(M)\rightarrow M$ which is just the
inclusion. When they coincide, $M$ is said to
be $\mathfrak{a}$-torsion. The association $M\rightarrow \Gamma_\mathfrak{a}(M)$ extends to define a left exact additive functor on $\mathbf{C}_N(R)$, it is called
the $\mathfrak{a}$-torsion functor. By \cite[Corollary 3.29]{IKM}, the functor $\Gamma_\mathfrak{a}$ has a right derived functor \begin{center}$\mathrm{R}\Gamma_\mathfrak{a}:\mathbf{D}_N(R)\rightarrow\mathbf{D}_N(R),\ \xi:\Gamma_\mathfrak{a}\rightarrow\mathrm{R}\Gamma_\mathfrak{a}$\end{center}constructed using semi-injective resolutions.

\begin{prop}\label{lem:2.0}{\it{There is a functorial morphism $\sigma^\mathrm{R}_X:\mathrm{R}\Gamma_\mathfrak{a}(X)\rightarrow X$, such that $\sigma_X=\sigma^\mathrm{R}_X\circ\xi_X$ as morphisms $\Gamma_\mathfrak{a}(X)\rightarrow X$ in $\mathbf{D}_N(R)$.}}
\end{prop}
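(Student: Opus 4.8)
The plan is to obtain $\sigma^{\mathrm{R}}$ by deriving the natural transformation $\sigma\colon\Gamma_\mathfrak{a}\to\mathrm{Id}$ of functors on $\mathbf{C}_N(R)$ and reading off its compatibility with $\xi$ on a semi-injective resolution. Concretely, fix a semi-injective resolution $\lambda_X\colon X\xrightarrow{\simeq}I_X$, so that by construction of $\mathrm{R}\Gamma_\mathfrak{a}$ via such resolutions one has $\mathrm{R}\Gamma_\mathfrak{a}(X)=\Gamma_\mathfrak{a}(I_X)$ and $\xi_X=\Gamma_\mathfrak{a}(\lambda_X)$. The inclusion $\sigma_{I_X}\colon\Gamma_\mathfrak{a}(I_X)\to I_X$ already lives in $\mathbf{C}_N(R)$, and since $\lambda_X$ is a quasi-isomorphism it becomes invertible in $\mathbf{D}_N(R)$. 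I would therefore set
\[
\sigma^{\mathrm{R}}_X:=\lambda_X^{-1}\circ\sigma_{I_X}\colon\ \mathrm{R}\Gamma_\mathfrak{a}(X)=\Gamma_\mathfrak{a}(I_X)\xrightarrow{\sigma_{I_X}}I_X\xrightarrow{\lambda_X^{-1}}X
\]
in $\mathbf{D}_N(R)$.

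The required factorization is then immediate from naturality of $\sigma$ along $\lambda_X$: the square
\[
\begin{CD}
\Gamma_\mathfrak{a}(X) @>{\sigma_X}>> X\\
@V{\Gamma_\mathfrak{a}(\lambda_X)}VV @VV{\lambda_X}V\\
\Gamma_\mathfrak{a}(I_X) @>{\sigma_{I_X}}>> I_X
\end{CD}
\]
commutes in $\mathbf{C}_N(R)$, whence
\[
\sigma^{\mathrm{R}}_X\circ\xi_X=\lambda_X^{-1}\circ\sigma_{I_X}\circ\Gamma_\mathfrak{a}(\lambda_X)=\lambda_X^{-1}\circ\lambda_X\circ\sigma_X=\sigma_X
\]
in $\mathbf{D}_N(R)$, which is exactly the asserted identity.

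For functoriality, given $f\colon X\to Y$ I would lift it along the chosen resolutions to a morphism $\widetilde{f}\colon I_X\to I_Y$ with $\widetilde{f}\circ\lambda_X\simeq\lambda_Y\circ f$; such a lift exists and is unique up to homotopy because $I_Y$ is semi-injective and $\lambda_X$ is a quasi-isomorphism, by the lifting property of semi-injective $N$-complexes furnished by the resolution theory of \cite{IKM}. Naturality of $\sigma$ then gives $\sigma_{I_Y}\circ\Gamma_\mathfrak{a}(\widetilde{f})=\widetilde{f}\circ\sigma_{I_X}$, and since $\lambda_Y^{-1}\circ\widetilde{f}=f\circ\lambda_X^{-1}$ in $\mathbf{D}_N(R)$ this yields $\sigma^{\mathrm{R}}_Y\circ\mathrm{R}\Gamma_\mathfrak{a}(f)=f\circ\sigma^{\mathrm{R}}_X$. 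Independence of the chosen resolution is verified by the same device, comparing any two resolutions through a quasi-isomorphism that is again unique up to homotopy.

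The main obstacle is organizational rather than conceptual: one must ensure that the homotopies produced when lifting morphisms and when comparing resolutions are correctly absorbed upon passing to $\mathbf{D}_N(R)$, so that $X\mapsto(\mathrm{R}\Gamma_\mathfrak{a}(X),\sigma^{\mathrm{R}}_X)$ is genuinely well defined and natural there. This is precisely where the triangulated structure on $\mathbf{K}_N(R)$ and the existence and essential uniqueness of semi-injective resolutions of $N$-complexes enter; granting these, the verification runs formally parallel to the classical case of ordinary complexes.
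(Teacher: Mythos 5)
Your construction is correct and is essentially the paper's own proof: the author likewise fixes a semi-injective resolution $\alpha\colon X\to I$ and sets $\sigma^{\mathrm{R}}_X=\alpha^{-1}\circ\sigma_I\circ\xi_I^{-1}\circ\mathrm{R}\Gamma_{\mathfrak a}(\alpha)$, which under your identification $\mathrm{R}\Gamma_{\mathfrak a}(X)=\Gamma_{\mathfrak a}(I_X)$, $\xi_X=\Gamma_{\mathfrak a}(\lambda_X)$ is exactly your $\lambda_X^{-1}\circ\sigma_{I_X}$. Your verification of the factorization via the naturality square and your remarks on functoriality and independence of the resolution supply details the paper only asserts.
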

\begin{proof} Let $X\stackrel{\alpha}\rightarrow I$ be a semi-injective resolution, and define $\sigma^\mathrm{R}_X=\alpha^{-1}\circ\sigma_I\circ\xi^{-1}_I\circ\mathrm{R}\Gamma_\mathfrak{a}(\alpha)$. This is independent of the resolution.
\end{proof}

For each $N$-complex $X$ and $i\in\mathbb{Z}$, the
$i$th local cohomology of $X$ with support in $\mathfrak{a}$ is \begin{center}$\mathrm{H}^i_{t,\mathfrak{a}}(X)=\mathrm{H}^i_t(\mathrm{R}\Gamma_\mathfrak{a}(X))$ for $t=1,\cdots,N-1$.\end{center}

\begin{exa}\label{lem:3.2}{\rm Let $R=\mathbb{Z}$ and $p$ be a prime number, and let $M$ be an  indecomposable $R$-module.
By the fundamental theorem of Abelian groups, $M$
is isomorphic to $\mathbb{Z}/d\mathbb{Z}$ where either $d=0$ or $d$ is a prime power. In either
case the 3-complex \begin{center}$0\rightarrow \mathbb{Q}/d\mathbb{Z}\rightarrow \mathbb{Q}/\mathbb{Z}\stackrel{1}\rightarrow \mathbb{Q}/\mathbb{Z}\rightarrow0$\end{center}
is an injective resolution of $\mathbb{Z}/d\mathbb{Z}$. In what follows, $\mathbb{Z}_p$ denotes $\mathbb{Z}$ with $p$ inverted.

\textbf{Case 1.} If $M=\mathbb{Z}/p^e\mathbb{Z}$ for some integer $e\geq1$, then applying $\Gamma_{(p)}(-)$
to the resolution above yields the 3-complex
\begin{center}$0\rightarrow\mathbb{Z}_p/p^e\mathbb{Z}\rightarrow\mathbb{Z}_p/\mathbb{Z}\stackrel{1}\rightarrow\mathbb{Z}_p/\mathbb{Z}\rightarrow0$.\end{center}
 Hence one obtains that \begin{center}$\mathrm{H}^0_{1,(p)}(M)=\mathrm{H}^0_{2,(p)}(M)=\mathbb{Z}/p^e\mathbb{Z}=M$,\end{center}
 \begin{center}$\mathrm{H}^1_{1,(p)}(M)=\mathrm{H}^1_{2,(p)}(M)=0
 =\mathrm{H}^2_{1,(p)}(M)=\mathrm{H}^2_{2,(p)}(M)$.\end{center}

 \textbf{Case 2.} If $M=\mathbb{Z}/d\mathbb{Z}$ with $d$ nonzero and relatively prime to $p$, then applying $\Gamma_{(p)}(-)$
to the resolution above yields the 3-complex
\begin{center}$0\rightarrow d\mathbb{Z}_p/d\mathbb{Z}\rightarrow\mathbb{Z}_p/\mathbb{Z}\stackrel{1}\rightarrow\mathbb{Z}_p/\mathbb{Z}\rightarrow0$.\end{center}
Thus we conclude that \begin{center}$\mathrm{H}^1_{2,(p)}(M)=\mathbb{Z}_p/(d\mathbb{Z}_p+\mathbb{Z})=\mathrm{H}^2_{1,(p)}(M)$, \end{center}
 \begin{center} $\mathrm{H}^0_{1,(p)}(M)=\mathrm{H}^0_{2,(p)}(M)=0
 =\mathrm{H}^1_{1,(p)}(M)=\mathrm{H}^2_{2,(p)}(M)$.\end{center}

 \textbf{Case 3.} If $M=\mathbb{Z}$, then applying $\Gamma_{(p)}(-)$
to the resolution above yields the 3-complex
\begin{center}$0\rightarrow 0\rightarrow\mathbb{Z}_p/\mathbb{Z}\stackrel{1}\rightarrow\mathbb{Z}_p/\mathbb{Z}\rightarrow0$.\end{center}
 Therefore, one has that \begin{center}$\mathrm{H}^1_{2,(p)}(M)=\mathbb{Z}_p/\mathbb{Z}=\mathrm{H}^2_{1,(p)}(M)$,\end{center}
 \begin{center} $\mathrm{H}^0_{1,(p)}(M)=\mathrm{H}^0_{2,(p)}(M)=0=\mathrm{H}^1_{1,(p)}(M)=\mathrm{H}^2_{2,(p)}(M)$.\end{center}}
\end{exa}

Following \cite{S}, an inverse system $\{M_i\}_{i\in\mathbb{N}}$ of abelian groups, with transition maps $p_{j,i}: M_j\rightarrow M_i$,
is called pro-zero if for every $i$ there exists $j\geq i$ such that $p_{j,i}$ is zero.

\begin{df}\label{lem:3.1} {\rm (1) Let $\emph{\textbf{x}}=x_1,\cdots,x_d$ be a sequence of elements in $R$. The sequence $\emph{\textbf{x}}$ is called a weakly
proregular sequence if for every $i<0$ and $t=1,\cdots,N-1$ the inverse system $\{\mathrm{H}^i_t(K^\bullet(\emph{\textbf{x}}^s;R))\}_{s\in\mathbb{N}}$ is pro-zero.

(2) An ideal $\mathfrak{a}$ of $R$ is called a weakly proregular ideal if it is generated by some weakly proregular sequence.}
\end{df}

Let $x$ be an element in $R$. For any $s>0$, we have a morphism of $N$-complexes
\begin{center} $\xymatrix@C=20pt@R=18pt{
K^\bullet(x^s;R)\ar[d]^{\gamma^s}:0\ar[r]  &R\ar[d]\ar@{=}[r]&R\ar[d]\ar@{=}[r] &\cdots\ar@{=}[r] &R\ar[r]^{x^s}\ar[d]&R\ar[r]\ar[d]^{\pi^s}&0 \\
D^0_1(R/(x^s)):0\ar[r]  &0\ar[r] &0\ar[r] &\cdots\ar[r] &0\ar[r]& R/(x^s)\ar[r]&0}$
\end{center}
which induces the following morphism of inverse systems\begin{equation}
\begin{split} \label{diag02}\xymatrix@C=20pt@R=17pt{
\cdots \ar[r]&K^\bullet(x^3;R)\ar[d]^{\gamma^3}\ar[r]  & K^\bullet(x^2;R) \ar[d]^{\gamma^2}\ar[r]& K^\bullet(x;R)\ar[d]^{\gamma^1} \\
\cdots \ar[r]&R/(x^3)\ar[r]& R/(x^2)\ar[r] & R/(x)}\end{split}
\end{equation}where $R/(x^s)$ is viewed as the $N$-complex $D^0_1(R/(x^s))$.
Let $X$ be an $N$-complex. (4.1) yields a morphism of direct systems:
\begin{center} $\xymatrix@C=15pt@R=18pt{
\mathrm{Hom}_R(R/(x),X) \ar[d]\ar[r]&\mathrm{Hom}_R(R/(x^2),X)\ar[d]\ar[r] & \mathrm{Hom}_R(R/(x^3),X)\ar[d]\ar[r] & \cdots\\
\mathrm{Hom}_R(K^\bullet(x;R),X)\ar[r]&\mathrm{Hom}_R(K^\bullet(x^2;R),X)\ar[r]& \mathrm{Hom}_R(K^\bullet(x^3;R),X)\ar[r] & \cdots}$
\end{center}
This gives rise to a functorial morphism of $N$-complexes
\begin{center}$\delta_{x,X}:\underrightarrow{\textrm{lim}}_{s>0}\mathrm{Hom}_R(R/(x^s),X)
\rightarrow\underrightarrow{\textrm{lim}}_{s>0}\mathrm{Hom}_R(K^\bullet(x^s;R),X)
\cong\check{C}(x;R)\otimes_RX$.\end{center}

Let $\emph{\textbf{x}}=x_1,\cdots,x_d$ be elements in $R$. The Koszul
$N$-complex on $\emph{\textbf{x}}^s$
is the $N$-complex $K^\bullet(\emph{\textbf{x}}^s;R)$. This is equipped with a morphism of $N$-complexes of $\theta^s:K^\bullet(\emph{\textbf{x}}^s;R)\rightarrow R/(\emph{\textbf{x}}^s)$. Therefore,
we obtain an inverse system of
$N$-complexes\begin{center}$\cdots\rightarrow K^\bullet(\emph{\textbf{x}}^{s+1};R)\rightarrow K^\bullet(\emph{\textbf{x}}^s;R)\rightarrow\cdots\rightarrow K^\bullet(\emph{\textbf{x}};R)$,\end{center}compatible with the morphisms $\theta^s$ and natural maps $R/(\emph{\textbf{x}}^{s+1})\rightarrow R/(\emph{\textbf{x}}^s)$.

The next result provide an explicit formula for computing $\mathrm{R}\Gamma_\mathfrak{a}$.

\begin{thm}\label{lem:3.6}{\it{Let $\textbf{x}=x_1,\cdots,x_d$ be a weakly proregular sequence in $R$ and $\mathfrak{a}$ the ideal generated by $\textbf{x}$. For any $N$-complex $X$, there is a functorial quasi-isomorphism \begin{center}$\delta^\mathrm{R}_{\textbf{x},X}:\mathrm{R}\Gamma_{\mathfrak{a}}(X)\rightarrow \check{C}(\textbf{x};R)\otimes_RX$.\end{center}}}
\end{thm}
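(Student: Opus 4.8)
The plan is to reduce, in two stages, to a statement about a single injective module, where the weakly proregular hypothesis enters directly.

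First I would construct $\delta^{\mathrm R}_{\mathbf x,X}$ and reduce to semi-injective complexes. Choosing a semi-injective resolution $X\xrightarrow{\simeq}I$, one has $\mathrm R\Gamma_{\mathfrak a}(X)\simeq\Gamma_{\mathfrak a}(I)$ since $\mathrm R\Gamma_{\mathfrak a}$ is computed by such resolutions; and since the components of $\check C(\mathbf x;R)$ are localizations of $R$, hence flat, the functor $\check C(\mathbf x;R)\otimes_R-$ preserves quasi-isomorphisms, so $\check C(\mathbf x;R)\otimes_RX\xrightarrow{\simeq}\check C(\mathbf x;R)\otimes_RI$. I therefore \emph{define} $\delta^{\mathrm R}_{\mathbf x,X}$ to be $\delta_{\mathbf x,I}$ transported along these isomorphisms in $\mathbf D_N(R)$, reducing to showing that $\delta_{\mathbf x,I}\colon\Gamma_{\mathfrak a}(I)\to\check C(\mathbf x;R)\otimes_RI$ is a quasi-isomorphism for every semi-injective $I$. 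Here I use $\Gamma_{\mathfrak a}(I)\cong\varinjlim_s\mathrm{Hom}_R(R/(\mathbf x^s),I)$ (cofinality of $(\mathbf x^s)$ and $\mathfrak a^s$, via $\mathfrak a=(\mathbf x)$) and $\check C(\mathbf x;R)\otimes_RI\cong\varinjlim_s\mathrm{Hom}_R(K^\bullet(\mathbf x^s;R),I)$ (Construction \ref{lem:2.1} together with $\mathrm{Hom}_R(K^\bullet(\mathbf x^s;R),R)\otimes_RI\cong\mathrm{Hom}_R(K^\bullet(\mathbf x^s;R),I)$, the modules of $K^\bullet(\mathbf x^s;R)$ being finite free), under which $\delta_{\mathbf x,I}=\varinjlim_s\mathrm{Hom}_R(\theta^s,I)$.

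Second, I would invoke the criterion of Lemma \ref{lem:3.3} for $F=\Gamma_{\mathfrak a}(-)$ and $G=\check C(\mathbf x;R)\otimes_R-$ from $\mathrm{Mod}R$ to $\mathbf C_N(R)$, with $\eta=\delta_{\mathbf x,-}$. Its condition $(1)$ holds uniformly, since $F(M)$ sits in degree $0$ and $G(M)$ in degrees $[0,(N-1)d]$ for every module $M$. Applied to the semi-injective $I$, whose components $I^i$ are injective modules, Lemma \ref{lem:3.3} then reduces the claim to proving that $\delta_{\mathbf x,E}\colon\Gamma_{\mathfrak a}(E)\to\check C(\mathbf x;R)\otimes_RE$ is a quasi-isomorphism for every \emph{injective} $R$-module $E$.

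Third, the heart of the matter. For a single injective $E$, the surjection $\theta^s\colon K^\bullet(\mathbf x^s;R)\onto R/(\mathbf x^s)$ has kernel $W^s$, giving a short exact sequence $0\to W^s\to K^\bullet(\mathbf x^s;R)\to R/(\mathbf x^s)\to0$. Applying the exact functors $\mathrm{Hom}_R(-,E)$ and $\varinjlim_s$ produces $0\to\Gamma_{\mathfrak a}(E)\xrightarrow{\delta_{\mathbf x,E}}\check C(\mathbf x;R)\otimes_RE\to\varinjlim_s\mathrm{Hom}_R(W^s,E)\to0$, so by Proposition \ref{prop:0.1} it suffices that $\varinjlim_s\mathrm{Hom}_R(W^s,E)$ be $N$-acyclic. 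The long exact sequence of Proposition \ref{prop:0.1} on the defining sequence of $W^s$, using that $\theta^s$ is an isomorphism on $\mathrm H^0_t$ (Example \ref{lem:1.4}) and that $R/(\mathbf x^s)$ is concentrated in degree $0$, identifies $\mathrm H^i_t(W^s)\cong\mathrm H^i_t(K^\bullet(\mathbf x^s;R))$ for $i<0$ and $\mathrm H^i_t(W^s)=0$ otherwise; by Definition \ref{lem:3.1} each system $\{\mathrm H^i_t(W^s)\}_s$ is pro-zero. Finally, injectivity of $E$ makes $\mathrm{Hom}_R(-,E)$ exact and yields, naturally in $W$, isomorphisms $\mathrm H^j_t(\mathrm{Hom}_R(W,E))\cong\mathrm{Hom}_R(\mathrm H^{-j}_{N-t}(W),E)$; naturality turns the pro-zero systems into direct systems with eventually-zero transition maps, whence $\mathrm H^j_t(\varinjlim_s\mathrm{Hom}_R(W^s,E))=\varinjlim_s\mathrm{Hom}_R(\mathrm H^{-j}_{N-t}(W^s),E)=0$.

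I expect the main obstacle to be this last step: establishing the duality $\mathrm H^j_t(\mathrm{Hom}_R(W,E))\cong\mathrm{Hom}_R(\mathrm H^{-j}_{N-t}(W),E)$ in the $N$-complex setting, with its several cohomologies $\mathrm H^\bullet_t$ and the $q$-twisted differential, and checking it is natural enough to carry pro-zero to a vanishing colimit. The reduction to a single injective module through Lemma \ref{lem:3.3} is precisely what makes this tractable, since dualizing into one injective module keeps $\mathrm{Hom}_R(-,E)$ exact; the remaining bookkeeping—cofinality of $(\mathbf x^s)$ with $\mathfrak a^s$, the finite-free identification of $\check C(\mathbf x;R)\otimes_RE$ with $\varinjlim_s\mathrm{Hom}_R(K^\bullet(\mathbf x^s;R),E)$, and the compatibility of $\eta_I$ with $\delta_{\mathbf x,I}$—is routine.
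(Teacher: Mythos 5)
Your proof is correct, and its core mechanism coincides with the paper's: both reduce to a single injective module via Lemma \ref{lem:3.3}, both use the duality $\mathrm{H}^j_t(\mathrm{Hom}_R(-,E))\cong\mathrm{Hom}_R(\mathrm{H}^{-j}_{N-t}(-),E)$ for injective $E$, and both convert the pro-zero condition into a vanishing filtered colimit. The structural difference is in the decomposition: the paper runs this argument only for $d=1$ (using $\mathrm{Ker}\gamma^s$, which is your $W^s$ in the case of a single element) and then handles $d>1$ by induction, passing through Hom-tensor adjointness and the factorization $R/(\mathbf{x}^s)\cong R/(\mathbf{y}^s)\otimes_R R/(x_d^s)$; you instead treat the entire sequence at once, taking $W^s=\ker\bigl(\theta^s:K^\bullet(\mathbf{x}^s;R)\to R/(\mathbf{x}^s)\bigr)$ for general $d$ and feeding the full system $\{\mathrm{H}^i_t(K^\bullet(\mathbf{x}^s;R))\}_s$ into the pro-zero hypothesis. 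Your one-shot version is arguably the more robust one: Definition \ref{lem:3.1} is a condition on the whole sequence $\mathbf{x}$, and it is not automatic that $x_1$ alone, or the truncation $\mathbf{y}=x_1,\cdots,x_{d-1}$, is again weakly proregular, yet the paper's base case and inductive step each invoke exactly that; your argument uses the hypothesis precisely as stated and sidesteps this issue (this is also how the $N=2$ case is handled in Schenzel and Porta--Shaul--Yekutieli). What the paper's induction buys in exchange is that the only cohomological computation needed is the very explicit one for a single element, with the multi-variable combinatorics absorbed into adjunction; your route requires the (routine but slightly longer) long-exact-sequence bookkeeping identifying $\mathrm{H}^i_t(W^s)$ with $\mathrm{H}^i_t(K^\bullet(\mathbf{x}^s;R))$ for $i<0$, which you carry out correctly.
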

\begin{proof} If $d=1$,
 then by construction of $\gamma^s$, we have a quasi-isomorphism of $N$-complexes
\begin{center} $\xymatrix@C=23pt@R=20pt{
 H_s:\ 0\ar[r]  &(0:_Rx^s_1)\ar@{^{(}->}[d]\ar@{=}[r]&(0:_Rx^s_1)\ar@{^{(}->}[d]\ar@{=}[r] &\cdots\ar@{=}[r] &(0:_Rx^s_1)\ar[r]\ar@{^{(}->}[d]&0\ar[r]\ar[d]&0 \\
\mathrm{Ker}\gamma^s:\ 0\ar[r]  &R\ar@{=}[r]&R\ar@{=}[r] &\cdots\ar@{=}[r] &R\ar[r]^{x^s_1}&(x^s_1)\ar[r]&0 }$
\end{center}Write $F(Y):=\Gamma_{(x_1)}(Y)$ and $G(Y):=\underrightarrow{\textrm{lim}}_{s>0}\mathrm{Hom}_R(K^\bullet(x^s_1;R),Y)$ for any $N$-complex $Y$. Let $I$ be a semi-injective $N$-complex. It is enough to
 show that $\delta_{x_1,I}:F(I)\rightarrow G(I)$ is a quasi-isomorphism. By Lemma \ref{lem:3.3} we
may assume that $I$ is a single injective module.
 For each $j\in\mathbb{Z}$ and $t=1,\cdots,N-1$, one has that \begin{center}$\begin{aligned}\mathrm{H}^j_t(\underrightarrow{\textrm{lim}}_{s>0}\mathrm{Hom}_R(\mathrm{Ker}\gamma^s,I))
 &\cong\underrightarrow{\textrm{lim}}_{s>0}\mathrm{H}^j_t(\mathrm{Hom}_R(H_s,I))\\
 &\cong\underrightarrow{\textrm{lim}}_{s>0}\mathrm{Hom}_R(\mathrm{H}^{-j}_{N-t}(H_s),I)\\
&\cong\underrightarrow{\textrm{lim}}_{s>0}\mathrm{Hom}_R(\mathrm{H}^{-j}_{N-t}(K^\bullet(x_1;R)),I).
\end{aligned}$\end{center} Since $x_1$ is weakly proresular, it follows that $\underrightarrow{\textrm{lim}}_{s>0}\mathrm{Hom}_R(\mathrm{H}^{-j}_{N-t}(K^\bullet(x_1;R)),I)=0$
for $j>0$ and $t=1,\cdots,N-1$. Hence $\underrightarrow{\textrm{lim}}_{s>0}\mathrm{Hom}_R(\mathrm{Ker}\gamma^s,I)$ is acyclic, and so $F(I)\cong G(I)$ in $\mathbf{D}_N(R)$. Now assume $d>1$ and proceed by induction on $d$. Let $X\xrightarrow{\simeq}I$ be a semi-injective resolution and
set $\emph{\textbf{y}}=x_1,\cdots,x_{d-1}$. One has the following isomorphisms
 \begin{center}$\begin{aligned}\mathrm{R}\Gamma_{\mathfrak{a}}(X)
&\cong\underrightarrow{\textrm{lim}}_{s>0}\mathrm{Hom}_R(R/(\emph{\textbf{x}}^s),I)\\
&\cong\underrightarrow{\textrm{lim}}_{s>0}\mathrm{Hom}_R(R/(\emph{\textbf{y}}^s)\otimes_RR/(x_d^s),I)\\
&\cong\underrightarrow{\textrm{lim}}_{s>0}\mathrm{Hom}_R(R/(\emph{\textbf{y}}^s),\underrightarrow{\textrm{lim}}_{s>0}\mathrm{Hom}_R(R/(x_d^s),I))\\
&\cong\underrightarrow{\textrm{lim}}_{s>0}\mathrm{Hom}_R(K^\bullet(\emph{\textbf{y}}^s;R),\underrightarrow{\textrm{lim}}_{s>0}\mathrm{Hom}_R(K^\bullet(x_d^s;R),I))\\
&\cong\underrightarrow{\textrm{lim}}_{s>0}\mathrm{Hom}_R(K^\bullet(\emph{\textbf{y}}^s;R)\otimes_RK^\bullet(x_d^s;R),I)\\
&\cong\underrightarrow{\textrm{lim}}_{s>0}\mathrm{Hom}_R(K^\bullet(\emph{\textbf{x}}^s;R),I)\\
&\cong\check{C}(\emph{\textbf{x}};R)\otimes_RX,\end{aligned}$\end{center}
 where the second one holds as $R/(\emph{\textbf{x}}^s)\cong R/(\emph{\textbf{y}}^s)\otimes_RR/(x_d^s)$, the third one is Hom-tensor adjointness, the fourth one is by induction, as claimed.
\end{proof}

\begin{cor}\label{lem:3.8}{\it{Let $\textbf{x}=x_1,\cdots,x_d$ be a set of generators for a weakly proregular ideal $\mathfrak{a}$, and let $X$ be an $N$-complex.

$\mathrm{(1)}$ The morphism
$\sigma^\mathrm{R}_{\mathrm{R}\Gamma_\mathfrak{a}(X)}:
\mathrm{R}\Gamma_\mathfrak{a}(\mathrm{R}\Gamma_\mathfrak{a}(X))\rightarrow\mathrm{R}\Gamma_\mathfrak{a}(X)$
is an isomorphism. Thus the functor
$\mathrm{R}\Gamma_\mathfrak{a}:\mathbf{D}_N(R)\rightarrow \mathbf{D}_N(R)$
is idempotent.

$\mathrm{(2)}$ There is a natural isomorphism $\mathrm{H}^i_{t,\mathfrak{a}}(X)\cong\mathrm{\check{H}}^i_t(\textbf{x};X)$.}}
\end{cor}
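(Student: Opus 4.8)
The plan is to deduce both assertions from the explicit formula of Theorem~\ref{lem:3.6} together with the idempotence of the \v{C}ech $N$-complex recorded in Lemma~\ref{lem:3.7}, so that every occurrence of $\mathrm{R}\Gamma_\mathfrak{a}$ is replaced by tensoring with $\check{C}(\textbf{x};R)$.

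Part~(2) I expect to be immediate. By Theorem~\ref{lem:3.6} the morphism $\delta^{\mathrm{R}}_{\textbf{x},X}\colon\mathrm{R}\Gamma_\mathfrak{a}(X)\to\check{C}(\textbf{x};R)\otimes_R X=\check{C}(\textbf{x};X)$ is a quasi-isomorphism, hence induces an isomorphism on each cohomology $\mathrm{H}^i_t$. Unwinding the definitions of the two invariants then yields
\[
\mathrm{H}^i_{t,\mathfrak{a}}(X)=\mathrm{H}^i_t(\mathrm{R}\Gamma_\mathfrak{a}(X))\cong\mathrm{H}^i_t(\check{C}(\textbf{x};X))=\mathrm{\check{H}}^i_t(\textbf{x};X),
\]
with naturality in $X$ coming from the functoriality of $\delta^{\mathrm{R}}_{\textbf{x},X}$.

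For part~(1), write $C=\check{C}(\textbf{x};R)$. First I would apply Theorem~\ref{lem:3.6} to the $N$-complex $\mathrm{R}\Gamma_\mathfrak{a}(X)$ and then to $X$, and use that $C$ is semi-flat together with associativity of the tensor product, to obtain isomorphisms in $\mathbf{D}_N(R)$
\[
\mathrm{R}\Gamma_\mathfrak{a}(\mathrm{R}\Gamma_\mathfrak{a}(X))\simeq C\otimes_R\mathrm{R}\Gamma_\mathfrak{a}(X)\simeq C\otimes_R C\otimes_R X.
\]
Under this identification the morphism $\sigma^{\mathrm{R}}_{\mathrm{R}\Gamma_\mathfrak{a}(X)}$ should become $(e\otimes 1_C)\otimes 1_X\colon C\otimes_R C\otimes_R X\to C\otimes_R X$, where $e\colon C\to R$ is the augmentation of Lemma~\ref{lem:3.7}. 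By Lemma~\ref{lem:3.7} the map $e\otimes 1_C\colon C\otimes_R C\to C$ is a quasi-isomorphism, so its mapping cone $E$ is acyclic; as $E$ is assembled from localizations it is semi-flat, whence $E\otimes_R X$ is again acyclic. By Lemma~\ref{lem:0.3}(3) this complex $E\otimes_R X$ is the mapping cone of $(e\otimes 1_C)\otimes 1_X$, so the latter is a quasi-isomorphism; thus $\sigma^{\mathrm{R}}_{\mathrm{R}\Gamma_\mathfrak{a}(X)}$ is an isomorphism in $\mathbf{D}_N(R)$. Because all the identifications are natural in $X$, idempotence of $\mathrm{R}\Gamma_\mathfrak{a}$ follows.

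The hard part will be the sentence ``should become'': verifying that $\delta^{\mathrm{R}}_{\textbf{x},X}$ intertwines the morphism $\sigma^{\mathrm{R}}$ of Proposition~\ref{lem:2.0} with the \v{C}ech augmentation $e\otimes 1$, i.e.\ that the square relating $\sigma^{\mathrm{R}}_X$ and $e\otimes 1_X$ along $\delta^{\mathrm{R}}_{\textbf{x},X}$ commutes in $\mathbf{D}_N(R)$. I would establish this by tracing both maps back to their constructions: $\sigma^{\mathrm{R}}$ comes from the inclusion $\Gamma_\mathfrak{a}\hookrightarrow\mathrm{id}$ computed on a semi-injective resolution, while $e$ is the degree-zero augmentation, and the two are matched using the compatibility of $\delta^{\mathrm{R}}$ with the structure morphisms $\theta^s\colon K^\bullet(\textbf{x}^s;R)\to R/(\textbf{x}^s)$ that define $\mathrm{R}\Gamma_\mathfrak{a}$ in the proof of Theorem~\ref{lem:3.6}. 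Once this commutativity is secured, the remaining manipulations are formal.
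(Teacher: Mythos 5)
The paper states this corollary without proof, treating it as an immediate consequence of Theorem \ref{lem:3.6} and Lemma \ref{lem:3.7}, and your argument is exactly the intended one (it is the same route taken in \cite{PSY} for ordinary complexes): part (2) follows directly from the quasi-isomorphism $\delta^{\mathrm{R}}_{\textbf{x},X}$, and part (1) reduces to the quasi-isomorphism $e\otimes 1$ of Lemma \ref{lem:3.7} together with semi-flatness of the \v{C}ech $N$-complex. The one genuinely nontrivial step is the compatibility square identifying $\sigma^{\mathrm{R}}_{\mathrm{R}\Gamma_\mathfrak{a}(X)}$ with $(e\otimes 1_C)\otimes 1_X$ under the identification of Theorem \ref{lem:3.6}; you have correctly isolated it, and the verification via the structure maps $\theta^s$ goes through as you describe.
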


\section{\bf The Telescope $N$-complexes and derived completion}
This section introduces the notion of Telescope $N$-complexes and explicits the derived completion functor in $\mathbf{D}_N(R)$ using it.

\begin{df}\label{lem:4.0}{\rm Let $\{e_i\hspace{0.03cm}|\hspace{0.03cm}i\geq 0\}$ be the basis of the countably generated free $R$-module $\bigoplus_{i=0}^\infty R$. Given an element $x\in R$, define the morphism $v:D^0_1(\bigoplus_{i=0}^\infty R)\rightarrow D^0_1(\bigoplus_{i=0}^\infty R)$ of $N$-complexes by
\begin{center}$v(e_i)=\bigg\{\begin{array}{cc}
  \hspace{-1.45cm}e_0             & \textrm{if}\ i=0 \\
  e_{i-1}-xe_i             & \hspace{0.1cm}\textrm{if}\ i\geq1.\\
\end{array}$\end{center} The telescope $N$-complex $\mathrm{Tel}(x;R)$ is the $N$-complex $\Sigma^{-1}C(v)$
\begin{center}$0\rightarrow\bigoplus_{i=0}^\infty R\stackrel{v}\rightarrow \bigoplus_{i=0}^\infty R\stackrel{1}\rightarrow \bigoplus_{i=0}^\infty R\rightarrow\cdots\stackrel{1}\rightarrow \bigoplus_{i=0}^\infty R\rightarrow0$\end{center}concentrated in degrees $0,1,\cdots,N-1$. Given a sequence $\emph{\textbf{x}}=x_1,\cdots,x_d$ in $R$, we define
\begin{center}$\mathrm{Tel}(\emph{\textbf{x}};R):=\mathrm{Tel}(x_1;R)\otimes_R\cdots\otimes_R\mathrm{Tel}(x_d;R)$.\end{center}Then $\mathrm{Tel}(\emph{\textbf{x}};R)$ is an $N$-complex of free $R$-modules.}
\end{df}

\begin{lem}\label{lem:4.1}{\it{Let $\textbf{x}=x_1,\cdots,x_d$ be a sequence in $R$. One has a quasi-isomorphism
\begin{center}$w_\textbf{x}:\mathrm{Tel}(\textbf{x};R)\stackrel{\simeq}\rightarrow\check{C}(\textbf{x};R)$.\end{center}}}
\end{lem}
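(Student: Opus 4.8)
The plan is to reduce to the single–element case $d=1$ and then propagate by multiplicativity, since $\mathrm{Tel}(\textbf{x};R)=\mathrm{Tel}(x_1;R)\otimes_R\cdots\otimes_R\mathrm{Tel}(x_d;R)$ by definition and $\check{C}(\textbf{x};R)\cong\check{C}(x_1;R)\otimes_R\cdots\otimes_R\check{C}(x_d;R)$ by Construction \ref{lem:2.1}. For a single $x$ I would define $w_x$ degreewise: in degree $0$ the map $\bigoplus_{i\geq0}R\to R$ sending $e_0\mapsto1$ and $e_i\mapsto0$ for $i\geq1$, and in each degree $1\leq n\leq N-1$ the map $\phi\colon\bigoplus_{i\geq0}R\to R_x$, $e_i\mapsto1/x^i$. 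A direct check gives $\phi\circ v=\iota\circ w_x^0$ (both send $e_0\mapsto1/1$ and $e_i\mapsto0$ for $i\geq1$), so $w_x$ commutes with all the differentials and is a genuine morphism in $\mathbf{C}_N(R)$; for general $\textbf{x}$ I then set $w_\textbf{x}=w_{x_1}\otimes_R\cdots\otimes_R w_{x_d}$.

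The heart of the argument is that $w_x$ is a quasi-isomorphism when $d=1$. Both $\mathrm{Tel}(x;R)$ and $\check{C}(x;R)$ have the same shape: an $N$-complex $0\to A\xrightarrow{\partial}B\xrightarrow{1}B\xrightarrow{1}\cdots\xrightarrow{1}B\to0$ concentrated in degrees $0,\dots,N-1$, with $(A,\partial)=(\bigoplus_{i\geq0}R,\,v)$ in the telescope case and $(A,\partial)=(R,\iota)$ in the $\check{\mathrm{C}}$ech case. I would first compute the cohomology of any such complex directly from the definitions of $\mathrm{Z}^n_t$ and $\mathrm{B}^n_t$: because every differential past degree $0$ is the identity, the only nonzero cohomologies are $\mathrm{H}^0_t(X)\cong\ker\partial$ for $t=1,\dots,N-1$ and $\mathrm{H}^n_{N-n}(X)\cong\coker\partial$ for $n=1,\dots,N-1$. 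This matches Example \ref{lem:3.9} on the $\check{\mathrm{C}}$ech side, and it reduces the problem to showing that the two-term map $(w_x^0,\phi)\colon(\bigoplus_{i\geq0}R\xrightarrow{v}\bigoplus_{i\geq0}R)\to(R\xrightarrow{\iota}R_x)$ is a quasi-isomorphism of ordinary complexes, i.e.\ that it induces isomorphisms on $\ker$ and on $\coker$.

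The main computational point is to identify $\ker v$ and $\coker v$, and I expect this to be the crux. Writing $\tilde v(e_i)=e_i-xe_{i+1}$ for the standard telescope map, one has $v(e_{i+1})=\tilde v(e_i)$ and $v(e_0)=e_0$, whence $\im v=Re_0+\im\tilde v$. Since $\tilde v$ is injective with $\coker\tilde v\cong R_x$ (via $\bar e_i\mapsto1/x^i$), it follows that $\coker v\cong R_x/\iota(R)=\coker\iota$, and that this isomorphism is exactly the one induced by $\phi$; a parallel bookkeeping gives $\ker v\cong\Gamma_{(x)}(R)=\ker\iota$, with the isomorphism induced by $w_x^0$. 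Thus $(w_x^0,\phi)$ is a quasi-isomorphism, and by the previous paragraph so is $w_x$. The slightly unusual clause $v(e_0)=e_0$ is precisely what cuts the colimit $R_x$ down to $R_x/\iota(R)$ so as to match the $\check{\mathrm{C}}$ech complex, and the care needed is in checking that the maps induced on $\ker$ and $\coker$ (not merely abstract isomorphisms) are the right ones.

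Finally, for the passage from $d=1$ to general $d$ I would argue one tensor factor at a time. Each $\mathrm{Tel}(x_i;R)$ is a bounded $N$-complex of free modules and each $\check{C}(x_i;R)$ is a bounded $N$-complex of flat modules ($R$ and $R_{x_i}$), so all of them are semi-flat; consequently tensoring a quasi-isomorphism with any of these factors preserves quasi-isomorphisms, exactly as in the proof of Lemma \ref{lem:3.7}. Replacing the factors $\mathrm{Tel}(x_i;R)$ by $\check{C}(x_i;R)$ successively through the quasi-isomorphisms $w_{x_i}$ then shows that $w_\textbf{x}=w_{x_1}\otimes_R\cdots\otimes_R w_{x_d}$ is a quasi-isomorphism, completing the proof.
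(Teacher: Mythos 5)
Your proof is correct and follows essentially the same route as the paper's: treat the case $d=1$ first, then pass to general $\textbf{x}$ by tensoring the single-variable quasi-isomorphisms together, using that every factor is a bounded $N$-complex of flat modules (hence semi-flat, as in the proof of Lemma \ref{lem:3.7}). The only difference is that you establish the $d=1$ case by an explicit computation of the kernel and cokernel of $v$ (correctly identifying them with $\Gamma_{(x)}(R)$ and $R_x/\iota(R)$ via $w_x^0$ and $\phi$), whereas the paper obtains that step by citing \cite[Lemma 5.7]{PSY} and extending by identity maps in degrees $1,\cdots,N-1$.
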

\begin{proof} For any $x_j$, by \cite[Lemma 5.7]{PSY}, one can define a quasi-isomorphism of $N$-complexes
\begin{center} $\xymatrix@C=23pt@R=20pt{
\mathrm{Tel}(x_j;R)\ar[d]^{w_{x_j}}:\ 0\ar[r]& \bigoplus_{i=0}^\infty R \ar[d]^{w_{x_j}^0}\ar[r]^v& \bigoplus_{i=0}^\infty R\ar[d]^{w_{x_j}^1}\ar@{=}[r] &\cdots\ar@{=}[r]&\bigoplus_{i=0}^\infty R\ar[d]^{w_{x_j}^{N-1}}\ar[r] &0 \\
\check{C}(x_j;R):\ 0\ar[r]& R \ar[r]^\iota& R_{x_j}\ar@{=}[r] &\cdots\ar@{=}[r]&R_{x_j}\ar[r] &0 }$
\end{center}where $w_{x_j}^1=\cdots=w_{x_j}^{N-1}$. Therefore, we have
 \begin{center}$\mathrm{Tel}(\emph{\textbf{x}};R)\cong\mathrm{Tel}(x_1;R)\otimes_R\cdots
 \otimes_R\mathrm{Tel}(x_d;R)\stackrel{\simeq}\rightarrow\check{C}(x_1;R)\otimes_R\cdots
 \otimes_R\check{C}(x_d;R)\cong\check{C}(\emph{\textbf{x}};R)$.
\end{center}This shows the quasi-isomorphism we seek.
\end{proof}

\begin{cor}\label{lem:4.4}{\it{Let $\textbf{x}$ and $\textbf{y}$ be two finite sequences in $R$ and let $\mathfrak{a}=(\textbf{x})$ and $\mathfrak{b}=(\textbf{y})$. If $\sqrt{\mathfrak{a}}=\sqrt{\mathfrak{b}}$, then $\mathrm{Tel}(\textbf{x};R)$ and $\mathrm{Tel}(\textbf{y};R)$ are homotopy equivalent.}}
\end{cor}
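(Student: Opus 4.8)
The plan is to reduce the statement to an isomorphism of $\check{\mathrm{C}}$ech $N$-complexes in $\mathbf{D}_N(R)$, and then to upgrade it to a homotopy equivalence using the fact that telescope $N$-complexes are semi-projective. Indeed, each $\mathrm{Tel}(x_i;R)$ is concentrated in degrees $0,\dots,N-1$ with free terms, so $\mathrm{Tel}(\textbf{x};R)=\mathrm{Tel}(x_1;R)\otimes_R\cdots\otimes_R\mathrm{Tel}(x_d;R)$ is a bounded $N$-complex of free modules. Building it up from its stupid truncations $\tau_{>i},\tau_{\leq i}$, and using that the one-term complexes $D^j_1(P)$ with $P$ projective are semi-projective together with the stability of semi-projectivity under (degreewise split) extensions, one sees that $\mathrm{Tel}(\textbf{x};R)$ and $\mathrm{Tel}(\textbf{y};R)$ are semi-projective. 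For semi-projective $P,Q$ the canonical map $\mathrm{Hom}_{\mathbf{K}_N(R)}(P,Q)\to\mathrm{Hom}_{\mathbf{D}_N(R)}(P,Q)$ is an isomorphism, so any isomorphism between them in $\mathbf{D}_N(R)$ is a homotopy equivalence. By Lemma \ref{lem:4.1} we have $\mathrm{Tel}(\textbf{x};R)\cong\check{C}(\textbf{x};R)$ and $\mathrm{Tel}(\textbf{y};R)\cong\check{C}(\textbf{y};R)$ in $\mathbf{D}_N(R)$, so it suffices to produce an isomorphism $\check{C}(\textbf{x};R)\cong\check{C}(\textbf{y};R)$ in $\mathbf{D}_N(R)$.

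Next I would show that the $\check{\mathrm{C}}$ech $N$-complex depends only on the radical. The key step is: if $z\in\sqrt{(\textbf{x})}$, then the natural map $\check{C}(\textbf{x},z;R)\to\check{C}(\textbf{x};R)$ is a quasi-isomorphism. Using the exact triangle $\Sigma^{-1}\check{C}(\textbf{x};R)_{z}\to\check{C}(\textbf{x},z;R)\to\check{C}(\textbf{x};R)\xrightarrow{\iota}\check{C}(\textbf{x};R)_{z}$ from Construction \ref{lem:2.1}, the cone of this map is $\check{C}(\textbf{x};R)_{z}=\check{C}(\textbf{x};R)\otimes_R R_z$, so it is enough to show this localization is $N$-acyclic. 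Since $z\in\sqrt{(\textbf{x})}$, some power $z^m$ lies in $(\textbf{x})$, whence the images of $x_1,\dots,x_d$ generate the unit ideal of $R_z$; and $\check{C}(\textbf{x};R)\otimes_R R_z\cong\check{C}(\textbf{x};R_z)$ by base change. Granting the Unit Ideal Lemma below, this localization is acyclic. Applying the key step repeatedly, adding the $y_j$ to $\textbf{x}$ one at a time (legitimate since each $y_j\in\sqrt{(\textbf{x})}\subseteq\sqrt{(\textbf{x},y_1,\dots,y_{j-1})}$), yields $\check{C}(\textbf{x},\textbf{y};R)\xrightarrow{\simeq}\check{C}(\textbf{x};R)$, and symmetrically $\check{C}(\textbf{y},\textbf{x};R)\xrightarrow{\simeq}\check{C}(\textbf{y};R)$. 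As $\check{C}(\textbf{x},\textbf{y};R)\cong\check{C}(\textbf{y},\textbf{x};R)$ by commutativity of the tensor product, the desired isomorphism $\check{C}(\textbf{x};R)\cong\check{C}(\textbf{y};R)$ in $\mathbf{D}_N(R)$ follows.

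The main obstacle is the Unit Ideal Lemma: if $(\textbf{x})=R$ then $\check{C}(\textbf{x};R)$ is $N$-acyclic. I would prove it by localizing at an arbitrary maximal ideal $\mathfrak{m}$. Since $N$-acyclicity can be checked locally (each cohomology $\mathrm{H}^n_t$ commutes with the flat base change $-\otimes_R R_{\mathfrak{m}}$, and a module vanishing at every maximal ideal is zero), it suffices to show that $\check{C}(\textbf{x};R)\otimes_R R_{\mathfrak{m}}\cong\check{C}(\textbf{x};R_{\mathfrak{m}})$ is acyclic. In the local ring $R_{\mathfrak{m}}$ the equality $(\textbf{x})R_{\mathfrak{m}}=R_{\mathfrak{m}}$ forces some $x_i$ to be a unit; for a unit $x_i$ the map $\iota$ is an isomorphism and $\check{C}(x_i;R_{\mathfrak{m}})\cong D^{N-1}_N(R_{\mathfrak{m}})$, which is contractible (a direct computation gives a contracting homotopy with $s^{N-1}=\mathrm{id}$ and all other $s^n=0$). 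Since $\check{C}(\textbf{x};R_{\mathfrak{m}})=\check{C}(x_1;R_{\mathfrak{m}})\otimes_R\cdots\otimes_R\check{C}(x_d;R_{\mathfrak{m}})$ has a contractible tensor factor and $-\otimes_R(-)$ is a triangle functor on $\mathbf{K}_N(R_{\mathfrak{m}})$, the whole complex is contractible, hence acyclic. This establishes the lemma.

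Assembling the pieces, the isomorphism $\check{C}(\textbf{x};R)\cong\check{C}(\textbf{y};R)$ in $\mathbf{D}_N(R)$ combines with Lemma \ref{lem:4.1} to give an isomorphism $\mathrm{Tel}(\textbf{x};R)\cong\mathrm{Tel}(\textbf{y};R)$ in $\mathbf{D}_N(R)$, which by semi-projectivity is realized by a homotopy equivalence in $\mathbf{K}_N(R)$. I expect the only genuinely delicate points to be the verification that bounded $N$-complexes of projectives are semi-projective and the base-change bookkeeping for the $q$-twisted tensor product of $N$-complexes; everything else is formal triangulated-category manipulation.
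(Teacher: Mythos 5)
Your proposal follows the same skeleton as the paper's proof: pass from telescopes to $\check{\mathrm{C}}$ech $N$-complexes via the quasi-isomorphism $w_{\textbf{x}}$ of Lemma \ref{lem:4.1}, establish $\check{C}(\textbf{x};R)\cong\check{C}(\textbf{y};R)$ in $\mathbf{D}_N(R)$, and upgrade the resulting derived-category isomorphism of telescopes to a homotopy equivalence. The difference is one of substance: the paper's proof simply asserts both the isomorphism $\check{C}(\textbf{x};R)\cong\check{C}(\textbf{y};R)$ and the final upgrade to a homotopy equivalence without justification, whereas you actually prove them, and your arguments are sound. The radical-invariance via the Unit Ideal Lemma is correct: localizing at a maximal ideal makes some $x_i$ a unit, $\check{C}(x_i;R_{\mathfrak{m}})\cong D^{N-1}_N(R_{\mathfrak{m}})$ is contractible with the homotopy $s^{N-1}=\mathrm{id}$ (one checks $\sum_{i}d^{N-1-i}s^{n+i}d^{i}=d^{n}\circ\mathrm{id}\circ d^{N-1-n}=\mathrm{id}$ in each degree), and a contractible tensor factor kills the whole complex; likewise the observation that bounded complexes of projectives are semi-projective, so that $\mathbf{D}_N(R)$-isomorphisms between them lift to $\mathbf{K}_N(R)$, is exactly what the paper's last sentence needs. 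The one step I would tighten is the appeal to ``commutativity of the tensor product'' for $\check{C}(\textbf{x},\textbf{y};R)\cong\check{C}(\textbf{y},\textbf{x};R)$: the $q$-twisted tensor product is not symmetric by the naive swap. This is easily avoided: since $\check{C}(\textbf{x},\textbf{y};R)$ decomposes as a tensor product of the factors $\check{C}(x_i;R)$ and $\check{C}(y_j;R)$, tensoring the triangle $\Sigma^{-1}R_{x_i}\rightarrow\check{C}(x_i;R)\rightarrow R\rightarrow R_{x_i}$ with the remaining factors lets you collapse \emph{any} factor, not just the last one, so you may pass directly from $\check{C}(\textbf{x},\textbf{y};R)$ to $\check{C}(\textbf{y};R)$ by deleting the $x_i$ one at a time, each $x_i$ lying in the radical of the ideal generated by the surviving elements.
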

\begin{proof} By Lemma \ref{lem:4.1}, we have two quasi-isomorphisms $\mathrm{Tel}(\emph{\textbf{x}};R)\stackrel{\simeq}\rightarrow\check{C}(\emph{\textbf{x}};R)$ and
$\mathrm{Tel}(\emph{\textbf{y}};R)\stackrel{\simeq}\rightarrow\check{C}(\emph{\textbf{y}};R)$. But $\check{C}(\emph{\textbf{x}};R)\cong \check{C}(\emph{\textbf{y}};R)$ in $\mathbf{D}_N(R)$, it follows that $\mathrm{Tel}(\emph{\textbf{x}};R)\cong\mathrm{Tel}(\emph{\textbf{y}};R)$ in $\mathbf{D}_N(R)$. Consequently,   $\mathrm{Tel}(\emph{\textbf{x}};R)$ and $\mathrm{Tel}(\emph{\textbf{y}};R)$ are homotopy equivalent.
\end{proof}

\begin{cor}\label{lem:4.2}{\it{Let $\textbf{x}=x_1,\cdots,x_d$ be a weakly proregular sequence in $R$ and $\mathfrak{a}$ the ideal generated by $\textbf{x}$. For any $N$-complex $X$, there is a functorial quasi-isomorphism \begin{center}$\mathrm{R}\Gamma_{\mathfrak{a}}(X)\rightarrow \mathrm{Tel}(\textbf{x};R)\otimes_RX$.\end{center}}}
\end{cor}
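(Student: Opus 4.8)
The plan is to combine the two quasi-isomorphisms already established: the explicit computation of $\mathrm{R}\Gamma_{\mathfrak{a}}$ by the \v{C}ech $N$-complex in Theorem \ref{lem:3.6}, and the comparison between the telescope and \v{C}ech $N$-complexes in Lemma \ref{lem:4.1}. First I would record the functorial quasi-isomorphism $\delta^{\mathrm{R}}_{\textbf{x},X}\colon \mathrm{R}\Gamma_{\mathfrak{a}}(X)\to \check{C}(\textbf{x};R)\otimes_R X$ supplied by Theorem \ref{lem:3.6}. Next, starting from the quasi-isomorphism $w_{\textbf{x}}\colon \mathrm{Tel}(\textbf{x};R)\xrightarrow{\simeq}\check{C}(\textbf{x};R)$ of Lemma \ref{lem:4.1}, I would apply $-\otimes_R X$ to obtain a morphism $w_{\textbf{x}}\otimes \mathrm{id}_X\colon \mathrm{Tel}(\textbf{x};R)\otimes_R X\to \check{C}(\textbf{x};R)\otimes_R X$. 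The composite in $\mathbf{D}_N(R)$,
$$\mathrm{R}\Gamma_{\mathfrak{a}}(X)\xrightarrow{\ \delta^{\mathrm{R}}_{\textbf{x},X}\ }\check{C}(\textbf{x};R)\otimes_R X\xleftarrow{\ w_{\textbf{x}}\otimes \mathrm{id}_X\ }\mathrm{Tel}(\textbf{x};R)\otimes_R X,$$
then yields, upon inverting the right-hand arrow, the desired functorial morphism $\mathrm{R}\Gamma_{\mathfrak{a}}(X)\to \mathrm{Tel}(\textbf{x};R)\otimes_R X$.

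The step I expect to carry the real content is showing that $w_{\textbf{x}}\otimes \mathrm{id}_X$ is itself a quasi-isomorphism, so that it becomes invertible in $\mathbf{D}_N(R)$. This is exactly the point where the freeness/flatness of the two model $N$-complexes enters. Both $\mathrm{Tel}(\textbf{x};R)$ (an $N$-complex of free $R$-modules, by Definition \ref{lem:4.0}) and $\check{C}(\textbf{x};R)$ (built from the flat modules $R$ and its localizations $R_x$) are semi-flat $N$-complexes, just as was used for the \v{C}ech $N$-complexes in the proof of Lemma \ref{lem:3.7}. Since tensoring a quasi-isomorphism between semi-flat $N$-complexes with an arbitrary $N$-complex preserves quasi-isomorphisms, $w_{\textbf{x}}\otimes \mathrm{id}_X$ is a quasi-isomorphism; equivalently its mapping cone, which by Lemma \ref{lem:0.3}(3) is $C(w_{\textbf{x}})\otimes_R X$, is $N$-acyclic because $C(w_{\textbf{x}})$ is an $N$-acyclic semi-flat $N$-complex.

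Finally I would note that both arrows in the composite are functorial in $X$: $\delta^{\mathrm{R}}_{\textbf{x},X}$ by Theorem \ref{lem:3.6}, and $w_{\textbf{x}}\otimes \mathrm{id}_X$ because $w_{\textbf{x}}$ is a fixed morphism and $-\otimes_R X$ is functorial. Hence the composite $(w_{\textbf{x}}\otimes \mathrm{id}_X)^{-1}\circ \delta^{\mathrm{R}}_{\textbf{x},X}$ is a functorial quasi-isomorphism, which completes the argument. The only subtlety worth double-checking is the semi-flatness of $\mathrm{Tel}(\textbf{x};R)$ in the $N$-complex sense, rather than merely degreewise flatness; this follows since it is a bounded $N$-complex of free modules.
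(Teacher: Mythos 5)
Your proposal is correct and follows exactly the route the paper intends: the corollary is stated without proof precisely because it is the composite of the quasi-isomorphism $\delta^{\mathrm{R}}_{\textbf{x},X}$ from Theorem \ref{lem:3.6} with the inverse in $\mathbf{D}_N(R)$ of $w_{\textbf{x}}\otimes \mathrm{id}_X$ from Lemma \ref{lem:4.1}, the latter being a quasi-isomorphism since the cone of $w_{\textbf{x}}$ is a bounded $N$-acyclic $N$-complex of flat modules. Your attention to the semi-flatness point is exactly the right justification and matches what the paper already uses in the proof of Lemma \ref{lem:3.7}.
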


Let $\mathfrak{a}$ be an ideal of $R$. We have an inverse system \begin{center}$\cdots\twoheadrightarrow R/\mathfrak{a}^{3}\twoheadrightarrow R/\mathfrak{a}^{2}\twoheadrightarrow R/\mathfrak{a}$.\end{center}Following \cite{GM}, for an $R$-module $M$ we write \begin{center}$\Lambda_\mathfrak{a}(M):=\underleftarrow{\textrm{lim}}_{s>0}(R/\mathfrak{a}^s\otimes_RM)$\end{center}
for the $\mathfrak{a}$-adic completion of $M$. We get an additive functor $\Lambda_\mathfrak{a}:\mathrm{Mod}R\rightarrow
\mathrm{Mod}R$ and there is a functorial morphism $\tau_M:M\rightarrow\Lambda_\mathfrak{a}(M)$ for any $M\in \mathrm{Mod}R$. By \cite[Corollary 3.29]{IKM}, the functor $\Lambda_\mathfrak{a}$  has a left derived functor
 \begin{center}$\mathrm{L}\Lambda_\mathfrak{a}(-):\mathbf{D}_N(R)\rightarrow\mathbf{D}_N(R),
 \xi':\mathrm{L}\Lambda_\mathfrak{a}\rightarrow\Lambda_\mathfrak{a}$\end{center}constructed using semi-projective resolutions. For any $N$-complex $X\in\mathbf{D}_N(R)$, by analogy with Proposition \ref{lem:2.0},  there is a functorial morphism $\tau^\mathrm{L}_X:X\rightarrow\mathrm{L}\Lambda_\mathfrak{a}(X)$ in $\mathbf{D}_N(R)$, such that $\xi'_X\circ\tau^\mathrm{L}_X=\tau_X$ as morphism $X\rightarrow\Lambda_\mathfrak{a}(X)$.

Let $x$ be an element of $R$ and $X$ an $N$-complex. Then the diagram (4.1) yields a morphism of inverse systems:
\begin{center} $\xymatrix@C=15pt@R=17pt{
\cdots \ar[r]& \mathrm{Hom}_R(\mathrm{Hom}_R(K^\bullet(x^2;R),R),X) \ar[d]\ar[r]& \mathrm{Hom}_R(\mathrm{Hom}_R(K^\bullet(x;R),R),X)\ar[d] \\
\cdots \ar[r]& R/(x^2)\otimes_RX\ar[r] & R/(x)\otimes_RX}$
\end{center}
This gives rise to a functorial morphism of $N$-complexes
\begin{center}$\lambda_X:\mathrm{Hom}_R(\mathrm{Tel}(x;R),X)\simeq
\mathrm{Hom}_R(\check{C}(x;R),X)
\rightarrow\underleftarrow{\textrm{lim}}_{s>0}(R/(x^s)\otimes_RY)=\Lambda_{(x)}(X)$.\end{center}

The next results provide an explicit formula for computing $\mathrm{L}\Lambda_\mathfrak{a}$.

\begin{thm}\label{lem:4.3}{\it{Let $\textbf{x}=x_1,\cdots,x_d$ be a weakly proregular sequence in $R$ and $\mathfrak{a}$ the ideal generated by $\textbf{x}$. For any $N$-complex $X$, there is a functorial quasi-isomorphism \begin{center}$\mathrm{Hom}_R(\mathrm{Tel}(\textbf{x};R),X)\stackrel{\simeq}\rightarrow
\mathrm{L}\Lambda_{\mathfrak{a}}(X)$.\end{center}}}
\end{thm}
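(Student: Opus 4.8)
The plan is to dualize the proof of Theorem \ref{lem:3.6}, trading injective resolutions, $\mathrm{Hom}$ into an injective and direct limits for projective resolutions, tensoring with a projective and inverse limits. First I would fix a semi-projective resolution $\alpha\colon P\xrightarrow{\simeq}X$, so that $\mathrm{L}\Lambda_\mathfrak{a}(X)=\Lambda_\mathfrak{a}(P)$ by the very construction of the left derived functor. Since $\mathrm{Tel}(\textbf{x};R)$ is a bounded $N$-complex of free modules it is semi-projective, so by Lemma \ref{lem:0.3}(1) the functor $\mathrm{Hom}_R(\mathrm{Tel}(\textbf{x};R),-)$ sends quasi-isomorphisms to quasi-isomorphisms; applied to $\alpha$ this yields $\mathrm{Hom}_R(\mathrm{Tel}(\textbf{x};R),P)\xrightarrow{\simeq}\mathrm{Hom}_R(\mathrm{Tel}(\textbf{x};R),X)$. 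It therefore suffices to prove that the comparison morphism $\lambda_P\colon\mathrm{Hom}_R(\mathrm{Tel}(\textbf{x};R),P)\to\Lambda_\mathfrak{a}(P)$ is a quasi-isomorphism for semi-projective $P$, where $\lambda$ is the natural transformation assembled from the maps $\gamma^s$ of \eqref{diag02}, exactly as in the single-element construction preceding the statement.

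Next I would reduce to the case of a single module. Setting $F(Y)=\mathrm{Hom}_R(\mathrm{Tel}(\textbf{x};R),Y)$ and $G(Y)=\Lambda_\mathfrak{a}(Y)$, both are additive functors $\mathrm{Mod}R\to\mathbf{C}_N(R)$ whose values lie in the fixed bounded range $\mathbf{C}^{[-(N-1)d,\,0]}_N(R)$ (the former because $\mathrm{Tel}(\textbf{x};R)$ is bounded, the latter because $\Lambda_\mathfrak{a}$ is module-valued), and $\lambda$ is a natural transformation $F\to G$. By Lemma \ref{lem:3.3} it is then enough to check that $\lambda_M$ is a quasi-isomorphism for every component module $M=P^i$; as these are projective and all functors in sight are additive, I may assume $M$ is free.

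For free $M$ the crucial step is to identify $\mathrm{Hom}_R(\mathrm{Tel}(\textbf{x};R),M)$ with a homotopy inverse limit. Using the quasi-isomorphism $w_\textbf{x}$ of Lemma \ref{lem:4.1}, the description $\check{C}(\textbf{x};R)=\underrightarrow{\textrm{lim}}_s\mathrm{Hom}_R(K^\bullet(\textbf{x}^s;R),R)$ from Construction \ref{lem:2.1} together with the self-duality of the Koszul $N$-complexes (Proposition \ref{lem:1.8}), and the biduality $\mathrm{Hom}_R(\mathrm{Hom}_R(K^\bullet(\textbf{x}^s;R),R),M)\cong K^\bullet(\textbf{x}^s;M)$ valid for the degreewise finite free complexes $K^\bullet(\textbf{x}^s;R)$, the telescope computes $\operatorname{holim}_s K^\bullet(\textbf{x}^s;M)$. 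This is precisely the point of introducing the telescope: it realises the \emph{derived} inverse limit and so accounts for the $\underleftarrow{\textrm{lim}}^1$ contributions that the naive limit $\underleftarrow{\textrm{lim}}_s K^\bullet(\textbf{x}^s;M)$ would miss. Under this identification $\lambda_M$ becomes $\operatorname{holim}_s$ of the comparison maps $\gamma^s\otimes_R M\colon K^\bullet(\textbf{x}^s;M)\to M/(\textbf{x}^s)M$.

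It remains to show this homotopy limit of comparison maps is a quasi-isomorphism, and this is where weak proregularity enters and where I expect the main obstacle to be. The cone of $\gamma^s\otimes_R M$ is built from the cohomologies $\mathrm{H}^i_t(K^\bullet(\textbf{x}^s;M))$ with $i<0$; since $M$ is free these are $\mathrm{H}^i_t(K^\bullet(\textbf{x}^s;R))\otimes_R M$, so by Definition \ref{lem:3.1} the inverse systems $\{\mathrm{H}^i_t(K^\bullet(\textbf{x}^s;M))\}_s$ are pro-zero. A pro-zero system has vanishing $\underleftarrow{\textrm{lim}}$ and $\underleftarrow{\textrm{lim}}^1$, hence vanishing homotopy limit, so through the Milnor $\underleftarrow{\textrm{lim}}^1$-sequence, fed by the cohomology long exact sequences of Proposition \ref{prop:0.1}, I would conclude $\operatorname{holim}_s K^\bullet(\textbf{x}^s;M)\xrightarrow{\simeq}\operatorname{holim}_s M/(\textbf{x}^s)M$. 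Finally the tower $\{M/(\textbf{x}^s)M\}_s$ is surjective, hence Mittag--Leffler with $\underleftarrow{\textrm{lim}}^1=0$, so its homotopy limit reduces to $\underleftarrow{\textrm{lim}}_s M/(\textbf{x}^s)M=\Lambda_\mathfrak{a}(M)$ in degree $0$, and $\lambda_M$ is a quasi-isomorphism. The essential difficulty, which is genuinely absent from Theorem \ref{lem:3.6} because there the relevant colimits are exact, is the inexactness of inverse limits: the telescope is the device that converts $\Lambda_\mathfrak{a}$ into a homotopy limit, and weak proregularity is exactly the hypothesis guaranteeing that the Koszul-to-quotient comparison becomes invisible in that homotopy limit.
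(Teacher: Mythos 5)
Your proposal is correct and follows essentially the same route as the paper: both reduce first to a semi-projective $N$-complex, then to a single projective module via Lemma \ref{lem:3.3}, and then handle the module case using weak proregularity. The only difference is that at the last step the paper simply cites \cite[Theorem 5.21]{PSY} for a single element $x_1$ and inducts on $d$, whereas you unpack that citation into the telescope-as-homotopy-limit, pro-zero, and $\underleftarrow{\textrm{lim}}^1$ argument for the whole sequence at once — which is the content of the cited result, supplied in more detail than the paper itself gives.
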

\begin{proof} It is enough to consider a semi-projective $N$-complex $X=P$. By Lemma \ref{lem:3.3} we reduce to the case of a single projective module $P$. By \cite[Theorem 5.21]{PSY}, one can obtain a quasi-isomorphism $\mathrm{Hom}_R(\mathrm{Tel}(x_1;R),P)\stackrel{\simeq}\rightarrow \Lambda_{(x_1)}(P)$ in $\mathbf{D}_N(R)$, where $\Lambda_{(x_1)}(P)$ is viewed as the $N$-complex $D^0_1(\Lambda_{(x_1)}(P))$. By induction, we obtain the quasi-isomorphism we seek.
\end{proof}

\begin{cor}\label{lem:4.4}{\it{Let $\mathfrak{a}$ be a weakly proregular ideal of $R$.

$\mathrm{(1)}$ For any $N$-complex $X$, there exists a functorial quasi-isomorphism \begin{center}$\mathrm{RHom}_R(\mathrm{R}\Gamma_{\mathfrak{a}}(R),X)\stackrel{\simeq}\rightarrow
\mathrm{L}\Lambda_{\mathfrak{a}}(X)$.\end{center}

$\mathrm{(2)}$ The morphism
$\tau^\mathrm{L}_{\mathrm{L}\Lambda_\mathfrak{a}(X)}:
\mathrm{L}\Lambda_\mathfrak{a}(X)\rightarrow\mathrm{L}\Lambda_\mathfrak{a}(\mathrm{L}\Lambda_\mathfrak{a}(X))$
is an isomorphism. Thus the functor
$\mathrm{L}\Lambda_\mathfrak{a}:\mathbf{D}_N(R)\rightarrow \mathbf{D}_N(R)$
is idempotent.}}
\end{cor}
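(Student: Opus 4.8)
The plan for part (1) is to transport both sides onto the telescope $N$-complex and invoke the computations already in hand. First I would apply Theorem \ref{lem:3.6} with $X=D^0_1(R)$ to get a quasi-isomorphism $\mathrm{R}\Gamma_\mathfrak{a}(R)\xrightarrow{\simeq}\check{C}(\textbf{x};R)\otimes_RR\cong\check{C}(\textbf{x};R)$, where $\textbf{x}$ is a weakly proregular generating sequence for $\mathfrak{a}$, and then combine it with the quasi-isomorphism $\mathrm{Tel}(\textbf{x};R)\xrightarrow{\simeq}\check{C}(\textbf{x};R)$ of Lemma \ref{lem:4.1} to conclude $\mathrm{R}\Gamma_\mathfrak{a}(R)\cong\mathrm{Tel}(\textbf{x};R)$ in $\mathbf{D}_N(R)$. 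Since $\mathrm{Tel}(\textbf{x};R)$ is a bounded $N$-complex of free $R$-modules it is semi-projective, so it computes $\mathrm{RHom}$ in the first variable:
\[
\mathrm{RHom}_R(\mathrm{R}\Gamma_\mathfrak{a}(R),X)\cong\mathrm{RHom}_R(\mathrm{Tel}(\textbf{x};R),X)\cong\mathrm{Hom}_R(\mathrm{Tel}(\textbf{x};R),X).
\]
Composing with the quasi-isomorphism $\mathrm{Hom}_R(\mathrm{Tel}(\textbf{x};R),X)\xrightarrow{\simeq}\mathrm{L}\Lambda_\mathfrak{a}(X)$ of Theorem \ref{lem:4.3} then yields the desired functorial quasi-isomorphism, functoriality being inherited from the functoriality of each step.

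For part (2) the strategy is to realize both applications of $\mathrm{L}\Lambda_\mathfrak{a}$ as $\mathrm{Hom}$ out of the telescope complex and collapse the iterated Hom via the adjunction of Remark \ref{lem:0.4}(2). Writing $T:=\mathrm{Tel}(\textbf{x};R)$ and using Theorem \ref{lem:4.3} twice together with the tensor-hom adjunction, I would compute
\[
\mathrm{L}\Lambda_\mathfrak{a}(\mathrm{L}\Lambda_\mathfrak{a}(X))\cong\mathrm{Hom}_R\bigl(T,\mathrm{Hom}_R(T,X)\bigr)\cong\mathrm{Hom}_R(T\otimes_RT,X).
\]
Here all Homs compute the derived functors because $T$, and hence $T\otimes_RT$, is a bounded complex of free modules. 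Now Lemma \ref{lem:4.1} gives $T\otimes_RT\xrightarrow{\simeq}\check{C}(\textbf{x};R)\otimes_R\check{C}(\textbf{x};R)$, and Lemma \ref{lem:3.7} provides a quasi-isomorphism $\check{C}(\textbf{x};R)\otimes_R\check{C}(\textbf{x};R)\xrightarrow{\simeq}\check{C}(\textbf{x};R)\xleftarrow{\simeq}T$. Applying $\mathrm{Hom}_R(-,X)$ to this chain of quasi-isomorphisms between semi-projective $N$-complexes gives $\mathrm{Hom}_R(T\otimes_RT,X)\cong\mathrm{Hom}_R(T,X)\cong\mathrm{L}\Lambda_\mathfrak{a}(X)$, establishing the abstract isomorphism $\mathrm{L}\Lambda_\mathfrak{a}(\mathrm{L}\Lambda_\mathfrak{a}(X))\cong\mathrm{L}\Lambda_\mathfrak{a}(X)$.

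The delicate point, which I expect to be the main obstacle, is to check that this abstract isomorphism is exactly the canonical map $\tau^\mathrm{L}_{\mathrm{L}\Lambda_\mathfrak{a}(X)}$ and not merely \emph{some} isomorphism. To control this I would trace $\tau^\mathrm{L}$ through the identification of part (1): under $\mathrm{L}\Lambda_\mathfrak{a}(-)\cong\mathrm{Hom}_R(T,-)$ the map $\tau^\mathrm{L}_Y\colon Y\to\mathrm{L}\Lambda_\mathfrak{a}(Y)$ corresponds to $\mathrm{Hom}_R(\varepsilon,Y)\colon Y=\mathrm{Hom}_R(R,Y)\to\mathrm{Hom}_R(T,Y)$, where $\varepsilon\colon T\to D^0_1(R)$ is the augmentation realizing the class of $\sigma^\mathrm{R}_R$ of Proposition \ref{lem:2.0}, which at the chain level is $e\circ w_\textbf{x}$ for the morphism $e\colon\check{C}(\textbf{x};R)\to R$ of Lemma \ref{lem:3.7}. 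Substituting $Y=\mathrm{L}\Lambda_\mathfrak{a}(X)$ and passing through the adjunction, $\tau^\mathrm{L}_{\mathrm{L}\Lambda_\mathfrak{a}(X)}$ becomes $\mathrm{Hom}_R(\varepsilon\otimes 1,X)$, and since $\varepsilon\otimes 1$ is carried by $w_\textbf{x}$ to the quasi-isomorphism $e\otimes 1$ of Lemma \ref{lem:3.7}, the map $\mathrm{Hom}_R(\varepsilon\otimes 1,X)$ is an isomorphism in $\mathbf{D}_N(R)$. Idempotence of $\mathrm{L}\Lambda_\mathfrak{a}$ then follows formally. The step requiring the most care is precisely the verification that the relevant naturality squares (for $\tau^\mathrm{L}$, for the adjunction isomorphism, and for the augmentation compatibility $e\circ w_\textbf{x}=\varepsilon$) commute in $\mathbf{D}_N(R)$, so that the two occurrences of the augmentation genuinely coincide.
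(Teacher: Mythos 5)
Your proof is correct. The paper states this corollary without any proof, and your derivation is exactly the intended one: identify $\mathrm{R}\Gamma_\mathfrak{a}(R)$ with $\mathrm{Tel}(\textbf{x};R)$ via Theorem \ref{lem:3.6} and Lemma \ref{lem:4.1}, compute $\mathrm{RHom}$ on the semi-projective telescope and invoke Theorem \ref{lem:4.3} for part (1), then collapse $\mathrm{Hom}_R(T,\mathrm{Hom}_R(T,X))\cong\mathrm{Hom}_R(T\otimes_RT,X)$ using the adjunction and Lemma \ref{lem:3.7} for part (2); this is the $N$-complex analogue of the Porta--Shaul--Yekutieli argument. One small imprecision: the chain $T\otimes_RT\to\check{C}(\textbf{x};R)\otimes_R\check{C}(\textbf{x};R)\to\check{C}(\textbf{x};R)\leftarrow T$ passes through the $\check{\mathrm{C}}$ech complex, which is \emph{not} semi-projective, so you cannot literally apply $\mathrm{Hom}_R(-,X)$ to that chain termwise. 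This is harmless: either note that $T\otimes_RT$ and $T$ are quasi-isomorphic semi-projective $N$-complexes and hence homotopy equivalent, or (as you in effect do in your final paragraph) observe that the single map $1\otimes\varepsilon\colon T\otimes_RT\to T$ is already a quasi-isomorphism between bounded complexes of free modules, by comparison with $1\otimes e$ through $w_{\textbf{x}}\otimes w_{\textbf{x}}$, and apply $\mathrm{Hom}_R(-,X)$ to that one map. Your attention to identifying the abstract isomorphism with the canonical $\tau^{\mathrm{L}}_{\mathrm{L}\Lambda_\mathfrak{a}(X)}$ is well placed and handled correctly; the paper silently elides this point.
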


\section{\bf MGM equivalence of $N$-complexes}
The task of this section to prove the MGM equivalence in $\mathbf{D}_N(R)$, i.e.,
we show an equivalence between the category of cohomologically $\mathfrak{a}$-torsion $N$-complexes and the category of cohomologically $\mathfrak{a}$-adic complete $N$-complexes.

\begin{df}\label{lem:5.1} {\rm (1) An $N$-complex $X\in\mathbf{D}_N(R)$ is called cohomologically $\mathfrak{a}$-torsion if the morphism
$\sigma^\mathrm{R}_X:\mathrm{R}\Gamma_{\mathfrak{a}}(X)\rightarrow X$ is an isomorphism.
 The full subcategory of $\mathbf{D}_N(R)$ consisting of cohomologically $\mathfrak{a}$-torsion
$N$-complexes is denoted by $\mathbf{D}_N(R)_{\mathfrak{a}\textrm{-tor}}$.

(2) An $N$-complex $Y\in\mathbf{D}_N(R)$ is called cohomologically $\mathfrak{a}$-adic complete if the
morphism $\tau^\mathrm{L}_Y:Y\rightarrow\mathrm{L}\Lambda_{\mathfrak{a}}(Y)$ is an isomorphism.
 The full subcategory of $\mathbf{D}_N(R)$ consisting of cohomologically $\mathfrak{a}$-adic
complete $N$-complexes is denoted by $\mathbf{D}_N(R)_{\mathfrak{a}\textrm{-com}}$.}
\end{df}

We first show that the functor $\mathrm{R}\Gamma_{\mathfrak{a}}$
is right adjoint to the inclusion $\mathbf{D}_N(R)_{\mathfrak{a}\textrm{-tor}}\hookrightarrow\mathbf{D}_N(R)$ and the functor $\mathrm{L}\Lambda_{\mathfrak{a}}$
is left adjoint to the inclusion $\mathbf{D}_N(R)_{\mathfrak{a}\textrm{-com}}\hookrightarrow\mathbf{D}_N(R)$.

\begin{prop}\label{lem:5.3}{\it{$\mathrm{(1)}$ The morphism $\sigma^\mathrm{R}_Y:\mathrm{R}\Gamma_{\mathfrak{a}}(Y)\rightarrow Y$ induces an isomorphism\begin{center}$\mathrm{Hom}_{\mathbf{D}_N(R)_{\mathfrak{a}\textrm{-}\mathrm{tor}}}(X,
\mathrm{R}\Gamma_{\mathfrak{a}}(Y))\stackrel{\simeq}\longrightarrow\mathrm{Hom}_{\mathbf{D}_N(R)}(X,Y),\ \forall\ X\in\mathbf{D}_N(R)_{\mathfrak{a}\textrm{-}\mathrm{tor}}, Y\in\mathbf{D}_N(R)$.\end{center}

$\mathrm{(2)}$ The morphism $\tau^\mathrm{L}_X:X\rightarrow \mathrm{L}\Lambda_{\mathfrak{a}}(X)$ induces an isomorphism\begin{center}$\mathrm{Hom}_{\mathbf{D}_N(R)_{\mathfrak{a}\textrm{-}\mathrm{com}}}
(\mathrm{L}\Lambda_{\mathfrak{a}}(X),Y)
\stackrel{\simeq}\longrightarrow\mathrm{Hom}_{\mathbf{D}_N(R)}(X,Y),\ \forall\ X\in\mathbf{D}_N(R), Y\in\mathbf{D}_N(R)_{\mathfrak{a}\textrm{-}\mathrm{com}}$.\end{center}}}
\end{prop}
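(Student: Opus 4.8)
The plan is to read Proposition~\ref{lem:5.3} as the statement that $\mathrm{R}\Gamma_{\mathfrak{a}}$ is right adjoint to the inclusion $\mathbf{D}_N(R)_{\mathfrak{a}\textrm{-tor}}\hookrightarrow\mathbf{D}_N(R)$ (a colocalization), and dually that $\mathrm{L}\Lambda_{\mathfrak{a}}$ is left adjoint to $\mathbf{D}_N(R)_{\mathfrak{a}\textrm{-com}}\hookrightarrow\mathbf{D}_N(R)$ (a localization), and to prove each by reducing bijectivity of the displayed map to a single cohomological vanishing. I treat (1) and indicate that (2) is the formal dual. First, by Corollary~\ref{lem:3.8}(1) the counit $\sigma^{\mathrm{R}}_{\mathrm{R}\Gamma_{\mathfrak{a}}(Y)}$ is invertible, so $\mathrm{R}\Gamma_{\mathfrak{a}}(Y)\in\mathbf{D}_N(R)_{\mathfrak{a}\textrm{-tor}}$ and the source $\mathrm{Hom}_{\mathbf{D}_N(R)_{\mathfrak{a}\textrm{-tor}}}(X,\mathrm{R}\Gamma_{\mathfrak{a}}(Y))$ is just $\mathrm{Hom}_{\mathbf{D}_N(R)}(X,\mathrm{R}\Gamma_{\mathfrak{a}}(Y))$, the map being postcomposition by $\sigma^{\mathrm{R}}_Y$. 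I would then complete $\sigma^{\mathrm{R}}_Y$ to an exact triangle $\mathrm{R}\Gamma_{\mathfrak{a}}(Y)\xrightarrow{\sigma^{\mathrm{R}}_Y}Y\to C\to\Sigma\mathrm{R}\Gamma_{\mathfrak{a}}(Y)$ and apply $\mathrm{Hom}_{\mathbf{D}_N(R)}(X,-)$; the long exact sequence shows the map is an isomorphism as soon as $\mathrm{Hom}_{\mathbf{D}_N(R)}(X,C)=0$ and $\mathrm{Hom}_{\mathbf{D}_N(R)}(X,\Sigma^{-1}C)=0$.

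Two ingredients then finish (1). The first is that $\mathrm{R}\Gamma_{\mathfrak{a}}(C)=0$: applying the triangle functor $\mathrm{R}\Gamma_{\mathfrak{a}}$ to the triangle, it suffices that $\mathrm{R}\Gamma_{\mathfrak{a}}(\sigma^{\mathrm{R}}_Y)$ be invertible; under the identification $\mathrm{R}\Gamma_{\mathfrak{a}}(-)\simeq\check{C}(\textbf{x};R)\otimes_R-$ of Theorem~\ref{lem:3.6}, the morphism $\sigma^{\mathrm{R}}_Y$ corresponds to $e\otimes\mathrm{id}$ for the augmentation $e:\check{C}(\textbf{x};R)\to R$, so $\mathrm{R}\Gamma_{\mathfrak{a}}(\sigma^{\mathrm{R}}_Y)$ becomes $(1\otimes e)\otimes\mathrm{id}$, a quasi-isomorphism by Lemma~\ref{lem:3.7}; hence the third vertex $\mathrm{R}\Gamma_{\mathfrak{a}}(C)$ is acyclic, and likewise $\mathrm{R}\Gamma_{\mathfrak{a}}(\Sigma^{-1}C)\simeq\Sigma^{-1}\mathrm{R}\Gamma_{\mathfrak{a}}(C)=0$. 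The second is a soft vanishing lemma: if $X\in\mathbf{D}_N(R)_{\mathfrak{a}\textrm{-tor}}$ and $\mathrm{R}\Gamma_{\mathfrak{a}}(Z)=0$, then $\mathrm{Hom}_{\mathbf{D}_N(R)}(X,Z)=0$. Indeed, for any $f:X\to Z$ the naturality square of $\sigma^{\mathrm{R}}$ gives $f\circ\sigma^{\mathrm{R}}_X=\sigma^{\mathrm{R}}_Z\circ\mathrm{R}\Gamma_{\mathfrak{a}}(f)$; since $\mathrm{R}\Gamma_{\mathfrak{a}}(f)$ factors through $\mathrm{R}\Gamma_{\mathfrak{a}}(Z)=0$ and $\sigma^{\mathrm{R}}_X$ is invertible (as $X$ is $\mathfrak{a}$-torsion, Definition~\ref{lem:5.1}), we conclude $f=0$. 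Applying this with $Z=C$ and $Z=\Sigma^{-1}C$ yields the two required vanishings.

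Part (2) runs dually: by Corollary~\ref{lem:4.4}(2) the object $\mathrm{L}\Lambda_{\mathfrak{a}}(X)$ is $\mathfrak{a}$-adic complete, so the target Hom is computed in $\mathbf{D}_N(R)$ and the map is precomposition by $\tau^{\mathrm{L}}_X$; completing $\tau^{\mathrm{L}}_X$ to a triangle $\Sigma^{-1}C'\to X\xrightarrow{\tau^{\mathrm{L}}_X}\mathrm{L}\Lambda_{\mathfrak{a}}(X)\to C'$ and applying $\mathrm{Hom}_{\mathbf{D}_N(R)}(-,Y)$ reduces the claim to $\mathrm{Hom}_{\mathbf{D}_N(R)}(C',Y)=0=\mathrm{Hom}_{\mathbf{D}_N(R)}(\Sigma^{-1}C',Y)$. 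Here $\mathrm{L}\Lambda_{\mathfrak{a}}(C')=0$ follows from Theorem~\ref{lem:4.3} together with Lemma~\ref{lem:3.7} applied through $\mathrm{RHom}_R(\check{C}(\textbf{x};R),-)$, and the dual vanishing lemma --- if $Y$ is $\mathfrak{a}$-complete and $\mathrm{L}\Lambda_{\mathfrak{a}}(W)=0$ then $\mathrm{Hom}_{\mathbf{D}_N(R)}(W,Y)=0$ --- comes from naturality of $\tau^{\mathrm{L}}$, namely $\tau^{\mathrm{L}}_Y\circ g=\mathrm{L}\Lambda_{\mathfrak{a}}(g)\circ\tau^{\mathrm{L}}_W$ with $\tau^{\mathrm{L}}_Y$ invertible. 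I expect the main obstacle to be precisely the identification used for $\mathrm{R}\Gamma_{\mathfrak{a}}(C)=0$ and its dual: matching the natural transformations $\sigma^{\mathrm{R}}$ (resp.\ $\tau^{\mathrm{L}}$) with the augmentation $e$ and the structural quasi-isomorphism of Lemma~\ref{lem:3.7} under the identifications of Theorems~\ref{lem:3.6} and~\ref{lem:4.3} requires care, whereas the two vanishing lemmas are formal once naturality and invertibility of the (co)units are in hand.
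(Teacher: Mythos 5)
Your argument is correct, but it takes a genuinely different route from the paper's. The paper proves (1) by exhibiting an explicit two-sided inverse to the map $\varrho_{X,Y}=\mathrm{Hom}(X,\sigma^{\mathrm{R}}_Y)$: it sends $\alpha\colon X\to Y$ to $\mathrm{R}\Gamma_{\mathfrak{a}}(\alpha)\circ(\sigma^{\mathrm{R}}_X)^{-1}$, and checks the two composites are identities using naturality of $\sigma^{\mathrm{R}}$ together with the idempotence isomorphism $\mathrm{R}\Gamma_{\mathfrak{a}}(\mathrm{R}\Gamma_{\mathfrak{a}}(Y))\cong\mathrm{R}\Gamma_{\mathfrak{a}}(Y)$ of Corollary \ref{lem:3.8}(1); part (2) is then declared dual. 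You instead run the Bousfield-localization argument: complete $\sigma^{\mathrm{R}}_Y$ to a triangle, show the cone $C$ satisfies $\mathrm{R}\Gamma_{\mathfrak{a}}(C)=0$, prove the orthogonality lemma $\mathrm{Hom}(X,C)=0$ for torsion $X$, and conclude via the long exact sequence. Both proofs ultimately rest on the same nontrivial input --- that $\mathrm{R}\Gamma_{\mathfrak{a}}$ applied to (or evaluated at) $\sigma^{\mathrm{R}}_Y$ is invertible, which in either treatment comes down to matching $\sigma^{\mathrm{R}}$ with the augmentation $e\colon\check{C}(\textbf{x};R)\to R$ under Theorem \ref{lem:3.6} and invoking Lemma \ref{lem:3.7}; the paper hides this inside the unproved Corollary \ref{lem:3.8}(1), and you are right to flag it as the delicate step (note that $\mathrm{R}\Gamma_{\mathfrak{a}}(\sigma^{\mathrm{R}}_Y)$ and $\sigma^{\mathrm{R}}_{\mathrm{R}\Gamma_{\mathfrak{a}}(Y)}$ are not formally interchangeable, so your direct identification via $1\otimes e$ is the honest way to get the cone to vanish). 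What your approach buys is slightly more: it exhibits the semiorthogonal decomposition $\mathrm{R}\Gamma_{\mathfrak{a}}(Y)\to Y\to C$ with $C$ right-orthogonal to the torsion subcategory, from which the adjunction is formal; the paper's computation is shorter but gives only the Hom-isomorphism itself. One small point of hygiene in your write-up of (2): the idempotence of $\mathrm{L}\Lambda_{\mathfrak{a}}$ you cite and the vanishing $\mathrm{L}\Lambda_{\mathfrak{a}}(C')=0$ are best obtained from the telescope model $\mathrm{Hom}_R(\mathrm{Tel}(\textbf{x};R),-)$ of Theorem \ref{lem:4.3} rather than from $\mathrm{RHom}_R(\check{C}(\textbf{x};R),-)$, since $\mathrm{Tel}(\textbf{x};R)\otimes_R\mathrm{Tel}(\textbf{x};R)\to\mathrm{Tel}(\textbf{x};R)$ is a quasi-isomorphism of bounded $N$-complexes of projectives, hence a homotopy equivalence, and so is preserved by $\mathrm{Hom}_R(-,X)$ for arbitrary $X$; this is exactly the adjustment needed to make the dual half airtight.
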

\begin{proof} We just prove (1) since (2) follows by duality.

 We need to show that $\varrho_{X,Y}:\mathrm{Hom}_{\mathbf{D}_N(R)_{\mathfrak{a}\textrm{-tor}}}(X,\mathrm{R}\Gamma_{\mathfrak{a}}(Y))
=\mathrm{Hom}_{\mathbf{D}_N(R)}(X,\mathrm{R}\Gamma_{\mathfrak{a}}(Y))
\rightarrow\mathrm{Hom}_{\mathbf{D}_N(R)}(X,Y)$ is an isomorphism. Referring then to the diagram \begin{center}$\mathrm{Hom}_{\mathbf{D}_N(R)}(X,Y)\stackrel{\nu}\longrightarrow
\mathrm{Hom}_{\mathbf{D}_N(R)}(\mathrm{R}\Gamma_{\mathfrak{a}}(X),\mathrm{R}\Gamma_{\mathfrak{a}}(Y))
\stackrel{\rho}\longleftarrow\mathrm{Hom}_{\mathbf{D}_N(R)}(X,\mathrm{R}\Gamma_{\mathfrak{a}}(Y))$,\end{center}
where $\nu$ is the natural morphism and $\rho$ is induced by the isomorphism
$\sigma^\mathrm{R}_X:\mathrm{R}\Gamma_{\mathfrak{a}}(X)\rightarrow X$. Next we show that $\rho^{-1}\nu$ is inverse to $\varrho_{X,Y}$.
That $\varrho_{X,Y}\rho^{-1}\nu(\alpha)=\alpha$ for any $\alpha\in\mathrm{Hom}_{\mathbf{D}_N(R)}(X,Y)$ amounts to the (obvious)
commutativity of the diagram
\begin{center} $\xymatrix@C=35pt@R=20pt{
\mathrm{R}\Gamma_{\mathfrak{a}}(X)\ar[d]_{\sigma^\mathrm{R}_X}^\simeq\ar[r]^{\mathrm{R}\Gamma_{\mathfrak{a}}(\alpha)} &\mathrm{R}\Gamma_{\mathfrak{a}}(Y)\ar[d]^{\sigma^\mathrm{R}_Y}\\
X\ar[r]^{\alpha}& Y.}$
\end{center}
That $\rho^{-1}\nu\varrho_{X,Y}(\beta)=\beta$ for $\beta\in\mathrm{Hom}_{\mathbf{D}_N(R)}(X,\mathrm{R}\Gamma_{\mathfrak{a}}(Y))$ amounts to commutativity of
\begin{center} $\xymatrix@C=35pt@R=20pt{
\mathrm{R}\Gamma_{\mathfrak{a}}(X)\ar[d]_{\sigma^\mathrm{R}_X}^\simeq\ar[r]^{\mathrm{R}\Gamma_{\mathfrak{a}}(\beta)\ \ \ \ } &\mathrm{R}\Gamma_{\mathfrak{a}}(\mathrm{R}\Gamma_{\mathfrak{a}}(Y))\ar[d]^\simeq\\
X\ar[r]^{\beta\ \ \ }& \mathrm{R}\Gamma_{\mathfrak{a}}(Y).}$
\end{center}This shows the isomorphism we seek.
\end{proof}

Here is the main result of our paper, similar results can be found in \cite[Section 7]{PSY}.

\begin{thm}\label{lem:5.4}{\it{Let $\mathfrak{a}$ be a weakly proregular ideal of $R$.

$\mathrm{(1)}$ For any $X\in\mathbf{D}_N(R)$, the morphism $\mathrm{L}\Lambda_{\mathfrak{a}}(\sigma^\mathrm{R}_X):\mathrm{L}\Lambda_{\mathfrak{a}}
(\mathrm{R}\Gamma_{\mathfrak{a}}(X))\rightarrow\mathrm{L}\Lambda_{\mathfrak{a}}(X)$ is an isomorphism.

$\mathrm{(2)}$ For any $X\in\mathbf{D}_N(R)$, the morphism $\mathrm{R}\Gamma_{\mathfrak{a}}(\tau^\mathrm{L}_X):\mathrm{R}\Gamma_{\mathfrak{a}}(X)
\rightarrow\mathrm{R}\Gamma_{\mathfrak{a}}(\mathrm{L}\Lambda_{\mathfrak{a}}(X))$ is an isomorphism.

$\mathrm{(3)}$ For any $X\in\mathbf{D}_N(R)$, one has $\mathrm{R}\Gamma_{\mathfrak{a}}(X)\in\mathbf{D}_N(R)_{\mathfrak{a}\textrm{-}\mathrm{tor}}$ and $\mathrm{L}\Lambda_{\mathfrak{a}}(X)\in\mathbf{D}_N(R)_{\mathfrak{a}\textrm{-}\mathrm{com}}$.

$\mathrm{(4)}$ The functors $\mathrm{R}\Gamma_{\mathfrak{a}}:\mathbf{D}_N(R)_{\mathfrak{a}\textrm{-}\mathrm{com}}\rightleftarrows\mathbf{D}_N(R)_{\mathfrak{a}\textrm{-}\mathrm{tor}}:\mathrm{L}\Lambda_{\mathfrak{a}}$ form an equivalence.}}
\end{thm}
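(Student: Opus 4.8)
The plan is to deduce (3) at once from idempotence, to establish (1) and (2) by hand from the explicit telescope and \v{C}ech models of Section 5, and then to obtain (4) formally. First, (3) is nothing but the two idempotence statements: by Definition \ref{lem:5.1}, $\mathrm{R}\Gamma_{\mathfrak{a}}(X)$ lies in $\mathbf{D}_N(R)_{\mathfrak{a}\text{-}\mathrm{tor}}$ exactly when $\sigma^\mathrm{R}_{\mathrm{R}\Gamma_{\mathfrak{a}}(X)}$ is an isomorphism, which is precisely the idempotence of $\mathrm{R}\Gamma_{\mathfrak{a}}$ recorded in Corollary \ref{lem:3.8}(1); dually, $\mathrm{L}\Lambda_{\mathfrak{a}}(X)\in\mathbf{D}_N(R)_{\mathfrak{a}\text{-}\mathrm{com}}$ is the idempotence of $\mathrm{L}\Lambda_{\mathfrak{a}}$.

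For (1) I would use the identifications $\mathrm{R}\Gamma_{\mathfrak{a}}(-)\simeq\check{C}(\textbf{x};R)\otimes_R-$ (Theorem \ref{lem:3.6}) and $\mathrm{L}\Lambda_{\mathfrak{a}}(-)\simeq\mathrm{Hom}_R(\mathrm{Tel}(\textbf{x};R),-)$ (Theorem \ref{lem:4.3}), for $\textbf{x}$ a weakly proregular generating sequence of $\mathfrak{a}$. Under the first identification one checks that $\sigma^\mathrm{R}_X$ becomes $e\otimes\mathrm{id}_X$, with $e\colon\check{C}(\textbf{x};R)\to R$ the augmentation of Lemma \ref{lem:3.7}. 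Since $\mathrm{L}\Lambda_{\mathfrak{a}}$ is a triangle functor, $\mathrm{L}\Lambda_{\mathfrak{a}}(\sigma^\mathrm{R}_X)$ is an isomorphism iff $\mathrm{L}\Lambda_{\mathfrak{a}}(\mathrm{cone}(e)\otimes_R X)\simeq 0$. Now $\mathrm{cone}(e)$ admits a finite filtration, coming from the tensor decomposition $\check{C}(\textbf{x};R)\cong\check{C}(x_1;R)\otimes_R\cdots\otimes_R\check{C}(x_d;R)$ of Construction \ref{lem:2.1}, whose subquotients are shifts of the localizations $R_{x_{i_1}\cdots x_{i_k}}$ with $k\ge 1$; each such localization inverts some $x_i$. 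So it suffices to show $\mathrm{L}\Lambda_{\mathfrak{a}}(M)\simeq 0$ whenever a single $x_i$ acts invertibly on $M$. This I would obtain by factoring $\mathrm{L}\Lambda_{\mathfrak{a}}$ through $\mathrm{Hom}_R(\mathrm{Tel}(x_i;R),-)$, using Hom--tensor adjointness (Remark \ref{lem:0.4}(2)) and $\mathrm{Tel}(\textbf{x};R)\cong\bigotimes_j\mathrm{Tel}(x_j;R)$, and then invoking that the derived $x_i$-adic completion of an $x_i$-local object vanishes --- the dual of the acyclicity $\check{C}(x_i;R)\otimes_R(x_i\text{-local})\simeq 0$ noted in the proof of Lemma \ref{lem:3.7}. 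Part (2) is the mirror image: $\mathrm{R}\Gamma_{\mathfrak{a}}(\tau^\mathrm{L}_X)$ is an isomorphism iff $\mathrm{R}\Gamma_{\mathfrak{a}}(\mathrm{cone}(\tau^\mathrm{L}_X))\simeq 0$, and $\mathrm{cone}(\tau^\mathrm{L}_X)$ is assembled from the complexes $\mathrm{Hom}_R(R_{x_{i_1}\cdots x_{i_k}},X)$, on which the relevant $x_i$ again acts invertibly; here $\check{C}(x_i;R)\otimes_R(x_i\text{-local})\simeq 0$ applies directly.

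Granting (1)--(3), part (4) is formal. By (3) the functors restrict to $F:=\mathrm{R}\Gamma_{\mathfrak{a}}\colon\mathbf{D}_N(R)_{\mathfrak{a}\text{-}\mathrm{com}}\to\mathbf{D}_N(R)_{\mathfrak{a}\text{-}\mathrm{tor}}$ and $G:=\mathrm{L}\Lambda_{\mathfrak{a}}\colon\mathbf{D}_N(R)_{\mathfrak{a}\text{-}\mathrm{tor}}\to\mathbf{D}_N(R)_{\mathfrak{a}\text{-}\mathrm{com}}$. For $X\in\mathbf{D}_N(R)_{\mathfrak{a}\text{-}\mathrm{com}}$, both $GF(X)=\mathrm{L}\Lambda_{\mathfrak{a}}(\mathrm{R}\Gamma_{\mathfrak{a}}(X))\xrightarrow{\mathrm{L}\Lambda_{\mathfrak{a}}(\sigma^\mathrm{R}_X)}\mathrm{L}\Lambda_{\mathfrak{a}}(X)$ (by (1)) and $\tau^\mathrm{L}_X\colon X\to\mathrm{L}\Lambda_{\mathfrak{a}}(X)$ (by completeness of $X$) are isomorphisms, so $(\tau^\mathrm{L}_X)^{-1}\circ\mathrm{L}\Lambda_{\mathfrak{a}}(\sigma^\mathrm{R}_X)$ is a natural isomorphism $GF\cong\mathrm{id}$. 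Symmetrically, for $Y\in\mathbf{D}_N(R)_{\mathfrak{a}\text{-}\mathrm{tor}}$ the maps $\sigma^\mathrm{R}_Y\colon\mathrm{R}\Gamma_{\mathfrak{a}}(Y)\to Y$ (iso since $Y$ is torsion) and $\mathrm{R}\Gamma_{\mathfrak{a}}(\tau^\mathrm{L}_Y)\colon\mathrm{R}\Gamma_{\mathfrak{a}}(Y)\to FG(Y)$ (iso by (2)) give a natural isomorphism $\mathrm{id}\cong FG$. Hence $F$ and $G$ are quasi-inverse equivalences; the adjunctions of Proposition \ref{lem:5.3} then identify the unit and counit with the expected ones.

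The main obstacle is the vanishing step underlying (1) and (2). One must (i) verify that $\sigma^\mathrm{R}_X$ and $\tau^\mathrm{L}_X$ really correspond to the augmentation $e$ under the models of Theorems \ref{lem:3.6} and \ref{lem:4.3}, (ii) produce the finite filtration of $\mathrm{cone}(e)$ by localizations inside $\mathbf{D}_N(R)$, and (iii) split off one variable $x_i$ at a time so as to reduce to the single-element vanishing $\check{C}(x_i;R)\otimes_R(x_i\text{-local})\simeq 0$. Each of these uses the $q$-twisted tensor and Hom of Definition \ref{lem:0.2} and the suspension $\Sigma$, so the delicate point is to confirm that the triangles, the adjunction isomorphisms, and the tensor decompositions of the \v{C}ech and telescope $N$-complexes behave in $\mathbf{D}_N(R)$ exactly as for ordinary complexes; once this bookkeeping is settled, the argument proceeds as in the case $N=2$.
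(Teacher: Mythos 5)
Your handling of (3) and (4) coincides with the paper's: (3) is exactly the idempotence statements of Corollary \ref{lem:3.8}(1) and its completion analogue, and (4) is the same formal argument from (1)--(3). For (1) and (2), however, you take a genuinely different route. The paper proves (1) by rewriting $\mathrm{L}\Lambda_{\mathfrak{a}}(-)$ as $\mathrm{RHom}_R(\mathrm{R}\Gamma_{\mathfrak{a}}(R),-)$ (via Theorems \ref{lem:3.6} and \ref{lem:4.3}) and then invoking the already-established adjunction of Proposition \ref{lem:5.3}(1) applied to the single torsion object $\mathrm{R}\Gamma_{\mathfrak{a}}(R)$; for (2) it writes $\mathrm{R}\Gamma_{\mathfrak{a}}(-)\simeq\mathrm{R}\Gamma_{\mathfrak{a}}(R)\otimes^{\mathrm{L}}_R-$ and dualizes against a faithful injective $E$ so as to reduce to Proposition \ref{lem:5.3}(2). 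You instead reduce both statements to a concrete vanishing: the cone of the augmentation $e\colon\check{C}(\textbf{x};R)\to R$ carries a finite filtration by (shifts of) localizations $R_{x_{i_1}\cdots x_{i_k}}$ with $k\geq 1$, and $\mathrm{L}\Lambda_{\mathfrak{a}}$ (resp.\ $\mathrm{R}\Gamma_{\mathfrak{a}}$) kills any $N$-complex on which some $x_i$ acts invertibly --- the latter being exactly the acyclicity of $\check{C}(x_i;R)\otimes_R(x_i\text{-local})$ observed in the proof of Lemma \ref{lem:3.7}, together with its $\mathrm{Hom}(\mathrm{Tel}(x_i;R),-)$ counterpart. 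Both routes are sound. What the paper's argument buys is brevity: once Proposition \ref{lem:5.3} is in place, no further computation is needed. What your argument buys is self-containedness and explicitness, at the cost of the bookkeeping you correctly flag: you must verify that $\sigma^{\mathrm{R}}_X$ and $\tau^{\mathrm{L}}_X$ really become the augmentation maps $e\otimes 1_X$ and $\mathrm{Hom}(e,1_X)$ under the \v{C}ech and telescope models (a compatibility the paper itself uses silently, e.g.\ in Corollary \ref{lem:3.8}), and that the triangle from Construction \ref{lem:2.1} and the $q$-twisted adjunction of Remark \ref{lem:0.4}(2) assemble the filtration correctly in $\mathbf{D}_N(R)$. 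These verifications are routine but not free; with them supplied, your proof is complete.
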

\begin{proof} (1) By Corollary \ref{lem:4.2} and Theorem \ref{lem:4.3}, we have a commutative diagram\begin{center} $\xymatrix@C=90pt@R=20pt{
\mathrm{L}\Lambda_{\mathfrak{a}}(\mathrm{R}\Gamma_{\mathfrak{a}}(X))\ar[d]_{\simeq}\ar[r]^{\ \ \ \ \mathrm{L}\Lambda_{\mathfrak{a}}(\sigma^\mathrm{R}_X)} &\mathrm{L}\Lambda_{\mathfrak{a}}(X)\ar[d]^\simeq\\
\mathrm{RHom}_{R}(\mathrm{R}\Gamma_{\mathfrak{a}}(R),\mathrm{R}\Gamma_{\mathfrak{a}}(X))\ar[r]^{\ \ \ \ \mathrm{RHom}_{R}(\mathrm{R}\Gamma_{\mathfrak{a}}(R),\sigma^\mathrm{R}_X)}& \mathrm{RHom}_{R}(\mathrm{R}\Gamma_{\mathfrak{a}}(R),X).}$
\end{center}Since $\mathrm{R}\Gamma_{\mathfrak{a}}(R)\in\mathbf{D}_N(R)_{\mathfrak{a}\textrm{-}\mathrm{tor}}$, it follows from
Proposition \ref{lem:5.3}(1) that $\mathrm{RHom}_{R}(\mathrm{R}\Gamma_{\mathfrak{a}}(R),\sigma^\mathrm{R}_X)$ is an isomorphism, so is  $\mathrm{L}\Lambda_{\mathfrak{a}}(\sigma^\mathrm{R}_X)$.

(2) By Corollary \ref{lem:4.2} and Theorem \ref{lem:4.3}, one has a commutative diagram\begin{center} $\xymatrix@C=85pt@R=20pt{
\mathrm{R}\Gamma_{\mathfrak{a}}(R)\otimes^\mathrm{L}_RX\ar[d]_{\simeq}
\ar[r]^{\mathrm{R}\Gamma_{\mathfrak{a}}(R)\otimes^\mathrm{L}_R\tau^\mathrm{L}_X\ \ \ \ \  } &\mathrm{R}\Gamma_{\mathfrak{a}}(R)\otimes^\mathrm{L}_R\mathrm{L}\Lambda_{\mathfrak{a}}(X)\ar[d]^\simeq\\
\mathrm{R}\Gamma_{\mathfrak{a}}(X)\ar[r]^{\mathrm{R}\Gamma_{\mathfrak{a}}(\tau^\mathrm{L}_X)\ \ }& \mathrm{R}\Gamma_{\mathfrak{a}}(\mathrm{L}\Lambda_{\mathfrak{a}}(X)).}$
\end{center}Let $E$ be a faithful injective $R$-module. Applying the functor $\mathrm{RHom}_R(-,E)$ to $\mathrm{R}\Gamma_{\mathfrak{a}}(R)\otimes^\mathrm{L}_R\tau^\mathrm{L}_X$, we have a commutative diagram:
\begin{center} $\xymatrix@C=110pt@R=20pt{
\mathrm{RHom}_{R}(\mathrm{R}\Gamma_{\mathfrak{a}}(R)\otimes^\mathrm{L}_R\mathrm{L}\Lambda_{\mathfrak{a}}(X),E)
\ar[d]_{\simeq}\ar[r]^{ \ \ \ \ \mathrm{RHom}_{R}(\mathrm{R}\Gamma_{\mathfrak{a}}(R)\otimes^\mathrm{L}_R\tau^\mathrm{L}_X,E)} &\mathrm{RHom}_{R}(\mathrm{R}\Gamma_{\mathfrak{a}}(R)\otimes^\mathrm{L}_RX,E)\ar[d]^\simeq\\
\mathrm{RHom}_{R}(\mathrm{L}\Lambda_{\mathfrak{a}}(X),\mathrm{L}\Lambda_{\mathfrak{a}}(E))\ar[r]^{\ \ \ \ \ \mathrm{RHom}_{R}(\tau^\mathrm{L}_X,\mathrm{L}\Lambda_{\mathfrak{a}}(E))}& \mathrm{RHom}_{R}(X,\mathrm{L}\Lambda_{\mathfrak{a}}(E)).}$
\end{center}Since $\mathrm{L}\Lambda_{\mathfrak{a}}(X)\in\mathbf{D}_N(R)_{\mathfrak{a}\textrm{-}\mathrm{com}}$, it follows from Proposition \ref{lem:5.3}(2) that $\mathrm{RHom}_{R}(\tau^\mathrm{L}_X,\mathrm{L}\Lambda_{\mathfrak{a}}(E))$ is an isomorphism, so is  $\mathrm{RHom}_{R}(\mathrm{R}\Gamma_{\mathfrak{a}}(R)\otimes^\mathrm{L}_R\tau^\mathrm{L}_X,E)=
\mathrm{Hom}_{R}(\mathrm{R}\Gamma_{\mathfrak{a}}(R)\otimes^\mathrm{L}_R\tau^\mathrm{L}_X,E)$.
But $E$ is faithful injective, so $\mathrm{R}\Gamma_{\mathfrak{a}}(R)\otimes^\mathrm{L}_R\tau^\mathrm{L}_X$ is an isomorphism, and hence $\mathrm{R}\Gamma_{\mathfrak{a}}(\tau^\mathrm{L}_X)$ is an isomorphism.

(3) This is immediate from the idempotence of the functors $\mathrm{R}\Gamma_{\mathfrak{a}}$ and $\mathrm{L}\Lambda_{\mathfrak{a}}$.

(4) By (1), there are functorial isomorphisms \begin{center}$X\cong\mathrm{L}\Lambda_{\mathfrak{a}}(X)\cong
\mathrm{L}\Lambda_{\mathfrak{a}}(\mathrm{R}\Gamma_{\mathfrak{a}}(X))$ for $X\in\mathbf{D}_N(R)_{\mathfrak{a}\textrm{-}\mathrm{com}}$.\end{center}By (2), there are functorial isomorphisms \begin{center}$X\cong\mathrm{R}\Gamma_{\mathfrak{a}}(X)\cong
\mathrm{R}\Gamma_{\mathfrak{a}}(\mathrm{L}\Lambda_{\mathfrak{a}}(X))$ for $X\in\mathbf{D}_N(R)_{(\mathfrak{a})\textrm{-}\mathrm{tor}}$.\end{center} These isomorphisms yield the desired
equivalence.
\end{proof}

\begin{rem}\label{lem:5.5}{\rm Let $\mathfrak{a}$ be a weakly proregular ideal of $R$. For any $X,Y\in\mathbf{D}_N(R)$, one has that the morphisms
\begin{center}$\begin{aligned}\mathrm{RHom}_{R}(\mathrm{R}\Gamma_{\mathfrak{a}}(X),\mathrm{R}\Gamma_{\mathfrak{a}}(Y))
& \xleftarrow{\mathrm{RHom}_{R}(\mathrm{R}\Gamma_{\mathfrak{a}}(\tau^\mathrm{L}_X),1)}
\mathrm{RHom}_{R}(\mathrm{R}\Gamma_{\mathfrak{a}}(\mathrm{L}\Lambda_{\mathfrak{a}}(X)),
\mathrm{R}\Gamma_{\mathfrak{a}}(Y))\\
& \xrightarrow{\textrm{adjunction}}\mathrm{RHom}_{R}(\mathrm{L}\Lambda_{\mathfrak{a}}(X),
\mathrm{L}\Lambda_{\mathfrak{a}}(\mathrm{R}\Gamma_{\mathfrak{a}}(Y)))\\
& \xrightarrow{\mathrm{RHom}_{R}(1,\mathrm{L}\Lambda_{\mathfrak{a}}(\sigma^\mathrm{R}_Y))}
\mathrm{RHom}_{R}(\mathrm{L}\Lambda_{\mathfrak{a}}(X),\mathrm{L}\Lambda_{\mathfrak{a}}(Y))\end{aligned}$\end{center}is an isomorphism in $\mathbf{D}_N(R)$.}
\end{rem}

\section{\bf Invariant}

In this section, we prove that over a commutative noetherian ring, via Koszul cohomology, via RHom cohomology (resp. $\otimes$ cohomology) and via local cohomology (resp. derived completion), all yield the same invariant.

\begin{lem}\label{lem:6.1}{\it{Let $\textbf{x}=x_1,... ,x_d$ be a sequence of $R$ and $X$ an $N$-complex. For $t=1,\cdots,N-1$, one has $(\textbf{x})\mathrm{H}_t(\textbf{x};X)=0=(\textbf{x})\mathrm{H}_t(\mathrm{Hom}_R(K^\bullet(\textbf{x};R);X))$.}}
\end{lem}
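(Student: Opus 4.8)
The plan is to show that multiplication by each generator $x_i$ is the zero morphism on $K^\bullet(\textbf{x};R)$ in the homotopy category $\mathbf{K}_N(R)$, and then to transport this vanishing through the two functors $-\otimes_R X$ and $\mathrm{Hom}_R(-,X)$. Since a morphism that is zero in $\mathbf{K}_N(R)$ (i.e.\ null-homotopic) induces the zero map on every cohomology $\mathrm{H}^j_t$, this immediately forces each $x_i$ to annihilate the two cohomology modules in the statement, and hence so does the whole ideal $(\textbf{x})$.

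First I would reduce to a single element. Using the isomorphism $K^\bullet(\textbf{x};R)\cong K^\bullet(x_1;R)\otimes_R\cdots\otimes_R K^\bullet(x_d;R)$ recorded after Definition \ref{lem:1.1}, and the fact that $x_i$ is central in $R$, multiplication by $x_i$ factors as $\mathrm{id}\otimes\cdots\otimes(x_i\cdot\mathrm{id}_{K^\bullet(x_i;R)})\otimes\cdots\otimes\mathrm{id}$. Because $\otimes_R$ descends to $\mathbf{K}_N(R)$, it therefore suffices to prove that $x\cdot\mathrm{id}$ is null-homotopic on the one-variable complex $K^\bullet(x;R)$.

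Second I would exhibit an explicit $N$-complex homotopy. Recall $K^\bullet(x;R)$ has $R$ in degrees $-N+1,\dots,0$, with $d^{-1}=x$ and $d^n=\mathrm{id}$ for $-N+1\leq n\leq -2$. I would take $s^0=\mathrm{id}_R\colon (K^\bullet)^0\to(K^\bullet)^{-N+1}$ and $s^n=0$ for $n\neq 0$; the degree shift $s^n\colon X^n\to Y^{n-N+1}$ forces $s^0$ to be the only admissible nonzero component. The verification amounts to checking the homotopy identity $x\cdot\mathrm{id}^n=\sum_{i=0}^{N-1}d^{N-1-i}\,s^{n+i}\,d^i$. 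For each $n$ with $-N+1\leq n\leq 0$ only the index $i=-n$ contributes (so that $s^{n+i}=s^0$), leaving the single composite $d^{\,N-1+n}\,s^0\,d^{\,-n}$; one of the two iterated differentials always picks up exactly one factor of $x$ from $d^{-1}$ while every other factor is the identity, so the composite equals $x\cdot\mathrm{id}^n$. For $n$ outside $[-N+1,0]$ both sides vanish. Thus $x\cdot\mathrm{id}=0$ in $\mathbf{K}_N(R)$, and by the first paragraph $x_i\cdot\mathrm{id}_{K^\bullet(\textbf{x};R)}=0$ in $\mathbf{K}_N(R)$ for each $i$.

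Finally I would apply the triangle functors $-\otimes_R X$ and $\mathrm{Hom}_R(-,X)$ of Lemma \ref{lem:0.3}. Each carries the zero morphism of $\mathbf{K}_N(R)$ to a zero morphism, so multiplication by $x_i$ is null-homotopic on both $K^\bullet(\textbf{x};X)=K^\bullet(\textbf{x};R)\otimes_R X$ and $\mathrm{Hom}_R(K^\bullet(\textbf{x};R),X)$ (here one uses that $\mathrm{Hom}_R(x_i\cdot\mathrm{id},X)$ is again multiplication by $x_i$). Consequently $x_i$ induces $0$ on every $\mathrm{H}^j_t$ of these complexes, giving $(\textbf{x})\,\mathrm{H}_t(\textbf{x};X)=0=(\textbf{x})\,\mathrm{H}_t(\mathrm{Hom}_R(K^\bullet(\textbf{x};R),X))$ for $t=1,\dots,N-1$. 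The only genuinely technical point is the second step: the $N$-complex homotopy relation sums $N$ terms weighted by iterated differentials rather than the classical $ds+sd$, so the main care goes into tracking which iterated composite absorbs the single non-identity differential $d^{-1}=x$; the rest is formal transport through additive functors on the homotopy category.
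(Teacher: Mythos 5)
Your proof is correct and follows essentially the same route as the paper: both arguments reduce to showing that multiplication by $x_i$ is null-homotopic on the one-variable Koszul $N$-complex $K^\bullet(x_i;R)$ and then transport this through the functors $-\otimes_RX$ and $\mathrm{Hom}_R(-,X)$. The only cosmetic difference is that the paper exhibits the null-homotopy by factoring $x_i\cdot\mathrm{id}$ through the contractible $N$-complex $D^0_N(R)$, whereas you write out the resulting homotopy $s^0=\mathrm{id}$, $s^n=0$ for $n\neq 0$ explicitly and verify the $N$-term homotopy identity; these amount to the same thing.
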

\begin{proof} For each $x_i$, the morphism of $N$-complexes
$K^\bullet(x_i;R)\rightarrow K^\bullet(x_i;R)$ given by multiplication by $x_i$ can be factored as follows:
\begin{center}
$\xymatrix@C=20pt@R=18pt{
   K^\bullet(x_i;R):\ 0\ar[r] & R\ar@{=}[r] \ar[d]^{x_i}& R\ar@{=}[r] \ar[d]^{x_i}&\cdots\ar@{=}[r]& R\ar@{=}[r] \ar[d]^{x_i}& R\ar[r]^{x_i} \ar[d]^{x_i}& R\ar[r] \ar@{=}[d]& 0\\
  D^0_N(R):\ 0\ar[r] & R\ar@{=}[r] \ar@{=}[d]& R\ar@{=}[r] \ar@{=}[d]&\cdots\ar@{=}[r]& R\ar@{=}[r] \ar@{=}[d]& M\ar@{=}[r] \ar@{=}[d]& M\ar[r] \ar[d]^{x_i}& 0\\
K^\bullet(x_i;R):\ 0\ar[r] & R\ar@{=}[r] & R\ar@{=}[r] &\cdots\ar@{=}[r]& R\ar@{=}[r] & R\ar[r]^{x_i}&R\ar[r] & 0}$
\end{center}Thus multiplication by $x_i$ is null-homotopic on $K^\bullet(x_i;R)$, so the same hold for $K^\bullet(\emph{\textbf{x}};R)\otimes_RX$ and $\mathrm{Hom}_R(K^\bullet(\emph{\textbf{x}};R);X)$. Therefore, $x_i\mathrm{H}_t(\emph{\textbf{x}};X)=0=x_i\mathrm{H}_t(\mathrm{Hom}_R(K^\bullet(\emph{\textbf{x}};R);X)$ for each $x_i$, and
hence for $\emph{\textbf{x}}=x_1,... ,x_d$, as desired.
\end{proof}

\begin{lem}\label{lem:6.2}{\it{Let $\textbf{x}=x_1,... ,x_d$ be a sequence of $R$ and $\mathfrak{a}$ the ideal generated by $\textbf{x}$, and let $X$ be an $N$-complex. For $t=1,\cdots,N-1$, one has
\begin{center}$\mathrm{Hom}_R(K^\bullet(\textbf{x};R),X)\cong
\mathrm{R}\Gamma_\mathfrak{a}(\mathrm{Hom}_R(K^\bullet(\textbf{x};R),X))$.\end{center}}}
\end{lem}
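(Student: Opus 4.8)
The plan is to show that $Y:=\mathrm{Hom}_R(K^\bullet(\textbf{x};R),X)$ is cohomologically $\mathfrak{a}$-torsion by computing $\mathrm{R}\Gamma_\mathfrak{a}(Y)$ through the $\check{\mathrm{C}}$ech $N$-complex. Since $R$ is noetherian, the sequence $\textbf{x}$ is weakly proregular, so Theorem \ref{lem:3.6} supplies a functorial quasi-isomorphism $\mathrm{R}\Gamma_\mathfrak{a}(Y)\stackrel{\simeq}\rightarrow\check{C}(\textbf{x};R)\otimes_RY$. Hence it suffices to produce an isomorphism $\check{C}(\textbf{x};R)\otimes_RY\cong Y$ in $\mathbf{D}_N(R)$, and the natural candidate is the augmentation $e\otimes 1_Y:\check{C}(\textbf{x};R)\otimes_RY\to Y$ induced by $e:\check{C}(\textbf{x};R)\to R$. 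So the whole problem reduces to showing that $e\otimes 1_Y$ is a quasi-isomorphism.

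Write $K:=K^\bullet(\textbf{x};R)$. Because $K$ is a bounded $N$-complex of finitely generated free modules, the evaluation morphism $\mathrm{Hom}_R(K,R)\otimes_RX\to\mathrm{Hom}_R(K,X)$ is an isomorphism of $N$-complexes, and Proposition \ref{lem:1.8} gives $\mathrm{Hom}_R(K,R)\cong\Sigma^{-d}K$ in $\mathbf{K}_N(R)$. Combining these, $Y\cong\Sigma^{-d}(K\otimes_RX)$ in $\mathbf{K}_N(R)$, under which $e\otimes 1_Y$ is carried, up to the suspension $\Sigma^{-d}$ and the tensor factor $X$, to $(e\otimes 1_K)\otimes 1_X$. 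Since homotopy equivalences are preserved by $\Sigma^{-d}$ and by $-\otimes_RX$, it is enough to prove that $e\otimes 1_K:\check{C}(\textbf{x};R)\otimes_RK\to K$ is a homotopy equivalence.

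For a single element $x$ I would tensor the exact triangle $\Sigma^{-1}R_x\to\check{C}(x;R)\xrightarrow{e}R\xrightarrow{\iota}R_x$ in $\mathbf{K}_N(R)$ with $K^\bullet(x;R)$ (Lemma \ref{lem:0.3}(3)), obtaining a triangle whose outer terms are $R_x\otimes_RK^\bullet(x;R)=K^\bullet(x;R_x)$. By Definition \ref{lem:1.1} the latter is the mapping cone of the isomorphism $x:D^0_1(R_x)\to D^0_1(R_x)$, hence a zero object of $\mathbf{K}_N(R)$; therefore $e\otimes 1:\check{C}(x;R)\otimes_RK^\bullet(x;R)\to K^\bullet(x;R)$ is an isomorphism in $\mathbf{K}_N(R)$, i.e. a homotopy equivalence. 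For the general sequence one passes to the decompositions $\check{C}(\textbf{x};R)\cong\bigotimes_i\check{C}(x_i;R)$ and $K\cong\bigotimes_iK^\bullet(x_i;R)$ and uses the stability of homotopy equivalences under tensor products, exactly mirroring the localization argument of Lemma \ref{lem:3.7}. Assembling the pieces gives a homotopy equivalence $\check{C}(\textbf{x};R)\otimes_RY\to Y$, whence $\mathrm{R}\Gamma_\mathfrak{a}(Y)\cong Y$.

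The step I expect to be the main obstacle is the evaluation isomorphism $\mathrm{Hom}_R(K,X)\cong\mathrm{Hom}_R(K,R)\otimes_RX$ in the $q$-twisted framework of Definition \ref{lem:0.2}: one must check that the degreewise isomorphism attached to finitely generated free modules is compatible with the $q^n$-twisted differentials, and that it intertwines the two augmentations. This verification is routine but sign-sensitive; once it is in place, everything else collapses, via the triangles above, to the single observation that the Koszul $N$-complex on a unit is contractible.
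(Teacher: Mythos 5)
Your argument is correct in outline but follows a genuinely different route from the paper. The paper's proof never leaves the torsion functor: it writes $\mathrm{R}\Gamma_{\mathfrak a}(\mathrm{Hom}_R(K,X))$ as $\varinjlim\mathrm{RHom}_R(R/\mathfrak a^s,\mathrm{Hom}_R(K,X))$, pulls the limit through $\mathrm{Hom}_R(K,-)$ by Hom--tensor adjunction to land on $\mathrm{Hom}_R(K,\mathrm{R}\Gamma_{\mathfrak a}(X))$, and then removes the inner $\mathrm{R}\Gamma_{\mathfrak a}$ using the (unproved) assertion $K\in\mathbf{D}_N(R)_{\mathfrak a\text{-tor}}$ together with Proposition \ref{lem:5.3}(1). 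You instead make the torsion-ness of $\mathrm{Hom}_R(K,X)$ the conclusion rather than an input: via Theorem \ref{lem:3.6} you reduce to showing $e\otimes 1$ is a quasi-isomorphism on $\mathrm{Hom}_R(K,X)$, and via the self-duality of Proposition \ref{lem:1.8} and the evaluation map you reduce that to the contractibility of $K^\bullet(x;R_x)$, which is elementary. What your approach buys is that the only nontrivial homological input is the Koszul complex on a unit being a zero object; what it costs is an explicit appeal to weak proregularity (you invoke noetherianness, which the lemma as stated does not assume, though the section's standing context and the paper's own reliance on $K\in\mathbf{D}_N(R)_{\mathfrak a\text{-tor}}$ put both proofs in essentially the same boat) and the sign-sensitive evaluation isomorphism.

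Two caveats on that last point. First, the map you flag really does fail as written: with the $q$-twisted differentials of Definition \ref{lem:0.2}, the naive evaluation $\mathrm{Hom}_R(K,R)\otimes_RX\to\mathrm{Hom}_R(K,X)$, $f\otimes x\mapsto(k\mapsto f(k)x)$, is not a morphism of $N$-complexes for $N\geq 3$ (the two sides of the compatibility check differ by $q^{|f|}$), whereas the same formula on $X\otimes_R\mathrm{Hom}_R(K,R)$ is; since the $q$-tensor product is not symmetric for $N\geq 3$, this ordering is not cosmetic and forces you to rearrange the subsequent identification of $e\otimes 1_Y$ --- the cleanest repair is to bypass Proposition \ref{lem:1.8} entirely and note that the cone of $e\otimes 1_Y$ is built from the localizations $Y_{x_i}$, each of which is contractible because multiplication by $x_i$ on $\mathrm{Hom}_R(K,X)$ is simultaneously null-homotopic (Lemma \ref{lem:6.1}) and invertible after localization. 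Second, to conclude that $Y$ is cohomologically $\mathfrak a$-torsion in the sense of Definition \ref{lem:5.1} (rather than merely isomorphic to $\mathrm{R}\Gamma_{\mathfrak a}(Y)$, which is all the lemma asserts) you should either check that your composite isomorphism is $\sigma^{\mathrm R}_Y$ or invoke the idempotence of $\mathrm{R}\Gamma_{\mathfrak a}$; for the statement at hand the bare isomorphism suffices.
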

\begin{proof} By the definition of $\mathfrak{a}$-torsion functor, we have the following isomorphisms \begin{center}$\begin{aligned}\mathrm{R}\Gamma_\mathfrak{a}(\mathrm{Hom}_R(K^\bullet(\emph{\textbf{x}};R),X))
&\cong\underrightarrow{\textrm{lim}}\mathrm{RHom}_R(R/\mathfrak{a}^s,\mathrm{Hom}_R(K^\bullet(\emph{\textbf{x}};R),X))\\
&\cong\underrightarrow{\textrm{lim}}\mathrm{RHom}_R(R/\mathfrak{a}^s\otimes_RK^\bullet(\emph{\textbf{x}};R),X)\\
&\cong\mathrm{Hom}_R(K^\bullet(\emph{\textbf{x}};R),\underrightarrow{\textrm{lim}}\mathrm{RHom}_R(R/\mathfrak{a}^s,X))\\
&\cong\mathrm{Hom}_R(K^\bullet(\emph{\textbf{x}};R),\mathrm{R}\Gamma_\mathfrak{a}(X))\\
&\cong\mathrm{Hom}_R(K^\bullet(\emph{\textbf{x}};R),X),\end{aligned}$\end{center}
where the fifth one is by $K^\bullet(\emph{\textbf{x}};R)\in\mathbf{D}_N(R)_{\mathfrak{a}\textrm{-}\mathrm{tor}}$ and Proposition \ref{lem:5.3}(1).
\end{proof}

\begin{lem}\label{lem:3.00}{\it{Let $x$ be an element in $R$. For $i\in\mathbb{Z}$ and a fixed $t$, one has
\begin{center}$\mathrm{H}^i_t(\mathrm{Hom}_{R}(K(x;R),X))=0\ \textrm{implies}\ \mathrm{H}^i_t(\mathrm{Hom}_R(K(x^s),X))=0\ \textrm{for}\ s>0$.\end{center}}}
\end{lem}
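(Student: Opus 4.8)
The plan is to argue by induction on $s$, the base case $s=1$ being the hypothesis itself. The engine of the induction is a distinguished triangle in $\mathbf{K}_N(R)$ linking the three $N$-complexes $\mathrm{Hom}_R(K^\bullet(x^{s-1};R),X)$, $\mathrm{Hom}_R(K^\bullet(x^s;R),X)$ and $\mathrm{Hom}_R(K^\bullet(x;R),X)$, fed into the long exact cohomology sequence of Proposition \ref{prop:0.1}. The decisive feature I will exploit is that this long exact sequence places all three relevant groups at one and the same bidegree $(i,t)$, so the vanishing can be propagated by a squeeze with no shifting of indices.

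First I would produce the triangle upstairs. By Definition \ref{lem:1.1} each $K^\bullet(x^k;R)$ is the mapping cone $C(x^k\colon D^0_1(R)\to D^0_1(R))$. Factoring multiplication by $x^s$ as the composite $D^0_1(R)\xrightarrow{x}D^0_1(R)\xrightarrow{x^{s-1}}D^0_1(R)$ and invoking the octahedral axiom for this composable pair, the three cones assemble into a triangle
\[ K^\bullet(x;R)\longrightarrow K^\bullet(x^s;R)\longrightarrow K^\bullet(x^{s-1};R)\longrightarrow \Sigma K^\bullet(x;R) \]
in $\mathbf{K}_N(R)$, with $K^\bullet(x^s;R)$ in the middle. (Alternatively one may exhibit the first map explicitly — the identity in the negative degrees and $x^{s-1}$ in degree $0$, in the spirit of the comparison morphisms of Construction \ref{lem:2.1} — and check directly that its cone is $K^\bullet(x^{s-1};R)$.) Applying $\mathrm{Hom}_R(-,X)$, which is a contravariant triangle functor by the cone compatibility $\mathrm{Hom}_R(C(f),Z)\cong\Sigma^{-1}C(\mathrm{Hom}_R(f,Z))$ of Lemma \ref{lem:0.3}(2), reverses the arrows and yields the triangle
\[ M_{s-1}\longrightarrow M_s\longrightarrow M_1\longrightarrow \Sigma M_{s-1},\qquad M_k:=\mathrm{Hom}_R(K^\bullet(x^k;R),X), \]
now with $M_s$ in the middle.

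Next I would feed this triangle into Proposition \ref{prop:0.1}. Realizing it, up to isomorphism, as the degreewise-split short exact sequence of a mapping cone $0\to M_{s-1}\to M_s\to M_1\to 0$ and invoking the long exact sequence, the three middle consecutive terms appear at the same bidegree: there is an exact sequence
\[ \mathrm{H}^i_t(M_{s-1})\longrightarrow \mathrm{H}^i_t(M_s)\longrightarrow \mathrm{H}^i_t(M_1). \]
The right-hand term vanishes by the standing hypothesis $\mathrm{H}^i_t(\mathrm{Hom}_R(K^\bullet(x;R),X))=0$, and the left-hand term vanishes by the inductive hypothesis at $s-1$; hence $\mathrm{H}^i_t(M_s)=0$, completing the induction. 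Since $i$ and $t$ are fixed throughout and the hypothesis is used at every step as the right-hand end, the bookkeeping never leaves the bidegree $(i,t)$.

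The step needing the most care is the construction of the triangle: one must verify that the octahedral (or explicit-cone) construction identifies the third vertex precisely as $K^\bullet(x^{s-1};R)$ and keeps $K^\bullet(x^s;R)$ central, so that after dualizing $M_s$ is genuinely the middle term. Equally one must confirm that the particular shape of the sequence in Proposition \ref{prop:0.1} — which for a triple $X,Y,Z$ reads $\mathrm{H}^n_t(X)\to\mathrm{H}^n_t(Y)\to\mathrm{H}^n_t(Z)$ at a common $(n,t)$ — is exactly what legitimizes the squeeze. Once these two points are secured, the remaining argument is routine, and one can sanity-check it on a single module $M$, where the hypothesis forces $x$ to be injective (resp.\ surjective) on the relevant cohomology and these properties persist for $x^s$.
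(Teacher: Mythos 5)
Your argument is correct and follows essentially the same route as the paper: the paper applies the octahedral axiom to the factorization of $x^2$ through $x$ to get the triangle $K^\bullet(x;R)\to K^\bullet(x^2;R)\to K^\bullet(x;R)$, dualizes with $\mathrm{Hom}_R(-,X)$, reads off the vanishing from the long exact sequence of Proposition \ref{prop:0.1}, and then says ``by repeating this process'' — which is exactly the induction step you make explicit via $x^s=x\cdot x^{s-1}$ and the triangle $K^\bullet(x;R)\to K^\bullet(x^s;R)\to K^\bullet(x^{s-1};R)$. Your observation that the three relevant cohomology groups sit at the same bidegree $(i,t)$ in that long exact sequence is precisely the point that makes the squeeze work, and your bookkeeping of the contravariant dualization is consistent with Lemma \ref{lem:0.3}(2).
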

\begin{proof} By Octahedral axiom, we have a commutative diagram in $\mathbf{K}_N(R)$\begin{center}$\xymatrix@C=20pt@R=20pt{
  0\ar[d]\ar[r]& K(x;R) \ar[d]\ar@{=}[r]& K(x;R)\ar[d]\ar[r]&0\ar[d]\\
   R\ar@{=}[d]\ar[r] & K(x^2;R)\ar[d]\ar[r]& \Sigma R\ar[d]^x\ar[r]^{x^2} &\Sigma R \ar@{=}[d]\\
  R\ar[d]\ar[r] & K(x;R)\ar[d]\ar[r]& \Sigma R\ar[d]\ar[r]^{x} &\Sigma R\ar[d]\\
   0\ar[r]& \Sigma K(x;R) \ar@{=}[r]&\Sigma K(x;R)\ar[r]&0}$\end{center}where all rows and columns are exact triangles in $\mathbf{K}_N(R)$. Applying the functor $\mathrm{RHom}_R(-,X)$ to the second column, one gets an exact triangle \begin{center}$\Sigma^{-1}\mathrm{Hom}_R(K(x;R),X)\rightarrow\mathrm{Hom}_R(K(x;R),X)
   \rightarrow\mathrm{Hom}_R(K(x^2;R),X)\rightarrow \mathrm{Hom}_R(K(x;R),X)$,\end{center} which implies that $\mathrm{H}^i_t(\mathrm{Hom}_R(K(x^2;R),X))=0$ whenever
$\mathrm{H}^i_t(\mathrm{Hom}_{R}(K(x;R),X))=0$. By repeating this process we get the claim.
\end{proof}

For an $N$-complex $X$, set \begin{center}$\sigma_{\leq n}X:\ \cdots\xrightarrow{d^{n-N}}X^{n-N+1}\xrightarrow{d^{N-N+1}}\mathrm{Z}^{n-N+2}_{N-1}(X)
\xrightarrow{d^{n-N+2}}\cdots\xrightarrow{d^{n+1}}\mathrm{Z}^{n}_1(X)\rightarrow0$,\end{center}
\begin{center}$\sigma_{\geq n}X:\ 0\rightarrow\textrm{C}^{n}_{N-1}(X)\xrightarrow{\bar{d}^{n}}\cdots\xrightarrow{\bar{d}^{N+N-3}} \textrm{C}^{n}_1(X)\xrightarrow{\bar{d}^{N+N-2}} X^{n+N-1}\xrightarrow{d^{N+N-1}}\cdots$.\end{center}

The next result shows that Koszul cohomology, RHom cohomology and local cohomology yield the same invariant, which was proved by Foxby and Iyengar \cite{FI} for $N=2$ (see \cite[Theorem 2.1]{FI}).

\begin{thm}\label{lem:6.3}{\it{Let $\mathfrak{a}$ be an ideal of a noetherian ring $R$ and $K$ the Koszul
$N$-complex on a sequence of $n$ generators for $\mathfrak{a}$. For any $X\in\mathbf{D}_N(R)$ and a fixed $t$, one has
\begin{center}$\begin{aligned}\mathrm{inf}\{\ell\in\mathbb{Z}\hspace{0.03cm}|\hspace{0.03cm}\mathrm{H}^\ell_t
(\mathrm{RHom}_R(R/\mathfrak{a},X))\neq0\}
&=\mathrm{inf}\{\ell\in\mathbb{Z}\hspace{0.03cm}|\hspace{0.03cm}\mathrm{H}^\ell_{t,\mathfrak{a}}(X)\neq0\}\\
&=\mathrm{inf}\{\ell\in\mathbb{Z}\hspace{0.03cm}|\hspace{0.03cm}\mathrm{H}^\ell_t
(\mathrm{Hom}_R(K,X))\neq0\}.\end{aligned}$\end{center}}}
\end{thm}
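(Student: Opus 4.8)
Throughout I fix the index $t\in\{1,\dots,N-1\}$ and, for an $N$-complex $W$, abbreviate $\inf_t(W):=\inf\{\ell\in\mathbb{Z}\mid \mathrm{H}^\ell_t(W)\neq 0\}$; thus the three quantities to be compared are $s_1:=\inf_t(\mathrm{RHom}_R(R/\mathfrak a,X))$, $s_2:=\inf_t(\mathrm R\Gamma_{\mathfrak a}(X))$ and $s_3:=\inf_t(\mathrm{Hom}_R(K,X))$. Since $R$ is noetherian, $\mathfrak a$ is weakly proregular, so Theorems \ref{lem:3.6} and \ref{lem:4.3} and Proposition \ref{lem:5.3} are available; moreover $K$ is a bounded $N$-complex of finite free modules, hence semi-projective, so $\mathrm{Hom}_R(K,X)=\mathrm{RHom}_R(K,X)$. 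I first record that all cohomologies of $\mathrm R\Gamma_{\mathfrak a}(X)$ and of $\mathrm{Hom}_R(K,X)$ are $\mathfrak a$-torsion: for $\mathrm{Hom}_R(K,X)$ this is Lemma \ref{lem:6.1}, and for $\mathrm R\Gamma_{\mathfrak a}(X)\cong\varinjlim_s\mathrm{RHom}_R(R/\mathfrak a^s,X)$ each $\mathrm H^\ell_{t'}(\mathrm{RHom}_R(R/\mathfrak a^s,X))$ is annihilated by $\mathfrak a^s$, so the filtered colimit is $\mathfrak a$-torsion.

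The plan is to prove $s_1=s_2$ and $s_1=s_3$ by reducing both to a single \emph{torsion detection principle}: if $Y$ is an $N$-complex all of whose cohomology modules are $\mathfrak a$-torsion, then $\inf_t(\mathrm{RHom}_R(R/\mathfrak a,Y))=\inf_t(Y)=:m$. For ``$\geq m$'' one uses that $\mathrm{RHom}_R(R/\mathfrak a,-)$ carries no cohomology below the bottom of its argument, which follows by truncating $Y$ with the functors $\sigma_{\le n},\sigma_{\ge n}$. For ``$\leq m$'' one isolates the bottom cohomology by a good-truncation triangle and applies $\mathrm{RHom}_R(R/\mathfrak a,-)$; reading the long exact sequence of Proposition \ref{prop:0.1} should give $\mathrm H^m_t(\mathrm{RHom}_R(R/\mathfrak a,Y))\cong\mathrm{Hom}_R(R/\mathfrak a,\mathrm H^m_t(Y))$, which is nonzero because a nonzero $\mathfrak a$-torsion module always contains an element annihilated by $\mathfrak a$ (choose a minimal power of $\mathfrak a$ killing a nonzero element). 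Granting this, $s_1=s_2$ follows at once: as $R/\mathfrak a\in\mathbf D_N(R)_{\mathfrak a\textrm{-tor}}$, Proposition \ref{lem:5.3}(1) forces $\sigma^\mathrm R_X$ to induce an isomorphism $\mathrm{RHom}_R(R/\mathfrak a,\mathrm R\Gamma_{\mathfrak a}(X))\simeq\mathrm{RHom}_R(R/\mathfrak a,X)$, so $s_1=\inf_t(\mathrm{RHom}_R(R/\mathfrak a,\mathrm R\Gamma_{\mathfrak a}(X)))$, and the principle applied to $Y=\mathrm R\Gamma_{\mathfrak a}(X)$ gives this $=\inf_t(\mathrm R\Gamma_{\mathfrak a}(X))=s_2$.

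For $s_1=s_3$ I apply the principle to $Y=\mathrm{Hom}_R(K,X)$, obtaining $\inf_t(\mathrm{RHom}_R(R/\mathfrak a,\mathrm{Hom}_R(K,X)))=s_3$, and then evaluate the same object by adjunction. By Remark \ref{lem:0.4}(2) and the flatness of $K$, $\mathrm{RHom}_R(R/\mathfrak a,\mathrm{Hom}_R(K,X))\cong\mathrm{RHom}_R(K\otimes_R R/\mathfrak a,X)=\mathrm{RHom}_R(K^\bullet(\mathbf x;R/\mathfrak a),X)$. Since every $x_i$ acts as $0$ on $R/\mathfrak a$, Example \ref{lem:1.4}(3) shows that $K^\bullet(\mathbf x;R/\mathfrak a)$ has cohomology free over $R/\mathfrak a$, concentrated in degrees $\leq 0$ with top cohomology $R/\mathfrak a$. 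A truncation argument then identifies the bottom: the top piece contributes $\mathrm{RHom}_R(R/\mathfrak a,X)$ starting in degree $s_1$, while each lower cohomology piece (a sum of copies of $R/\mathfrak a$) contributes to $\mathrm{RHom}$ only in strictly higher degrees, so $\inf_t(\mathrm{RHom}_R(K^\bullet(\mathbf x;R/\mathfrak a),X))=\inf_t(\mathrm{RHom}_R(R/\mathfrak a,X))=s_1$. Hence $s_3=s_1$, and combining the two reductions gives $s_1=s_2=s_3$.

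The main obstacle is the torsion detection principle in the genuinely $N$-complex range $N\geq 3$. For $N=2$ the bottom cohomology sits as a module in a single spot and the identity $\mathrm H^m(\mathrm{RHom}_R(R/\mathfrak a,Y))\cong\mathrm{Hom}_R(R/\mathfrak a,\mathrm H^m(Y))$ is routine; but here $\mathrm H^\ell_t$ carries the extra index $t$, the bottom degree $m=\inf_t(Y)$ genuinely depends on $t$, and the long exact sequence of Proposition \ref{prop:0.1} couples $\mathrm H^\bullet_t$ with $\mathrm H^\bullet_{N-t}$, so the truncations $\sigma_{\le n},\sigma_{\ge n}$ do not respect a single value of $t$. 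The delicate work is therefore to extract the bottom $\mathrm H^m_t$ for one fixed $t$ while controlling the $t\leftrightarrow N-t$ interaction, so that the key isomorphism survives (this same bookkeeping underlies the $K^\bullet(\mathbf x;R/\mathfrak a)$ computation above). As a reassuring cross-check on $s_2=s_3$, the limit description $\mathrm H^\ell_{t,\mathfrak a}(X)\cong\varinjlim_s\mathrm H^\ell_t(\mathrm{Hom}_R(K^\bullet(\mathbf x^s;R),X))$ from Theorem \ref{lem:3.6}, together with the per-$t$ propagation of vanishing in Lemma \ref{lem:3.00}, yields the inequality $s_2\geq s_3$ directly and for each fixed $t$.
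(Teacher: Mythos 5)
Your overall architecture is different from the paper's, and the difference matters: you funnel both equalities through a single ``torsion detection principle'' ($\inf_t(\mathrm{RHom}_R(R/\mathfrak a,Y))=\inf_t(Y)$ whenever the cohomology of $Y$ is $\mathfrak a$-torsion), whereas the paper never identifies the bottom cohomology at all. The paper proves each equality as two separate inequalities: for $a=b$ it establishes the vanishing $(\dag)$ for every $R/\mathfrak a$-module $T$ (via a nonpositively graded semi-projective resolution over $R/\mathfrak a$ and the truncation $\sigma_{\geq a}$), then runs an induction on $s$ through the triangles $\mathfrak a^s/\mathfrak a^{s+1}\to R/\mathfrak a^{s+1}\to R/\mathfrak a^s$ and passes to the colimit computing $\mathrm R\Gamma_{\mathfrak a}$, and gets the reverse inequality from the direct system of monomorphisms $\mathrm{Hom}_R(R/\mathfrak a^s,I)$; for $b=c$ it uses Lemma \ref{lem:6.2} in one direction and Lemma \ref{lem:3.00} together with $\mathrm R\Gamma_{\mathfrak a}(X)\cong\varinjlim_s\mathrm{Hom}_R(K^\bullet(\mathbf x^s;R),X)$ in the other. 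Your closing cross-check ($s_2\geq s_3$ via Lemma \ref{lem:3.00}) is in fact exactly half of the paper's argument for $b=c$.

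The problem is that your central lemma is not proved, and it is precisely where the $N$-complex difficulty lives. You write that the long exact sequence of Proposition \ref{prop:0.1} ``should give'' $\mathrm H^m_t(\mathrm{RHom}_R(R/\mathfrak a,Y))\cong\mathrm{Hom}_R(R/\mathfrak a,\mathrm H^m_t(Y))$ and then concede that extracting the bottom $\mathrm H^m_t$ for one fixed $t$, while the exact sequence couples $\mathrm H^\bullet_t$ with $\mathrm H^\bullet_{N-t}$ and the truncations $\sigma_{\leq n},\sigma_{\geq n}$ do not respect a single $t$, is ``the delicate work.'' That delicate work is the proof; without it neither $s_1=s_2$ nor $s_1=s_3$ is established, since the same unproved bookkeeping also underlies your claim that the sub-top cohomology pieces of $K^\bullet(\mathbf x;R/\mathfrak a)$ contribute to $\mathrm{RHom}_R(-,X)$ only in degrees strictly above $s_1$ (recall that for $N$-complexes the suspension shifts $\mathrm H^n_t$ to $\mathrm H^{n-?}_{N-t}$ with an offset depending on $t$, so ``strictly higher'' must be verified for $t$ and $N-t$ simultaneously). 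Two smaller issues: Proposition \ref{lem:5.3}(1) is an isomorphism of Hom-sets in $\mathbf D_N(R)$, so upgrading it to an isomorphism $\mathrm{RHom}_R(R/\mathfrak a,\mathrm R\Gamma_{\mathfrak a}(X))\simeq\mathrm{RHom}_R(R/\mathfrak a,X)$ of objects of $\mathbf D_N(\mathbb Z)$ needs an argument (the paper's Lemma \ref{lem:6.2} does the analogous upgrade for $K$ by an explicit colimit--adjunction computation, which you could imitate); and your principle must also handle the case $\inf_t(Y)=-\infty$, i.e.\ show that $\mathrm{RHom}_R(R/\mathfrak a,Y)$ then also has cohomology in arbitrarily low degrees. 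The paper's two-inequality strategy is worth adopting precisely because it sidesteps the bottom-degree identification that your route requires.
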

\begin{proof} Denote the three numbers in question $a,b,c$, respectively.

For an $R/\mathfrak{a}$-module $T$, one can set $P\stackrel{\nu}\rightarrow T$ be a semi-projective resolution such that $P^i=0$ for all $i>0$ by \cite[Proposition 3.4]{YW}. Hence
\begin{center}$\begin{aligned}\mathrm{RHom}_R(T,X)
&\cong\mathrm{Hom}_R(P,X)\\
&\cong\mathrm{Hom}_{R/\mathfrak{a}}(P,\mathrm{RHom}_R(R/\mathfrak{a},X))\\
&\cong\mathrm{Hom}_{R/\mathfrak{a}}(P,\sigma_{\geq a}(\mathrm{RHom}_R(R/\mathfrak{a},X))),\end{aligned}$\end{center}
where the last isomorphism is by the dual of \cite[Lemma 3.9]{IKM}. For $n<a$ and $i\in\mathbb{Z}$, one of the inequalities $i>0$ or $n+i<a$ holds, so $\mathrm{Hom}_R(P,X)^n=
\prod_{i\in\mathbb{Z}}\mathrm{Hom}_R(P^i,X^{n+i})=0$. So
\begin{align}
\mathrm{H}^\ell_t(\mathrm{RHom}_R(T,X))=0\ \textrm{for}\ \ell<a.
\label{exact03}
\tag{\dag}\end{align}
Apply the functor $\mathrm{RHom}_R(-,X)$ to the exact triangle
$\mathfrak{a}^s/\mathfrak{a}^{s+1}\rightarrow R/\mathfrak{a}^{s+1}\rightarrow R/\mathfrak{a}^{s}\rightarrow\Sigma\mathfrak{a}^s/\mathfrak{a}^{s+1}$ in $\mathbf{D}_N(A)$
yields the long exact sequence
\begin{center}$\cdots\rightarrow\mathrm{H}^\ell_t(\mathrm{RHom}_R(R/\mathfrak{a}^{s},X))\rightarrow \mathrm{H}^\ell_t(\mathrm{RHom}_R(R/\mathfrak{a}^{s+1},X))\rightarrow \mathrm{H}^\ell_t(\mathrm{RHom}_R(\mathfrak{a}^{s}/\mathfrak{a}^{s+1},X))\rightarrow\cdots$.\end{center}
Then $(\dag)$ implies that $\mathrm{H}^\ell_t(\mathrm{RHom}_R(\mathfrak{a}^{s}/\mathfrak{a}^{s+1},X))=0$ for $\ell<a$.
 By the induction hypothesis and
the long exact sequence above, we get that \begin{center}$\mathrm{H}^\ell_t(\mathrm{RHom}_R(R/\mathfrak{a}^{s+1},X))=0$ for $\ell<a$ and $s>0$. \end{center} Hence $\mathrm{H}^\ell_t(\mathrm{R}\Gamma_\mathfrak{a}(X))=0$ for $\ell<a$. On the other hand, let $X\stackrel{\simeq}\rightarrow I$ be a semi-injective resolution. Then the inverse system of epimorphisms $\cdots\twoheadrightarrow R/\mathfrak{a}^3\twoheadrightarrow R/\mathfrak{a}^2\twoheadrightarrow R/\mathfrak{a}$ induces a direct system of monomorphisms \begin{center}$\mathrm{Hom}_R(R/\mathfrak{a},I)\rightarrowtail\mathrm{Hom}_R(R/\mathfrak{a}^2,I)
\rightarrowtail\mathrm{Hom}_R(R/\mathfrak{a}^3,I)\rightarrowtail\cdots$\end{center}
So $\mathrm{H}^\ell_t(\mathrm{RHom}_R(R/\mathfrak{a},X))=0$ for $\ell<b$. This shows that $a=b$.

By Lemma \ref{lem:6.2} and construction of $K$, one has that
$\mathrm{Hom}_R(K,X)\simeq\mathrm{R}\Gamma_\mathfrak{a}(\mathrm{Hom}_R(K,X))\simeq
\mathrm{Hom}_R(K,\mathrm{R}\Gamma_\mathfrak{a}(X))$.
Hence we get
$\mathrm{H}^\ell_t(\mathrm{Hom}_R(K,X))=0$ for $\ell<b$.
On the other hand, one has that
$\mathrm{H}^\ell_{t,\mathfrak{a}}(X)=0$ for $\ell<c$ by Lemma \ref{lem:3.00}. This shows the equality $b=c$.
\end{proof}

The next result shows that Koszul cohomology, $\otimes$ cohomology and derived completion yield the same invariant, which was proved by Foxby and Iyengar \cite{FI} for $N=2$ (see \cite[Theorem 4.1]{FI}).

\begin{thm}\label{lem:6.6}{\it{Let $\mathfrak{a}$ be an ideal of a noetherian ring $R$ and $K$ the Koszul
$N$-complex on a sequence of $n$ generators for $\mathfrak{a}$. For any $X\in\mathbf{D}_N(R)$ and a fixed $t$, one has
\begin{center}$\begin{aligned}\mathrm{sup}\{\ell\hspace{0.03cm}|\hspace{0.03cm}\mathrm{H}^\ell_t
(R/\mathfrak{a}\otimes^\mathrm{L}_RX)\neq0\}
&=\mathrm{sup}\{\ell\hspace{0.03cm}|\hspace{0.03cm}
\mathrm{H}^\ell_t(\mathrm{L}\Lambda_\mathfrak{a}(X))\neq0\}\\
&=\mathrm{sup}\{\ell\hspace{0.03cm}|\hspace{0.03cm}\mathrm{H}^\ell_t
(K\otimes_RX)\neq0\}.\end{aligned}$\end{center}}}
\end{thm}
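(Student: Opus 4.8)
The plan is to deduce this statement from Theorem \ref{lem:6.3} by Matlis duality, since the three complexes here are the ``duals'' of the three in Theorem \ref{lem:6.3}. Fix a faithful injective $R$-module $E$ and write $(-)^\vee=\mathrm{Hom}_R(-,E)$. Because $E$ is injective, $(-)^\vee$ is exact, and, exactly as in the proof of Theorem \ref{lem:3.6}, for every $N$-complex $Z$ one has $\mathrm{H}^j_t(Z^\vee)\cong\mathrm{Hom}_R(\mathrm{H}^{-j}_{N-t}(Z),E)$. Faithfulness gives $\mathrm{H}^j_t(Z^\vee)\neq0\Leftrightarrow\mathrm{H}^{-j}_{N-t}(Z)\neq0$, whence $\sup\{\ell\mid\mathrm{H}^\ell_t(Z)\neq0\}=-\inf\{\ell\mid\mathrm{H}^\ell_{N-t}(Z^\vee)\neq0\}$. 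Thus it suffices to produce three natural isomorphisms identifying the Matlis duals of $R/\mathfrak{a}\otimes^\mathrm{L}_RX$, $\mathrm{L}\Lambda_\mathfrak{a}(X)$, $K\otimes_RX$ with the three complexes $\mathrm{RHom}_R(R/\mathfrak{a},X^\vee)$, $\mathrm{R}\Gamma_\mathfrak{a}(X^\vee)$, $\mathrm{Hom}_R(K,X^\vee)$ occurring in Theorem \ref{lem:6.3} for $X^\vee$ and the index $N-t$ (recall that over a noetherian ring every ideal is weakly proregular, so Theorem \ref{lem:3.6}, Corollary \ref{lem:4.2} and Theorem \ref{lem:4.3} are available).

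\textbf{The two formal dualities.} First I would check $(R/\mathfrak{a}\otimes^\mathrm{L}_RX)^\vee\cong\mathrm{RHom}_R(R/\mathfrak{a},X^\vee)$ and $(K\otimes_RX)^\vee\cong\mathrm{Hom}_R(K,X^\vee)$. For the first, take a semi-projective resolution $P\to X$; then $R/\mathfrak{a}\otimes^\mathrm{L}_RX\cong R/\mathfrak{a}\otimes_RP$, and Hom--tensor adjunction gives $(R/\mathfrak{a}\otimes_RP)^\vee\cong\mathrm{Hom}_R(R/\mathfrak{a},P^\vee)$, where $P^\vee$ is a semi-injective resolution of $X^\vee$ since $E$ is injective. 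For the second, $K$ is a bounded complex of finitely generated free modules, so adjunction yields $(K\otimes_RX)^\vee\cong\mathrm{Hom}_R(K,X^\vee)$ directly (one may instead use the self-duality of Proposition \ref{lem:1.8}). Together with the cohomology flip above, feeding these two identifications into the equality of the first and third numbers in Theorem \ref{lem:6.3} for $X^\vee$ already gives $\sup\{\ell\mid\mathrm{H}^\ell_t(R/\mathfrak{a}\otimes^\mathrm{L}_RX)\neq0\}=\sup\{\ell\mid\mathrm{H}^\ell_t(K\otimes_RX)\neq0\}$.

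\textbf{The crux: completion.} It remains to establish $(\mathrm{L}\Lambda_\mathfrak{a}(X))^\vee\cong\mathrm{R}\Gamma_\mathfrak{a}(X^\vee)$, which is the Greenlees--May / Matlis duality and is the main obstacle. Using Theorem \ref{lem:4.3} and Corollary \ref{lem:4.2} I would write $\mathrm{L}\Lambda_\mathfrak{a}(X)\cong\mathrm{Hom}_R(\mathrm{Tel}(\mathbf{x};R),X)$ and $\mathrm{R}\Gamma_\mathfrak{a}(X^\vee)\cong\mathrm{Tel}(\mathbf{x};R)\otimes_RX^\vee$, and consider the evaluation morphism $\phi_X\colon\mathrm{Tel}(\mathbf{x};R)\otimes_RX^\vee\to\mathrm{Hom}_R(\mathrm{Hom}_R(\mathrm{Tel}(\mathbf{x};R),X),E)=(\mathrm{L}\Lambda_\mathfrak{a}(X))^\vee$. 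The hard point is that $\mathrm{Tel}(\mathbf{x};R)$ is built from the \emph{infinitely} generated free modules $\bigoplus_{i\geq0}R$, so $\phi_X$ is not an isomorphism of $N$-complexes and one cannot argue degreewise (indeed $\mathrm{Hom}_R(\prod_i X,E)\neq\bigoplus_i X^\vee$ already fails). Instead I would view $\phi$ as a natural transformation between the two contravariant additive functors $\mathrm{Tel}(\mathbf{x};R)\otimes_R(-)^\vee$ and $\mathrm{Hom}_R(\mathrm{Hom}_R(\mathrm{Tel}(\mathbf{x};R),-),E)$ from $\mathrm{Mod}R$ to $\mathbf{C}_N(\mathbb{Z})$, observe that both land in a fixed degree window independent of the module (so condition (1) of Lemma \ref{lem:3.3} holds), and reduce to condition (2): that $\phi_M$ is a quasi-isomorphism for each single module $M$. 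For a module this is precisely the assertion that the local cohomology of $M^\vee$ is Matlis dual to the derived completion of $M$, which one verifies from the explicit single-element telescope computation behind \cite[Lemma 5.7, Theorem 5.21]{PSY} and tensoring over the sequence $\mathbf{x}$. Lemma \ref{lem:3.3} then promotes $\phi_X$ to a quasi-isomorphism for arbitrary $X$; note that although that lemma is stated for covariant functors, the same induction applies here once the short exact sequences $0\to\tau_{>i}(X)\to X\to\tau_{\leq i}(X)\to0$ are read in reverse.

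\textbf{Conclusion.} With the three identifications in hand, Theorem \ref{lem:6.3} applied to the $N$-complex $X^\vee$ and the index $N-t$ asserts that $\inf\{\ell\mid\mathrm{H}^\ell_{N-t}(\mathrm{RHom}_R(R/\mathfrak{a},X^\vee))\neq0\}$, $\inf\{\ell\mid\mathrm{H}^\ell_{N-t}(\mathrm{R}\Gamma_\mathfrak{a}(X^\vee))\neq0\}$ and $\inf\{\ell\mid\mathrm{H}^\ell_{N-t}(\mathrm{Hom}_R(K,X^\vee))\neq0\}$ coincide; transporting each through the cohomology flip and the isomorphisms (a), (b), (c) turns these three equal infima into the three desired suprema for $X$. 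I expect all the genuine work to sit in the previous paragraph; the rest is bookkeeping with the exact functor $(-)^\vee$, and the only subtlety to watch is the failure of $\phi$ to be a chain isomorphism, which is exactly what forces the reduction to modules via Lemma \ref{lem:3.3}.
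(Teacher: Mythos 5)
Your handling of the first and third quantities is fine, and it is in fact exactly what the paper does for that part: since $K$ is a bounded $N$-complex of finitely generated free modules and $P^\vee$ is a semi-injective resolution of $X^\vee$ when $P\to X$ is semi-projective, adjunction gives $(K\otimes_RX)^\vee\cong\mathrm{Hom}_R(K,X^\vee)$ and $(R/\mathfrak{a}\otimes^{\mathrm{L}}_RX)^\vee\cong\mathrm{RHom}_R(R/\mathfrak{a},X^\vee)$, and Theorem \ref{lem:6.3} applied to $X^\vee$ in degree $N-t$ yields the equality of those two suprema. The gap is in the middle quantity. The isomorphism your argument hinges on, $(\mathrm{L}\Lambda_{\mathfrak{a}}(X))^\vee\cong\mathrm{R}\Gamma_{\mathfrak{a}}(X^\vee)$, is false. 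Matlis duality interchanges torsion and completion only in the direction $(\mathrm{R}\Gamma_{\mathfrak{a}}Y)^\vee\simeq\mathrm{L}\Lambda_{\mathfrak{a}}(Y^\vee)$, because $\mathrm{Hom}_R(\varinjlim M_s,E)=\varprojlim\mathrm{Hom}_R(M_s,E)$; the direction you need requires the natural map $\varinjlim\mathrm{Hom}_R(M_s,E)\to\mathrm{Hom}_R(\varprojlim M_s,E)$ to be bijective, and it is not. Concretely, take $R=\mathbb{Z}$, $\mathfrak{a}=(p)$, $X=\mathbb{Z}$, $E=\mathbb{Q}/\mathbb{Z}$. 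Then $\mathrm{L}\Lambda_{(p)}(\mathbb{Z})=\varprojlim_s\mathbb{Z}/p^s\mathbb{Z}$ is concentrated in degree $0$, and $\mathrm{Hom}_{\mathbb{Z}}(\varprojlim_s\mathbb{Z}/p^s\mathbb{Z},\mathbb{Q}/\mathbb{Z})$ contains elements that are not $p$-power torsion (compose a $\mathbb{Q}$-linear functional on $(\varprojlim_s\mathbb{Z}/p^s\mathbb{Z})\otimes\mathbb{Q}$ with $\mathbb{Q}\to\mathbb{Q}/\mathbb{Z}$), whereas $\mathrm{R}\Gamma_{(p)}(\mathbb{Z}^\vee)=\Gamma_{(p)}(\mathbb{Q}/\mathbb{Z})=\varinjlim_s\mathrm{Hom}(\mathbb{Z}/p^s\mathbb{Z},\mathbb{Q}/\mathbb{Z})$ is entirely $p$-power torsion. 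So your evaluation morphism $\phi_M$ fails to be a quasi-isomorphism already for a single module and a single element; condition (2) of Lemma \ref{lem:3.3} is violated, and the reduction cannot be repaired. The asymmetry you noticed, $\mathrm{Hom}_R(\prod_iX,E)\neq\bigoplus_iX^\vee$, is not a technical obstacle to be argued around -- it is precisely why the statement is false.

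The equality involving $\mathrm{L}\Lambda_{\mathfrak{a}}$ therefore needs a direct argument, and this is how the paper proceeds: it never dualizes the completion. Writing $\mathrm{L}\Lambda_{\mathfrak{a}}(X)$ via the inverse system of epimorphisms $R/\mathfrak{a}^{s}\otimes_RP$ for a semi-projective resolution $P$ gives $\sup\{\ell\mid\mathrm{H}^\ell_t(R/\mathfrak{a}\otimes^{\mathrm{L}}_RX)\neq0\}\leq\sup\{\ell\mid\mathrm{H}^\ell_t(\mathrm{L}\Lambda_{\mathfrak{a}}(X))\neq0\}$; for the reverse inequality one first shows, as in the proof of Theorem \ref{lem:6.3}, that $\mathrm{H}^\ell_t(T\otimes^{\mathrm{L}}_RX)=0$ for every $R/\mathfrak{a}$-module $T$ and every $\ell$ above the first supremum, and then runs an induction on $s$ over the triangles $\mathfrak{a}^{s}/\mathfrak{a}^{s+1}\to R/\mathfrak{a}^{s+1}\to R/\mathfrak{a}^{s}$ to get the same vanishing for all $R/\mathfrak{a}^{s}\otimes^{\mathrm{L}}_RX$ simultaneously. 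You should replace your Greenlees--May step with an argument of this kind; the rest of your proposal (the treatment of the first and third numbers, and the observation that faithfulness of $E$ converts $\sup$ into $-\inf$ of the dual) can be kept.
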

\begin{proof} Denote the three numbers in question $a,b,c$, respectively.

Let $P\stackrel{\simeq}\rightarrow X$ be a semi-projective resolution. Then the inverse system of epimorphisms $\cdots\twoheadrightarrow R/\mathfrak{a}^3\twoheadrightarrow R/\mathfrak{a}^2\twoheadrightarrow R/\mathfrak{a}$ induces an inverse system of epimorphisms \begin{center}$\cdots\twoheadrightarrow R/\mathfrak{a}^3\otimes_RP\twoheadrightarrow R/\mathfrak{a}^2\otimes_RP\twoheadrightarrow R/\mathfrak{a}\otimes_RP$.\end{center}
So $\mathrm{H}^\ell_t(R/\mathfrak{a}\otimes^\mathrm{L}_RX)=0$ for $\ell>b$. To the opposite inequality, note that
$T\otimes^\mathrm{L}_RX\cong T\otimes^\mathrm{L}_{R/\mathfrak{a}}R/\mathfrak{a}\otimes^\mathrm{L}_RX$ for any $R/\mathfrak{a}$-module $T$. By analogy with the proof of Theorem \ref{lem:6.3}, one has
\begin{align}
\mathrm{H}^\ell_t
(T\otimes^\mathrm{L}_RX)=0\ \textrm{for}\ \ell>a.
\label{exact03}
\tag{\ddag}\end{align}
Apply $-\otimes^\mathrm{L}_RX$ to the exact triangle $\mathfrak{a}^s/\mathfrak{a}^{s+1}\rightarrow R/\mathfrak{a}^{s+1}\rightarrow R/\mathfrak{a}^{s}\rightarrow\Sigma\mathfrak{a}^s/\mathfrak{a}^{s+1}$ in $\mathbf{D}_N(R)$
yields the following long exact sequence
\begin{center}$\cdots\rightarrow\mathrm{H}^\ell_t(\mathfrak{a}^{s}/\mathfrak{a}^{s+1}\otimes^\mathrm{L}_RX)
\rightarrow\mathrm{H}^\ell_t(R/\mathfrak{a}^{s+1}\otimes^\mathrm{L}_RX)\rightarrow \mathrm{H}^\ell_t(R/\mathfrak{a}^{s}\otimes^\mathrm{L}_RX)\rightarrow\cdots$.\end{center}
Then $(\ddag)$ yields that $\mathrm{H}^\ell_t(\mathfrak{a}^{s}/\mathfrak{a}^{s+1}\otimes^\mathrm{L}_RX)=0$ for $\ell>a$.
 By the induction hypothesis and
the long exact sequence above, we get that \begin{center}$\mathrm{H}^\ell_t(R/\mathfrak{a}^{s+1}\otimes^\mathrm{L}_RX)=0\ \textrm{for}\ \ell>a\ \textrm{and}\ s>0$. \end{center} This implies that $a=b$.

Let $E$ be a faithful injective $R$-module. We have the following equivalences
\begin{center}$\begin{aligned}\mathrm{H}^\ell_t(K\otimes_RX)=0\ \textrm{for}\ \ell>a
&\Longleftrightarrow\mathrm{H}^{-\ell}_{N-t}(\mathrm{Hom}_R(K,\mathrm{Hom}_R(X,E)))=0\ \textrm{for}\ \ell>a\\
&\Longleftrightarrow\mathrm{H}^{-\ell}_{N-t}(\mathrm{RHom}_R(R/\mathfrak{a},\mathrm{Hom}_R(X,E)))=0\ \textrm{for}\ \ell>a\\
&\Longleftrightarrow\mathrm{H}^\ell_t(R/\mathfrak{a}\otimes_RX)=0\ \textrm{for}\ \ell>a,\end{aligned}$\end{center}where the third equivalence is by
Theorem \ref{lem:6.3}. This shows the equality $a=c$.
\end{proof}

\bigskip \centerline {\bf ACKNOWLEDGEMENTS}
\bigskip This research was partially supported by National Natural Science Foundation of China (11761060).

\bigskip


\begin{thebibliography}{99}
\bibitem{AJL}L. Alonso, A. Jeremias and J. Lipman, Local homology and cohomology on schemes.
\emph{Ann. Sci. Ec. Norm. Super.} 30 (1997) 1--39. Correction, available online at http://www.
math.purdue.edu/¡«lipman/papers/homologyfix.pdf
\bibitem{AM}M.F. Atiyah and I.G. Macdonald, \emph{Introduction to commutative algebra}, AddisonWesley Publishing Co., 1969.
\bibitem{BHN}P. Bahiraei, R. Hafezi, A. Nematbakhsh, Homotopy category of $N$-complexes of projective modules, \emph{J. Pure Appl. Algebra} \textbf{220} (2016) 2414--2433.
\bibitem{DV}M. Dubois-Violette, $d^N=0$: generalized homology, \emph{K-Theory} \textbf{14} (1998) 371--404.
\bibitem{DG} W.G. Dwyer and J.P.C. Greenless, Complete modules and torsion modules. \emph{Am. J. Math.} \textbf{124} (2002) 199--220.
\bibitem{E}S. Estrada, Monomial algebras over infinite quivers. Applications to $N$-complexes of modules, \emph{Comm. Algebra} \textbf{35} (2007) 3214--3225.
\bibitem{FI}H.-B. Foxby and S. Iyengar, Depth and amplitude for unbounded complexes. \emph{Contemp. Math.} \textbf{331} (2003) 119--137.
\bibitem{G12} J. Gillespie, The homotopy category of $N$-complexes is a homotopy category, {\it{J. Homotopy Relat. Struct.}} (2013) 1--12.
\bibitem{GH}J. Gillespie and M. Hovey, Gorenstein model structures and generalized derived
categories, \emph{Proc. Edinb. Math. Soc.} \textbf{53} (2010) 675--696.
\bibitem{GM}J.P.C. Greenlees and J.P. May, Derived functors of I-adic completion and local homology, \emph{J.
Algebra} \textbf{149} (1992) 438--453.
\bibitem{H}R. Hartshorne, \emph{Residues and duality}, In: Lecture Notes in Math. \textbf{20}. Springer, Berlin 1966.
\bibitem{IKM}O. Iyama, K. Kato, J. Miyachi, Derived categories of $N$-complexes, \emph{J. London Math. Soc.} \textbf{96} (2017) 687--716.
\bibitem{ILMS}S. Iyengar, G.J. Leuschke, A. Leykin, C. Miller, E. Miller, A.K. Singh and U. Walther, \emph{Twenty-Four Hours of
Local Cohomology}, AMS Graduate Studies in Mathematics 87, 2007.
\bibitem{K} M.M. Kapranov, On the $q$-analog of homological algebra, Preprint 1996, arXiv:q-alg/9611005.
\bibitem{M}W. Mayer, `A new homology theory I, II', \emph{Ann. of Math.} \textbf{43} (1942).
\bibitem{Mi}D. Mirmohades, $N$-Complexes, Uppsala University, Project Report 2010.
\bibitem{PSY}M. Porta, L. Shaul, and A. Yekutieli, On the homology of completion and torsion, \emph{Algebras and Representation Theory} \textbf{17} (2014) 31--67.
\bibitem{S}P. Schenzel, Proregular sequences, local cohomology, and completion, \emph{Math. Scand} \textbf{92} (2003) 161--180.
\bibitem{T}A. Tikaradze, Homological constructions on $N$-complexes, \emph{J. Pure Appl. Algebra} \textbf{176} (2002) 213--222.
\bibitem{YD} X.Y. Yang and N.Q. Ding, The homotopy category and derived category of $N$-complexes, \emph{J.
Algebra} \textbf{426} (2015) 430--476.
\bibitem{YW} X.Y. Yang and J.P. Wang, The existence of homotopy resolutions of $N$-complexes, \emph{Homology, Homotopy Appl.} \textbf{17} (2015) 291--316.
\end{thebibliography}
\end{document}